\theoremstyle{plain}
\newtheorem{theorem}{Theorem}[section]
\newtheorem{lemma}[theorem]{Lemma}
\newtheorem{corollary}[theorem]{Corollary}
\theoremstyle{definition}
\newtheorem{definition}[theorem]{Definition}
\newtheorem{conjecture}[theorem]{Conjecture}
\theoremstyle{remark}
\newtheorem{remark}[theorem]{Remark}
\newtheorem{example}[theorem]{Example}
\begin{document}

\title[]{Schur positivity of difference of products of derived Schur polynomials}
\author{Julius Ross}
\address{Department of Mathematics, Statistics, and Computer Science, University of Illinois at Chicago, Chicago, IL 60607}
\email{juliusro@uic.edu}
\author{Kuang-Yu Wu}
\address{Department of Mathematics, Statistics, and Computer Science, University of Illinois at Chicago, Chicago, IL 60607}
\email{kwu33@uic.edu}

\begin{abstract}
To any Schur polynomial $s_{\lambda}$ one can associated its derived polynomials $s_{\lambda}{(i)}$ $i=0,\ldots,|\lambda|$ by the rule
$$s_{\lambda}(x_1+t,\ldots,x_n+t) = \sum_i s_{\lambda}^{(i)}(x_1,\ldots,x_n) t^i.$$
We conjecture that $$(s_{\lambda}^{(i)})^2 - s_{\lambda}^{(i-1)} s_{\lambda}^{(i+1)}$$ is always Schur positive and prove this when $i=1$ for rectangles $\lambda = (k^\ell)$, for hooks $\lambda = (k, 1^{\ell -1})$, and when $\lambda = (k,k,1)$ or $\lambda = (3,2^{k-1})$.  
\end{abstract}

\maketitle

\newcommand{\LR}{\operatorname{LR}}

\captionsetup{width=.9\textwidth}

\section{Introduction}

Among the many notions of positivity for symmetric polynomials,
one that has attracted particular interest is Schur positivity
\cite{BO14,BBR06,BM04,CWY10,FFLP05,GXY19,KWW08,K04,LLT97,M08,MvW09,MW12}
that demands that a polynomial be written as a non-negative sum of Schur polynomials.
In this paper we explore a notion of Schur-positive log-concavity, which results in new classes of polynomials that we conjecture to be Schur positive.

To describe this, for any partition $\lambda$,
we let $s_{\lambda} (x_1 , \ldots , x_n)$ denote the associated Schur polynomial.
Following \cite{RT23a},
we define the \emph{derived Schur polynomials}
$s_{\lambda}^{(i)}(x_1 , \ldots , x_n)$
for $i=0,\ldots,|\lambda|$ to be the symmetric polynomials required to satisfy
\[
    s_{\lambda}(x_1 + t , \ldots , x_n + t)
    =
    \sum_{i = 0}^{|\lambda|} t^i s_{\lambda}^{(i)}(x_1 , \ldots , x_n)\quad \text{ for all } t .
\]
All derived Schur polynomials are Schur positive by \cite[Theorem 1.5]{CK20} or \cite[Remark 5.3]{RT23b},
and hence products of derived Schur polynomials are also Schur positive.
In this paper, we consider the following:

\begin{conjecture} \label{conjecture:1}
    Let $\lambda$ be a partition.
    Then
    \[
        (s_{\lambda}^{(i)})^2 - s_{\lambda}^{(i - 1)}s_{\lambda}^{(i + 1)}
    \]
    is Schur positive for all $i \geq 1$.
\end{conjecture}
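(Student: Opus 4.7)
The plan is to combine an explicit Schur expansion of the derived polynomials with the Littlewood--Richardson rule, then to look for a global uncrossing-type injection. Since $s_\lambda^{(i)}$ is Schur positive by \cite{CK20,RT23b}, I would first fix combinatorial coefficients $N^\lambda_{\mu,i}\in \mathbb Z_{\geq 0}$ with
\[
s_\lambda^{(i)} = \sum_{|\mu|=|\lambda|-i}N^\lambda_{\mu,i}\,s_\mu,
\]
chosen explicit enough to permit a direct comparison of the two bilinears in the conjecture. Expanding $(s_\lambda^{(i)})^2 - s_\lambda^{(i-1)}s_\lambda^{(i+1)}$ via the Littlewood--Richardson rule then reduces the problem to showing
\[
\Delta^\lambda_{\pi,i} \;=\; \sum_{\mu,\nu}c^{\pi}_{\mu\nu}\bigl(N^\lambda_{\mu,i}N^\lambda_{\nu,i} - N^\lambda_{\mu,i-1}N^\lambda_{\nu,i+1}\bigr) \;\geq\; 0
\]
for every partition $\pi$. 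The natural route is then to interpret each side as a count of pairs of tableaux and to build an injection from pairs contributing to $s_\lambda^{(i-1)} s_\lambda^{(i+1)}$ into those contributing to $(s_\lambda^{(i)})^2$, modelled on the Lindström--Gessel--Viennot uncrossing proof of log-concavity for skew Schur functions.

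For the families singled out in the abstract---rectangles $(k^\ell)$, hooks $(k,1^{\ell-1})$, and the sporadic $(k,k,1)$ and $(3,2^{k-1})$---this programme simplifies drastically, since only a handful of $\mu$ contribute to each of $s_\lambda^{(0)}, s_\lambda^{(1)}, s_\lambda^{(2)}$. For rectangles the Jacobi--Trudi determinant is exceptionally symmetric; for hooks $s_\lambda$ decomposes cleanly in terms of elementary and complete symmetric polynomials; and for the sporadic shapes the relevant partitions can be enumerated directly. In each case I expect to verify $\Delta^\lambda_{\pi,1}\geq 0$ either by a bespoke injection between pairs of semistandard tableaux or, failing that, by a short direct calculation.

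The main obstacle is the general conjecture. Pointwise inequalities $N^\lambda_{\mu,i}N^\lambda_{\nu,i}\geq N^\lambda_{\mu,i-1}N^\lambda_{\nu,i+1}$ are quickly seen to fail, so one must genuinely control cross-terms and produce a global matching that respects the Littlewood--Richardson structure. Constructing such a matching uniformly across all shapes $\lambda$, rather than family by family, seems to require a new combinatorial or algebraic input---for instance, a crystal structure on the relevant pairs of tableaux, or a positivity statement for a suitable immanant---and it is presumably for this reason that the paper falls back on a family-by-family analysis even at $i=1$.
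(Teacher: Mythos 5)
You have not given a proof here, and it is worth being clear that the statement itself is a conjecture: the paper does not prove it in general either, only the $i=1$ case for the four families listed in Theorem \ref{theorem:main}. Your text is a research programme --- ``fix coefficients $N^{\lambda}_{\mu,i}$, reduce to $\Delta^{\lambda}_{\pi,i}\geq 0$, hope for an LGV-style uncrossing injection'' --- with no construction carried out and no family actually verified, so as a proof attempt it has a genuine gap everywhere the work would have to happen. In particular you never specify the coefficients $N^{\lambda}_{\mu,i}$, whereas the paper's starting point is the explicit Corteel--Kim formula $s_{\lambda}^{(1)}=\sum_j (n+\lambda_j-j)s_{\lambda_{(j)}}$ and the analogous formula for $s_{\lambda}^{(2)}$; without these explicit nonnegative coefficients the comparison you propose cannot even be set up concretely.

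Beyond incompleteness, your plan misses the structural ideas that make the paper's partial results go through, and it leans on one that cannot work as stated. The paper groups the expansion of $(s_{\lambda}^{(1)})^2-s_{\lambda}s_{\lambda}^{(2)}$ into three classes of differences: $s_{\lambda_{(j)}}^2-s_{\lambda}s_{\lambda_{(j,\uparrow)}}$, $s_{\lambda_{(j)}}^2-s_{\lambda}s_{\lambda_{(j,\leftarrow)}}$, and $2s_{\lambda_{(j)}}s_{\lambda_{(k)}}-s_{\lambda}s_{\lambda_{(j,k)}}$. The first two are Schur positive by the Lam--Postnikov--Pylyavskyy theorems (on $s_{(\lambda+\mu)/2}^2\geq_s s_{\lambda}s_{\mu}$ and $s_{\operatorname{sort}_1}s_{\operatorname{sort}_2}\geq_s s_{\lambda}s_{\mu}$), which immediately settles rectangles; your outline contains no substitute for this input. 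For the remaining (Type I) terms, the single-product difference $s_{\lambda_{(j)}}s_{\lambda_{(k)}}-s_{\lambda}s_{\lambda_{(j,k)}}$ is in fact Schur \emph{negative} by the same LPP results, so the term-by-term injection you envisage between pairs of tableaux contributing to the two bilinears cannot exist; the factor $2$ coming from the cross terms is essential, and the paper exploits it by building maps of Littlewood--Richardson tableaux that are at most $2$-to-$1$, then handling the finitely many exceptional shapes (e.g.\ $(k^2,2^{\ell-2})$, $(2k,k,k)$, $(k^4)$) by direct enumeration against the positive Type II/III contributions. Your instinct that pointwise inequalities fail and that a global matching is needed is correct, but the LGV uncrossing heuristic does not apply to these non-skew products, and nothing in your outline replaces the decomposition plus LPP plus $2$-to-$1$ tableau maps that carry the cases the paper actually proves.
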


This conjecture can be thought of as a strong form of ``log-concavity'' of the function
$i \mapsto s_{\lambda}^{(i)}$.
It is motivated by the following weaker statement
\cite[Corollary 10.12]{RT23b},
which states that if $x_1 , \ldots , x_n$ are non-negative,
then $i \mapsto s_{\lambda}^{(i)} (x_1 , \ldots , x_n)$
is a log concave sequence of real numbers.
Clearly this would be implied by Conjecture \ref{conjecture:1}
since Schur polynomials are monomial positive.

An easy case in which Conjecture \ref{conjecture:1} is clear is where
$\lambda = (1^n) = (\overbrace{1 , \ldots , 1}^{n \text{ copies}})$
and there are exactly $n$ variables.
Since $s_{(1^n)}$ is the $n$-th elementary symmetric polynomial $e_n$,
the derived Schur polynomials $s^{(i)}_{(1^n)}$ are equal to $e_{n - i}$.
Therefore,
\[
    (s_{\lambda}^{(i)})^2 - s_{\lambda}^{(i - 1)} s_{\lambda}^{(i + 1)}
    =
    \begin{vmatrix}
        e_{n-i} & e_{n-i+1} \\
        e_{n-i-1} & e_{n-i}
    \end{vmatrix}
    =
    s_{(n-i,n-i)'}
    =
    s_{(2^{n-i})}
\]
by the Jacobi-Trudi identity.

Our main result is the following.

\begin{theorem}[$=$ Corollary \ref{corollary:rectangle}, Theorem \ref{theorem:hook}, Theorem \ref{theorem:kk1}, Theorem \ref{theorem:conjugate}] \label{theorem:main}
    The polynomial
    \[
        (s_{\lambda}^{(1)})^2 - s_{\lambda}s_{\lambda}^{(2)}
    \]
    is Schur positive if $\lambda$ is a partition of one of the following forms.
    \begin{enumerate}[label = (\alph*)]
        \item Rectangles, i.e. $\lambda = (k^{\ell})$, where $k , \ell$ are positive integers.
        \item Hooks, i.e. $\lambda = (k , 1^{\ell - 1})$, where $k , \ell \geq 2$
        \item $\lambda = (k , k , 1)$, where $k \geq 3$.
        \item $\lambda = (3 , 2^{k - 1})$, where $k \geq 3$.
    \end{enumerate}
\end{theorem}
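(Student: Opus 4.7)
My plan is to handle each of the four families separately, aiming in each case to reduce $(s_\lambda^{(1)})^2 - s_\lambda s_\lambda^{(2)}$ to an explicit Schur expansion whose coefficients can be checked to be non-negative. The main tools I would use are the Jacobi--Trudi identity $s_\lambda = \det(h_{\lambda_i - i + j})$ (and its dual using $e_k$); the shift formulas $h_k(x + t) = \sum_j \binom{n + k - 1}{j} t^j h_{k - j}$ and $e_k(x + t) = \sum_j \binom{n - k + j}{j} t^j e_{k - j}$, derived directly from the generating functions $\prod_i (1 - (x_i + t) y)^{-1}$ and $\prod_i (1 + (x_i + t) y)$; the Littlewood--Richardson rule; and the result from \cite{CK20} which says that $s_\lambda^{(i)}$ is a non-negative combination of $s_\mu$ with $\mu \subseteq \lambda$ and $|\mu| = |\lambda| - i$.

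For rectangles $\lambda = (k^\ell)$, the key simplification is that $(k^\ell)$ has a single removable box, so $s_\lambda^{(1)}$ is forced to be a scalar multiple of $s_{(k^{\ell - 1}, k - 1)}$ and $s_\lambda^{(2)}$ is a non-negative combination of only the two Schur polynomials indexed by $(k^{\ell - 2}, (k-1)^2)$ and $(k^{\ell - 1}, k - 2)$; the scalars come from the Jacobi--Trudi determinant after applying the shift formulas. Since the rectangular case is listed as a corollary, I would first try to prove a more general determinantal positivity lemma --- perhaps an assertion about $2 \times 2$ minors extracted from shifted Jacobi--Trudi matrices --- and then specialize to the rectangle. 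For hooks $\lambda = (k, 1^{\ell - 1})$, the classical expansion $s_{(k, 1^{\ell - 1})} = \sum_{i = 0}^{\ell - 1}(-1)^i h_{k + i} e_{\ell - 1 - i}$ together with the shift formulas reduces the products to Pieri-rule computations, which should make the verification direct.

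For $\lambda = (k, k, 1)$, I would compute $s_\lambda^{(1)}$ and $s_\lambda^{(2)}$ from the $3 \times 3$ Jacobi--Trudi determinant, enumerate the finitely many $\mu \subseteq (k, k, 1)$ of sizes $|\lambda| - 1$ and $|\lambda| - 2$, and work out the coefficient of each Schur function $s_\nu$ in the difference via Littlewood--Richardson. For $\lambda = (3, 2^{k - 1})$, the relevant observation is that $(3, 2^{k - 1})' = (k, k, 1)$; since the involution $\omega$ interchanging $h_k$ and $e_k$ does not commute with the shift $x_i \mapsto x_i + t$ in $n$ variables, there is no formal transfer, but one can run a parallel argument using the dual Jacobi--Trudi $s_\lambda = \det(e_{\lambda_i' - i + j})$ together with the shift formula for $e_k$, mirroring the $(k, k, 1)$ computation step by step. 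The main obstacle in all cases is the final non-negativity check: the general conjecture is open precisely because this is hard, and what makes these four families feasible is the small number of Schur terms appearing in $s_\lambda^{(1)}$ and $s_\lambda^{(2)}$; I expect $(k, k, 1)$ to be the most intricate, since the required LR coefficients there are no longer governed by a single Pieri rule.
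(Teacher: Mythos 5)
There is a genuine gap, and it sits exactly at the point you defer to a ``final non-negativity check.'' For rectangles, your structural observations are right: $s_{(k^\ell)}^{(1)}$ is a scalar multiple of $s_{(k^{\ell-1},k-1)}$ and $s_{(k^\ell)}^{(2)}$ is a non-negative combination of $s_{(k^{\ell-2},(k-1)^2)}$ and $s_{(k^{\ell-1},k-2)}$. But after substituting, Schur positivity of the difference requires (a weighted form of) the two inequalities
\[
s_{(k^{\ell-1},k-1)}^2 \;\geq_s\; s_{(k^{\ell})}\, s_{(k^{\ell-2},(k-1)^2)}
\qquad\text{and}\qquad
s_{(k^{\ell-1},k-1)}^2 \;\geq_s\; s_{(k^{\ell})}\, s_{(k^{\ell-1},k-2)},
\]
and these are not consequences of any $2\times 2$-minor manipulation of shifted Jacobi--Trudi matrices: the products involved are products of full $\ell\times\ell$ determinants, the differences are not single products (already for $\lambda=(2,2)$ one checks $s_{(2,1)}^2-s_{(2,2)}s_{(2)}$ is a sum of five Schur functions, not a Dodgson-condensation term), and the statements are instances of the Fomin--Fulton--Li--Poon/Okounkov-type conjectures proved by Lam--Postnikov--Pylyavskyy. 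The paper's route is exactly to quote that theorem (Theorem \ref{theorem:LPP07}, via Lemma \ref{lemma:type 2&3}); your proposed ``determinantal positivity lemma'' is not formulated and no elementary substitute is known, so part (a) is not actually proved by your sketch.

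For (b) and (c) the plan ``expand everything and check coefficients via Pieri/Littlewood--Richardson'' misses two essential difficulties. First, the check cannot be done term by term: the cross-term differences $2\,s_{\lambda_{(j)}}s_{\lambda_{(k)}}-s_\lambda s_{\lambda_{(j,k)}}$ appearing in the natural decomposition (\ref{equation:main}) are genuinely Schur-\emph{negative} at specific shapes (e.g.\ $\alpha=(k^2,2^{\ell-2})$ for hooks, and $\alpha=(2k,k,k)$, $(k^4)$ for $(k,k,1)$), so one must prove that these negative contributions are compensated by the other terms, whose scalar coefficients are polynomials in the number of variables $n$; positivity must hold for all $n\geq\ell(\lambda)$ simultaneously. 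Second, the set of partitions $\alpha$ occurring in the expansions grows with $k$ and $\ell$, so ``enumerate and check'' is not a proof; one needs a uniform argument, which the paper supplies by constructing injections (hooks, Lemma \ref{lemma:hook type 1}) and at most $2$-to-$1$ maps ($(k,k,1)$, Lemma \ref{lemma:kk1 type 1}) between Littlewood--Richardson tableau sets, plus exact evaluation at the finitely many exceptional $\alpha$ (Lemmas \ref{lemma:hook special}, \ref{lemma:kk1 special}). Your treatment of (d) is workable in spirit, though rather than redoing the dual Jacobi--Trudi computation you can transfer the tableau-counting inequalities directly via $c^{\lambda'}_{\mu'\nu'}=c^{\lambda}_{\mu\nu}$ and only redo the final numerical check with the new coefficients from \cite{CK20}, which is what the paper does; note also that the shift-noninvariance of $\omega$ you point out is precisely why that final check must be repeated. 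In short, the skeleton (use \cite{CK20}, expand, compare LR coefficients) is compatible with the paper, but the two load-bearing ingredients --- the LPP positivity for the ``same-row/same-column'' differences and a uniform mechanism controlling the negative cross-term coefficients --- are absent from your proposal.
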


\subsection{Outline of the proof}

Fix a partition $\lambda$.  Using work of Corteel-Kim \cite{CK20}, the $i$-th derived Schur polynomials $s_{\lambda}^{(i)}$ can be expressed explicitly as a linear combinations of Schur polynomials corresponding to partitions of $|\lambda| - i$ contained in $\lambda$.  For partitions of $|\lambda| - 1$ contained in $\lambda$,
we use $\lambda_{(j)}$ to denote the partition obtained by
removing the last cell from the $j$-th row of the Young diagram of $\lambda$.
For partitions of $|\lambda| - 2$ contained in $\lambda$,
we split them into three different types depending on the configuration of the two cells removed from the Young diagram of $\lambda$.
(See Figure \ref{figure:3 types}.)
\begin{enumerate}[label = (\roman*)]
    \item A type I partition $\lambda_{(j , k)}$ (of $|\lambda| - 2$ contained in $\lambda$) for $j<k$ is obtained by removing two nonadjacent cells from the $j$-th and the $k$-th rows.
    \item A type II partition $\lambda_{(j , \uparrow)}$ is obtained by removing the last cell from the $j$-th row and the cell above it.
    \item A type III partition $\lambda_{(j , \leftarrow)}$ is obtained by removing the last two cells from the $j$-th row.
\end{enumerate}

\begin{figure}
    \captionsetup{width=300pt}
    \caption{Examples of three different types of partitions of $|\lambda| - 2$, where $\lambda = (5 , 3 , 3 , 2)$}
    \label{figure:3 types}
    \begin{subfigure}{0.3\textwidth}
        \caption{$\lambda_{(1 , 4)} = (4 , 3 , 3 , 1)$}
        \vspace{5pt}
        \begin{center} \begin{tikzpicture}[scale=0.4]
            \draw
                (0 , 0) -- (4 , 0)
                (0 , -1) -- (4 , -1)
                (0 , -2) -- (3 , -2)
                (0 , -3) -- (3 , -3)
                (0 , -4) -- (1 , -4)
                (0 , 0) -- (0 , -4)
                (1 , 0) -- (1 , -4)
                (2 , 0) -- (2 , -3)
                (3 , 0) -- (3 , -3)
                (4 , 0) -- (4 , -1)
            ;
            \draw[dotted]
                (4 , 0) -- (5 , 0) -- (5 , -1) -- (4 , -1)
                (2 , -3) -- (2 , -4) -- (1 , -4)
            ;
        \end{tikzpicture} \end{center}
    \end{subfigure}
    \begin{subfigure}{0.3\textwidth}
        \caption{$\lambda_{(3 , \uparrow)} = (5,2,2,2)$}
        \vspace{5pt}
        \begin{center} \begin{tikzpicture}[scale=0.4]
            \draw
                (0 , 0) -- (5 , 0)
                (0 , -1) -- (5 , -1)
                (0 , -2) -- (2 , -2)
                (0 , -3) -- (2 , -3)
                (0 , -4) -- (2 , -4)
                (0 , 0) -- (0 , -4)
                (1 , 0) -- (1 , -4)
                (2 , 0) -- (2 , -4)
                (3 , 0) -- (3 , -1)
                (4 , 0) -- (4 , -1)
                (5 , 0) -- (5 , -1)
            ;
            \draw[dotted]
                (3 , -1) -- (3 , -2) -- (2 , -2)
                (3 , -2) -- (3 , -3) -- (2 , -3)
		;
	\end{tikzpicture} \end{center}
    \end{subfigure}
    \begin{subfigure}{0.3\textwidth}
        \caption{$\lambda_{(1 , \leftarrow)} = (3 , 3 , 3 , 2)$}
        \vspace{5pt}
        \begin{center} \begin{tikzpicture}[scale=0.4]
            \draw
                (0 , 0) -- (3 , 0)
                (0 , -1) -- (3 , -1)
                (0 , -2) -- (3 , -2)
                (0 , -3) -- (3 , -3)
                (0 , -4) -- (2 , -4)
                (0 , 0) -- (0 , -4)
                (1 , 0) -- (1 , -4)
                (2 , 0) -- (2 , -4)
                (3 , 0) -- (3 , -3)
            ;
            \draw[dotted]
                (4 , 0) -- (5 , 0) -- (5 , -1) -- (4 , -1)
                (3 , 0) -- (4 , 0) -- (4 , -1) -- (3 , -1)
            ;
        \end{tikzpicture} \end{center}
    \end{subfigure}
\end{figure}
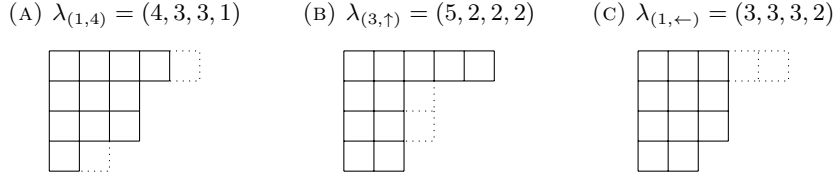

Then, we rewrite $(s_{\lambda}^{(1)})^2 - s_{\lambda}s_{\lambda}^{(2)}$
as a non-negative linear combination of three classes of polynomials 
\begin{align}
2 s_{\lambda_{(j)}}s_{\lambda_{(k)}} - s_{\lambda}s_{\lambda_{(j,k)}}\tag{Type I}\\
s_{\lambda_{(j)}}^2 -s_{\lambda}s_{\lambda_{(j,\uparrow)}}\tag{Type II}\\
s_{\lambda_{(j)}}^2 - s_{\lambda}s_{\lambda_{(j,\leftarrow)}}\tag{Type III}\end{align}
Now Type II and Type III polynomials are Schur positive by \cite{LPP07}, from which part (a) of Theorem \ref{theorem:main} immediately follows
as Type I partitions do not exist for rectangles.  However as Type I partitions are not Schur positive in general, more work must be done for parts (b)-(d).

For partitions in (b) and (c) of Theorem \ref{theorem:main},
we proceed by showing that the coefficient of
$s_{\alpha}$ in $2 \, s_{\lambda_{(j)}}s_{\lambda_{(k)}} - s_{\lambda}s_{\lambda_{(j,k)}}$
is non-negative for most partitions $\alpha$ of $2|\lambda| - 2$
and identify the partitions for which the coefficients are negative. This is done by exhibiting maps between sets of Littlewood-Richardson tableaux that are at most $2$-to-$1$.
For the partitions with negative coefficients,
we compute their coefficients in $(s_{\lambda}^{(1)})^2 - s_{\lambda}s_{\lambda}^{(2)}$ directly
by enumerating Littlewood-Richardson tableaux
and show that they are non-negative.

Finally, part (d) of Theorem \ref{theorem:main} is obtained by taking conjugates of partitions in (c).

\subsection{Discussion}

More generally, for any homogeneous symmetric polynomial $p$ in $n$ variables,
we may define $p^{(i)}$ for $i=0,\ldots, \deg p$ 
to be the symmetric polynomials required to satisfy
\[
    p(x_1 + t , \ldots , x_n + t)
    =
    \sum_{i = 0}^{\deg p} t^i p^{(i)}(x_1 , \ldots , x_n)
    \text{ for all } t.
\]

\begin{example}
The following example shows
that $p$ being Schur positive is not enough to ensure that $(p^{(1)})^2 - p p^{(2)}$ is Schur positive.    Consider $p = s_{(3)} + s_{(1,1,1)}$ with $n = 3$ variables.
    We have 
    $p^{(1)} = 5 s_{(2)} + s_{(1,1,1)}$
    and
    $p^{(2)} = 11 s_{(1)}$.
    Thus, $(p^{(1)})^2 - p p^{(2)}$ equals
    \begin{multline*}
        (25 s_{(4)} + 35 s_{(3,1)} + 26 s_{(2,2)} + 11 s_{(2,1,1)} + s_{(1,1,1,1)}) - 11 (s_{(4)} + s_{(3,1)} + s_{(2,1,1)} + s_{(1,1,1,1)})
        \\
        = 14 s_{(4)} + 24 s_{(3,1)} + 26 s_{(2,2)} - 10 s_{(1,1,1,1)} \, ,
    \end{multline*}
    which is not Schur positive.
    We remark that this example also shows that  the set of $p$ for which $(p^{(1)})^2 - p p^{(2)}$ is Schur positive is not convex.
\end{example}

Nevertheless we expect that Conjecture \ref{conjecture:1} holds even more generally.

\begin{conjecture} \label{conjecture:product}
    Let $\lambda , \mu$ be partitions
    and define $p = s_{\lambda} s_{\mu}$.
    Then
    \[
        (p^{(i)})^2 - p^{(i - 1)}p^{(i + 1)}
    \]
    is Schur positive for all $i \geq 1$.
\end{conjecture}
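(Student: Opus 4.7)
The plan is to reduce Conjecture~\ref{conjecture:product} to Conjecture~\ref{conjecture:1} together with an auxiliary cross log-concavity.  The starting observation is that, since $p(x+t) = s_\lambda(x+t)\, s_\mu(x+t)$, we have $p^{(i)} = \sum_{a + b = i} s_\lambda^{(a)} s_\mu^{(b)}$.  Setting $f_a = s_\lambda^{(a)}$ and $g_b = s_\mu^{(b)}$, the base case $i = 1$ works out cleanly: direct expansion and collection of terms gives
\[
    (p^{(1)})^2 - p \, p^{(2)} \;=\; f_0^2 \bigl( g_1^2 - g_0 g_2 \bigr) + g_0^2 \bigl( f_1^2 - f_0 f_2 \bigr) + f_0 f_1 g_0 g_1 ,
\]
each of whose three summands is Schur positive, assuming Conjecture~\ref{conjecture:1} at $i = 1$ for both $\lambda$ and $\mu$, since products of Schur-positive polynomials are Schur positive.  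Combined with Theorem~\ref{theorem:main}, this already yields Conjecture~\ref{conjecture:product} at $i = 1$ whenever $\lambda$ and $\mu$ both lie in the classes (a)-(d) of Theorem~\ref{theorem:main}.

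For general $i$, the analogous expansion followed by symmetrization over $(a,c) \leftrightarrow (c,a)$ splits $(p^{(i)})^2 - p^{(i-1)} p^{(i+1)}$ into a \emph{diagonal} contribution $\sum_{a} f_a^2 \bigl[ g_{i-a}^2 - g_{i-1-a} g_{i+1-a} \bigr]$, which is Schur positive by Conjecture~\ref{conjecture:1} applied to $\mu$, and an \emph{off-diagonal} contribution $\sum_{a < c} f_a f_c \, M(a,c)$, where, writing $p = i - a > q = i - c$,
\[
    M(a,c) \;=\; 2 g_p g_q - g_{p-1} g_{q+1} - g_{p+1} g_{q-1}
\]
is a ``mixed second difference.''  In contrast to the diagonal case, $M(a,c)$ is not pointwise non-negative even for log-concave positive real sequences $(g_k)$, which forces a cross strengthening of Conjecture~\ref{conjecture:1}: one would need, for every partition $\nu$ and every $p > q \geq 1$, that
\[
    2 s_\nu^{(p)} s_\nu^{(q)} - s_\nu^{(p-1)} s_\nu^{(q+1)} - s_\nu^{(p+1)} s_\nu^{(q-1)}
\]
is Schur positive.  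Granting this, Conjecture~\ref{conjecture:product} then follows term by term.

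An alternative route, perhaps better suited to the Littlewood-Richardson techniques of the present paper, is first to expand $s_\lambda s_\mu = \sum_\nu c_{\lambda\mu}^\nu s_\nu$, so that $p^{(i)} = \sum_\nu c_{\lambda\mu}^\nu s_\nu^{(i)}$.  The analogous symmetrization produces a diagonal sum $\sum_\nu (c_{\lambda\mu}^\nu)^2 \bigl[ (s_\nu^{(i)})^2 - s_\nu^{(i-1)} s_\nu^{(i+1)} \bigr]$, Schur positive by Conjecture~\ref{conjecture:1}, together with an off-diagonal sum whose summands have the form
\[
    c_{\lambda\mu}^{\nu_1} c_{\lambda\mu}^{\nu_2} \bigl[\, 2 s_{\nu_1}^{(i)} s_{\nu_2}^{(i)} - s_{\nu_1}^{(i-1)} s_{\nu_2}^{(i+1)} - s_{\nu_1}^{(i+1)} s_{\nu_2}^{(i-1)} \,\bigr]
\]
indexed by unordered pairs $\{\nu_1, \nu_2\}$ of distinct partitions of size $|\lambda| + |\mu|$.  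This route reduces the problem to a cross-partition version of Conjecture~\ref{conjecture:1}, in which $\nu_1$ and $\nu_2$ are allowed to differ.

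The hard part will be proving either of these cross Schur positivity statements.  The natural strategy, mimicking the proof of Theorem~\ref{theorem:main}, is to use Corteel-Kim to expand each $s_\nu^{(i)}$ (respectively each $s_{\nu_j}^{(i)}$) as a Schur-positive combination of Schur polynomials $s_\alpha$, multiply out via the Littlewood-Richardson rule, and construct at-most-$2$-to-$1$ maps between the relevant sets of Littlewood-Richardson tableaux to show that the net coefficient of each $s_\alpha$ is non-negative, with exceptional shapes handled by direct enumeration.  I expect this to introduce new ``types'' of removed-cell configurations paralleling Types I, II, III of the present paper; classifying these configurations and controlling the associated cancellations is where the bulk of the technical work will lie.
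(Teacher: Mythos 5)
You have not proved the statement, and neither does the paper: Conjecture \ref{conjecture:product} is left open there (it is only verified computationally for $|\lambda|+|\mu|\leq 10$), and your proposal is a conditional reduction rather than a proof. What is correct: your $i=1$ identity
\[
(p^{(1)})^2 - p\,p^{(2)} = \big(f_1^2 - f_0 f_2\big) g_0^2 + f_0 f_1 g_0 g_1 + f_0^2 \big(g_1^2 - g_0 g_2\big)
\]
is exactly the identity in the paper's Remark following the conjecture, and it shows closure under products of the $i=1$ property; unconditionally this only gives Conjecture \ref{conjecture:product} at $i=1$ when both $\lambda$ and $\mu$ lie in the families of Theorem \ref{theorem:main}. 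For general $i$ your symmetrized expansion is correct algebra, but it rests on two inputs you do not supply: Conjecture \ref{conjecture:1} itself (open beyond the cases proved in the paper), and the new mixed positivity $2 s_\nu^{(p)} s_\nu^{(q)} - s_\nu^{(p-1)} s_\nu^{(q+1)} - s_\nu^{(p+1)} s_\nu^{(q-1)} \geq_s 0$, which, as you note, does not follow from log-concavity and which you leave entirely unproven (you also omit the boundary terms with $q=0$, where $s_\nu^{(-1)}=0$). So the hard content of the conjecture is untouched.

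Moreover, your second route reduces to a statement that is false as formulated. The paper's example $p = s_{(3)} + s_{(1,1,1)}$ in three variables already shows this: both diagonal terms $(s_{(3)}^{(1)})^2 - s_{(3)} s_{(3)}^{(2)}$ and $(s_{(1^3)}^{(1)})^2 - s_{(1^3)} s_{(1^3)}^{(2)}$ are Schur positive (rectangles, Corollary \ref{corollary:rectangle}), yet $(p^{(1)})^2 - p\,p^{(2)}$ contains $-10\, s_{(1,1,1,1)}$, so the cross term $2 s_{(3)}^{(1)} s_{(1^3)}^{(1)} - s_{(3)} s_{(1^3)}^{(2)} - s_{(3)}^{(2)} s_{(1^3)}$ is Schur negative. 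Hence a ``cross-partition version of Conjecture \ref{conjecture:1} in which $\nu_1$ and $\nu_2$ are allowed to differ'' cannot hold for arbitrary pairs; it could at best hold for pairs occurring with nonzero coefficient in a common Littlewood--Richardson expansion $s_\lambda s_\mu$ (note $(3)$ and $(1^3)$ never do), and any proof would have to exploit that constraint, which your proposal does not address. In short: the bookkeeping and the reduction strategy are sensible and consistent with the paper's remark, but the key positivity statements your argument needs are open (or false without restriction), so the conjecture remains exactly as open as the paper leaves it.
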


Using SageMath,
we have verified that Conjecture \ref{conjecture:1} is true for all partitions $\lambda$ with $|\lambda| \leq 15$ in the cases of at most $100$ variables,
and that Conjecture \ref{conjecture:product} is true for all pairs of partitions $(\lambda , \mu)$ with $|\lambda| + |\mu| \leq 10$ in the cases of at most $100$ variables.

\begin{remark}
    If $p$ and $q$ are symmetric polynomials such that
    \[
        (p^{(1)})^2 - p p^{(2)} \; , \; (q^{(1)})^2 - q q^{(2)}
    \]
    are both Schur positive, then $pq$ satisfies that
    \[
        ((pq)^{(1)})^2 - (pq) (pq)^{(2)}
    \]
    is Schur positive.  To this observe that since we have
    \begin{align*}
        (pq)^{(1)} &
        = p^{(1)} q + p q^{(1)} \, , \\
        (pq)^{(2)} &
        = p^{(2)} q + p^{(1)} q^{(1)} + p q^{(2)} \, ,
    \end{align*}
    we obtain
    \begin{align*}
        & ((pq)^{(1)})^2 - (pq) (pq)^{(2)} \\
        = \; & \big( (p^{(1)})^2 - p p^{(2)} \big) q^2 + p p^{(1)} q q^{(1)} + p^2 \big( (q^{(1)})^2 - q q^{(2)} \big) \, .
    \end{align*}
    This is Schur positive as derived Schur polynomials are Schur positive and Schur positivity is preserved under taking products.
\end{remark}


\subsection*{Acknowledgments} JR thanks Matt Larson for bringing up this Schur positive question and for other stimulating discussions, as well as Allen Knutson for discussion on this topic.  This material is based upon work supported by the National Science Foundation under Grant No. DMS-1749447.
\section{Preliminaries}

We list some standard definitions and facts regarding Schur polynomials and Littlewood-Richardson coefficients that will be needed in this article.
We refer readers to \cite{Ful97,Sta24} for more details on this subject.

\subsection{Partitions and Schur polynomials}

A partition of a positive integer $d$ is a non-increasing sequence of non-negative integers
$\lambda = (\lambda_1 , \lambda_2 , \lambda_3 , \ldots )$
with $|\lambda| := \sum_{i = 1}^{\infty} \lambda_i = d$.
Each entry $\lambda_i$ of the partition $\lambda$ is called a part of $\lambda$.
The length of a partition $\lambda$,
denoted by $\ell(\lambda)$,
is the number of non-zero parts of $\lambda$.
The zeros are often omitted when writing down a partition,
i.e. if $\ell = \ell(\lambda)$,
we write $\lambda = (\lambda_1 , \ldots , \lambda_{\ell})$.
We use the shorthand $a^f$ for
$a , \ldots , a$ ($f$ times) in a partition. 
For example,
$(5 , 3^2 , 2) = (5 , 3 , 3 , 2)$
is a partition of $13$ with length $4$.

Each partition $\lambda = (\lambda_1 , \ldots , \lambda_{\ell})$
corresponds to a Young diagram,
which is a left-justified array of square cells
with exactly $\lambda_i$ cells in the $i$-th row for $1 \leq i \leq \ell$.
For example,
Figure \ref{figure:Young diagram (5332)} is the Young diagram of $(5 , 3^2 , 2)$.
We use $\lambda'$ to denote the conjugate partition of $\lambda$,
which is defined to be the partition corresponding to the reflection of the Young diagram of $\lambda$ in the diagonal line.

\begin{figure}
    \caption{Young diagram of $(5 , 3^2 , 2)$}
    \label{figure:Young diagram (5332)}
    \vspace{5pt}
    \begin{tikzpicture}[scale=0.4]
        \draw
            (0 , 0) -- (5 , 0)
            (0 , -1) -- (5 , -1)
            (0 , -2) -- (3 , -2)
            (0 , -3) -- (3 , -3)
            (0 , -4) -- (2 , -4)
            (0 , 0) -- (0 , -4)
            (1 , 0) -- (1 , -4)
            (2 , 0) -- (2 , -4)
            (3 , 0) -- (3 , -3)
            (4 , 0) -- (4 , -1)
            (5 , 0) -- (5 , -1)
        ;
    \end{tikzpicture}
\end{figure}
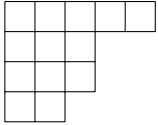

We write $\mu \subseteq \lambda$ for two partitions $\mu , \lambda$
if $\mu_i \leq \lambda_i$ for all $i \geq 1$,
or equivalently,
if the Young diagram of $\mu$ is contained in that of $\lambda$.
In this case,
we define the skew shape $\lambda / \mu$
to be the diagram obtained by
removing the Young diagram of $\mu$ from that of $\lambda$.
For example,
Figure \ref{figure:skew shape (5332)/(331)} is the skew shape $(5 , 3^2 , 2) / (3^2 , 1)$, and going forward we will illustrate skew shapes in this way with white cells and gray shaded region.

\begin{figure}
    \caption{The skew shape $(5 , 3^2 , 2) / (3^2 , 1)$}
    \label{figure:skew shape (5332)/(331)}
    \vspace{5pt}
    \begin{tikzpicture}[scale=0.4]
        \fill[gray] (0 , 0) -- (3 , 0) -- (3 , -2) -- (1 , -2) -- (1 , -3) -- (0 , -3) -- (0 , 0) ;
        \draw
            (3 , 0) -- (5 , 0)
            (3 , -1) -- (5 , -1)
            (1 , -2) -- (3 , -2)
            (0 , -3) -- (3 , -3)
            (0 , -4) -- (2 , -4)
            (0 , -3) -- (0 , -4)
            (1 , -2) -- (1 , -4)
            (2 , -2) -- (2 , -4)
            (3 , -2) -- (3 , -3)
            (3 , 0) -- (3 , -1)
            (4 , 0) -- (4 , -1)
            (5 , 0) -- (5 , -1)
        ;
    \end{tikzpicture}
\end{figure}
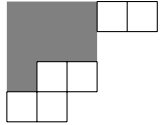

Given a partition $\lambda$ and a positive integer $n \geq \ell(\lambda)$,
we let $s_{\lambda} \in \mathbb{Z} [x_1 , \ldots , x_n]$ be the Schur polynomial of the partition $\lambda$ in $n$ variables.
By the Jacobi-Trudi identities, we have
\[
    s_{\lambda} = \det (h_{\lambda_i + j - i})_{i,j = 1}^{\ell(\lambda)} = \det(e_{\lambda'_i + j - i})_{i,j = 1}^{\ell(\lambda')}\, ,
\]
where $h_k$ is the $k$-th complete homogeneous symmetric polynomial 
and $e_k$ is the $k$-th elementary homogeneous symmetric polynomial.

Each Schur polynomial $s_{\lambda}$ is a homogeneous symmetric polynomial of degree $|\lambda|$.
The vector space $\Lambda_d$ of (rational coefficient)  homogeneous symmetric polynomial of a fixed degree $d$ has a basis given by $\{s_{\lambda}\}_{\lambda : |\lambda| = d}$.
We say that a symmetric polynomial $f \in \Lambda_d$ is Schur positive,
and write $f \geq_s 0$,
if the coefficients in the expression of $f$ as a linear combination of $s_{\lambda}$ are all non-negative.

\subsection{Littlewood-Richardson rule}

Given two partitions $\mu , \nu$,
the product $s_{\mu} s_{\nu}$ of Schur polynomials is a homogeneous symmetric polynomial
and can be written as
\[
    s_{\mu} s_{\nu} = \sum_{\lambda : |\lambda| = |\mu| + |\nu|} c_{\mu \nu}^{\lambda} s_{\lambda} \, .
\]
The integers $c_{\mu \nu}^{\lambda}$ are called the Littlewood-Richardson coefficients.

\begin{definition} \label{definition:Littlewood-Richardson tableaux}
    A Littlewood-Richardson tableau $T$ of shape $\lambda/\mu$ is a filling of boxes of the skew shape $\lambda/\mu$ with positive integers that satisfies the following.
    \begin{enumerate}[label=(\alph*)]
        \item The filling is weakly increasing across each row, and strictly increasing down each column.
        \item The reverse reading word $w = (w_1 , \ldots , w_{|\lambda| - |\mu|})$ of $T$, i.e.\ the sequence of positive integers obtained by reading the entries of $T$ from right to left and top row to bottom row, is a lattice permutation, i.e.\ in every initial part $(w_1 , \ldots , w_j)$ of $w$ the number of any positive integer $i$ is not less than the number of $i + 1$.
    \end{enumerate}
\end{definition}

The content of a Littlewood-Richardson tableau $T$ is defined as a sequence of integer
$\nu = (\nu_i)_{i = 1}^{\infty}$
such that there are exactly $\nu_i$ of $i$'s in $T$.
It follows from condition (b) of Definition \ref{definition:Littlewood-Richardson tableaux} that the content of a Littlewood-Richardson tableau is always a partition.

The Littlewood-Richardson rule states that
the Littlewood-Richardson coefficient $c_{\mu \nu}^{\lambda}$ is equal to
the number of Littlewood-Richardson tableaux of shape $\lambda/\mu$ with content $\nu$.
It follows that all Littlewood-Richardson coefficients $c_{\mu \nu}^{\lambda}$ are non-negative,
and hence products of Schur polynomials are Schur positive.
By the fact that $c_{\mu \nu}^{\lambda} = c_{\nu \mu}^{\lambda}$ and the Littlewood-Richardson rule,
it is also clear that $c_{\mu \nu}^{\lambda} = 0$ if $\mu \not \subseteq \lambda$ or $\nu \not \subseteq \lambda$.

For a skew shape $\lambda / \mu$,
if $\lambda_i > \mu_i$
then we call the $i$-th row of $\lambda / \mu$ a non-empty row.
The following simple criteria for non-existence of Littlewood-Richardson tableaux will be useful.

\begin{lemma} \label{lemma:badLRT}
    Littlewood-Richardson tableaux of shape $\lambda/\mu$ with content $\nu$ do not exist,
    and hence $c^{\lambda}_{\mu\nu} = 0$
    if any of the following statements is true.
    \begin{enumerate}[label=(\alph*)]
        \item There exists a column of $\lambda/\mu$ with more than $\ell(\nu)$ cells.
        \item There are more than $\nu_1$ cells in the first non-empty row of $\lambda/\mu$.
        \item The skew shape $\lambda / \mu$ has fewer than $\ell(\nu)$ non-empty rows.
    \end{enumerate}
\end{lemma}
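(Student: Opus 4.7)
The plan is to derive each non-existence criterion from one of the two defining properties of a Littlewood-Richardson tableau: the strict increase down columns for part (a), and the lattice condition on the reverse reading word for parts (b) and (c). Throughout, the first observation I would use is that since the content $\nu = (\nu_1, \ldots, \nu_{\ell(\nu)})$ is a partition, every entry of $T$ lies in $\{1, 2, \ldots, \ell(\nu)\}$.

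Part (a) is then immediate: a column with more than $\ell(\nu)$ cells would require more than $\ell(\nu)$ distinct positive integer entries, which is impossible. For part (b), I would show that every cell in the first non-empty row must contain the integer $1$, so the row can have at most $\nu_1$ cells. If the rightmost cell of the first non-empty row has value $a$, then it contributes $w_1 = a$ to the reverse reading word; the lattice condition applied to the one-letter prefix $(w_1)$ forces $a = 1$, since otherwise the count of $a$ exceeds the count of $a-1$. The weakly increasing condition across rows then propagates leftward to give $1$ in every cell of the row.

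For part (c), the key is the following strengthening of (b) which I would prove by induction on $k$: in any LR tableau of shape $\lambda/\mu$, every entry in the first $k$ non-empty rows is at most $k$. The base case $k=1$ is exactly the content of part (b). For the inductive step, let $c$ be the rightmost cell of the $k$-th non-empty row, with value $a$. Every letter preceding $c$ in the reverse reading word belongs to one of the first $k-1$ non-empty rows, so by induction those letters all lie in $\{1, \ldots, k-1\}$. Applying the lattice condition to the prefix ending at $c$, the count of $a-1$ must be at least the count of $a$, which is $\geq 1$; this forces $a-1$ to actually occur among the first $k-1$ rows, giving $a - 1 \leq k - 1$ and hence $a \leq k$. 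The weakly increasing condition across the $k$-th row extends the bound to every cell in that row. Given this lemma, if $\lambda/\mu$ has fewer than $\ell(\nu)$ non-empty rows, every entry of $T$ is strictly less than $\ell(\nu)$, contradicting $\nu_{\ell(\nu)} > 0$.

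The argument is essentially bookkeeping; the only step that requires any care is the inductive lemma in the proof of (c), and even there the bookkeeping is driven entirely by the minimality of the rightmost cell in each row's reverse reading contribution.
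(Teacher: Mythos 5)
Your proposal is correct and takes essentially the same route as the paper: part (a) from strict column increase together with the fact that all entries lie in $\{1,\ldots,\ell(\nu)\}$, and parts (b) and (c) from the auxiliary statement that the $k$-th non-empty row contains only entries at most $k$, which the paper proves by a minimal-counterexample argument equivalent to your induction. The only cosmetic difference is that the paper deduces (c) from the resulting bound on the number of cells in the first $k$ non-empty rows, whereas you observe directly that the entry $\ell(\nu)$ could never appear; both deductions are immediate from the same key lemma.
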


\begin{proof}
    Part (a) follows from the requirement for strictly increasing columns.
    Suppose the shape $\lambda/\mu$ has a column with $m$ cells, where $m > \ell(\nu)$.
    Then the integer placed in the last cell of this column must be at least $m$.
    However, we have $\mu_i = 0$ for $i \geq m$ as $m > \ell(\nu)$.
    Thus, Littlewood-Richardson tableaux of shape $\lambda/\mu$ with content $\nu$ do not exist.
    
    Parts (b) and (c) are corollaries of the following more general lemma
    (by taking $k = 1$ and $k$ equals the number of non-empty rows in $\lambda/\mu$, respectively).
\end{proof}

\begin{lemma}
    In a Littlewood-Richardson tableau $T$ of shape $\lambda/\mu$ with content $\nu$,
    for every positive integer $k \leq \ell(\nu)$,
    the $k$-th non-empty row can only contain integers $1 , \ldots , k$.
    In particular, in order for a Littlewood-Richardson tableau of shape $\lambda/\mu$ with content $\nu$ to exist,
    the total number of cells in the first $k$ non-empty rows of $\lambda / \mu$ cannot exceed $\sum_{i = 1}^k \nu_i$
    for every positive integer $k \leq \ell(\nu)$.
\end{lemma}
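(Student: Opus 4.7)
The plan is to prove the first assertion by induction on $k$, and then derive the ``in particular'' clause as an immediate counting consequence.

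For the base case $k = 1$, consider the rightmost cell of the first non-empty row. Its entry $a$ is the very first letter of the reverse reading word, so the length-one prefix contains one copy of $a$ and no copy of $a - 1$. The lattice condition then forces $a = 1$, and since rows are weakly increasing, every cell in the first non-empty row equals $1$.

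For the inductive step, assume the first $k - 1$ non-empty rows contain only entries from $\{1, \ldots, k - 1\}$. Let $a$ be the entry in the rightmost cell of the $k$-th non-empty row; because the row is weakly increasing, it suffices to show $a \leq k$. Consider the prefix $w'$ of the reverse reading word obtained by stopping immediately after this rightmost cell. By the inductive hypothesis, the portion of $w'$ coming from the first $k - 1$ non-empty rows uses only the symbols $1, \ldots, k - 1$, and the one additional cell contributes the single symbol $a$. Iterating the lattice inequality gives
\[
    \#_{1}(w') \geq \#_{2}(w') \geq \cdots \geq \#_{a}(w') \geq 1,
\]
so $w'$ must contain at least one copy of every symbol from $1$ up to $a$. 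If $a \geq k + 1$, then $w'$ must contain a copy of $k$, but by induction the first $k - 1$ non-empty rows contribute none, and the single new cell has value $a \neq k$. This contradiction shows $a \leq k$, completing the induction.

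The ``in particular'' statement is then immediate: the first $k$ non-empty rows of $\lambda / \mu$ are filled using only symbols from $\{1, \ldots, k\}$, and since the tableau has exactly $\nu_i$ copies of the symbol $i$ in total, the total number of cells in these rows is at most $\sum_{i = 1}^{k} \nu_i$. The only subtlety worth attention is the bookkeeping in the inductive step, namely that the prefix $w'$ ends \emph{exactly} at the rightmost cell of the $k$-th non-empty row so that its contents are entirely controlled by the inductive hypothesis plus one extra symbol; once this is set up correctly, the lattice chain argument is straightforward.
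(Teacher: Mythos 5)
Your proof is correct and is essentially the paper's argument: the paper phrases it as a minimal-counterexample argument (take the smallest failing row index and the entry $A$ in its last cell), while you run the same reasoning as an induction on $k$, in both cases using the prefix of the reverse reading word ending at the rightmost cell of that row together with the lattice chain $\#_{k} \geq \cdots \geq \#_{a}$ to force the contradiction. The derivation of the ``in particular'' counting statement is also identical.
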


\begin{proof}
    Suppose the first statement is false.
    Let $k_0$ be the smallest integer such that the statement fails,
    and let $A$ be the integer in the last cell of the $k_0$-th non-empty row of $T$.
    The integer $A$ is the largest in this row,
    so we have $A > k_0$.
    However, the first $(k_0 - 1)$ non-empty rows of $T$ only contain integers $1 , \ldots , k_0 - 1$.
    This implies that $A$ precedes all $k_0$'s in the reverse reading word of $T$,
    which contradicts condition (b) in Definition \ref{definition:Littlewood-Richardson tableaux}.

    Now, since the first $k$ non-empty rows of a Littlewood-Richardson tableau $T$ of shape $\lambda/\mu$ with content $\nu$ can only contain integers $1 , \ldots , k$,
    the total number of cells in the first $k$ non-empty rows cannot exceed the total number of integers $1 , \ldots , k$ in $T$,
    which is exactly $\sum_{i = 1}^k \nu_i$.
\end{proof}
\section{Setup}

Fix a partition $\lambda$.
We first define notations for partitions of $|\lambda| - 1$ and $|\lambda| - 2$ that are contained in $\lambda$.

\begin{definition}
    Let $J(\lambda) := \{ j : \lambda_{j} > \lambda_{j+1} \}$.
    \begin{enumerate}[label = (\alph*)]
        \item
        For each $j \in J(\lambda)$, define $\lambda_{(j)}$ to be the partition corresponding to the Young diagram obtained by removing the last cell of the $j$-th row from the Young diagram of $\lambda$;
        i.e.\ $\lambda_{(j)} = (\lambda_{(j),i})$ is defined by $\lambda_{(j),j} = \lambda_{j} - 1$ and $\lambda_{(j),i} = \lambda_{i}$ for $i \neq j$.
        \item
        For each pair of $j , k \in J(\lambda)$ with $j < k$, define $\lambda_{(j,k)}$ to be the partition corresponding to the Young diagram obtained by removing the last cell of the $j$-th row and the last cell of the $k$-th row from the Young diagram of $\lambda$;
        i.e.\ $\lambda_{(j,k)} = (\lambda_{(j,k),i})$ is defined by $\lambda_{(j,k),j} = \lambda_{j} - 1$, $\lambda_{(j,k),k} = \lambda_{k} - 1$ and $\lambda_{(j,k),i} = \lambda_{i}$ for $i$ not equal to $j$ or $k$.
        
        \item
        Given $j \in J(\lambda)$ with $\lambda_{j-1} = \lambda_{j}$, define $\lambda_{(j,\uparrow)}$ to be the partition corresponding to the Young diagram obtained by removing the last cell of the $j$-th row and the cell directly above it from the Young diagram of $\lambda$;
        i.e.\ $\lambda_{(j,\uparrow)} = (\lambda_{(j,\uparrow),i})$ is defined by $\lambda_{(j,\uparrow),j} = \lambda_{(j,\uparrow),j-1} = \lambda_{j} - 1$ and $\lambda_{(j,\uparrow),i} = \lambda_{i}$ for $i$ not equal to $j$ or $j-1$.
        
        \item
        Given $j \in J(\lambda)$ with $\lambda_{j} - \lambda_{j+1} > 1$, define $\lambda_{(j,\leftarrow)}$ to be the partition corresponding to the Young diagram obtained by removing the last two cells of the $j$-th row from the Young diagram of $\lambda$;
        i.e.\ $\lambda_{(j,\leftarrow)} = (\lambda_{(j,\leftarrow),i})$ is defined by $\lambda_{(j,\leftarrow),j} = \lambda_{j} - 2$ and $\lambda_{(j,\leftarrow),i} = \lambda_{i}$ for $i \neq j$.
        
        \item
        For convenience, we also define $s_{\lambda_{(j,\uparrow)}} := 0$ if $\lambda_{j-1} > \lambda_{j}$ (i.e.\ if $\lambda_{(j,\uparrow)}$ does not exist),
        and define $s_{\lambda_{(j,\leftarrow)}} := 0$ if $\lambda_{j} - \lambda_{j+1} = 1$ (i.e.\ if $\lambda_{(j,\leftarrow)}$ does not exist).
	\end{enumerate}
\end{definition}

\begin{example} \label{example:derived Schur}
    Consider the partition $\lambda = (3,3,2)$.
    We have $J(\lambda) = \{ 2 , 3 \}$.
    There are two partitions of $|\lambda| - 1$ contained in $\lambda$,
    given by $\lambda_{(2)} = (3,2,2)$ and $\lambda_{(3)} = (3,3,1)$,
    and three partitions of $|\lambda| - 2$ that are contained in $\lambda$,
    given by $\lambda_{(2,3)} = (3,2,1)$, $\lambda_{(2,\uparrow)} = (2,2,2)$, and $\lambda_{(3,\leftarrow)} = (3,3)$.
    We also have $s_{\tau_{(3,\uparrow)}} = s_{\tau_{(2,\leftarrow)}} = 0$.
\end{example}

Let $n$ be the numbers of variables.
Note that $s_{\lambda} (x_1 , \ldots , x_n) = 0$ whenever $n < \ell(\lambda)$
by Jacobi-Trudi identity and the fact that $e_m (x_1 , \ldots , x_n) = 0$ whenever $m > n$.
Thus, without loss of generality, we may assume that $n \geq \ell(\lambda)$.

By \cite[Theorem 1.1 and Theorem 1.5]{CK20},
we have
\begin{equation} \tag{$\dagger$} \label{equation:derived Schur}
    \begin{split}
            s_{\lambda}^{(1)} =
        &
            \sum_{j \in J} (n + \lambda_j - j) \, s_{\lambda_{(j)}}
            \, ,
        \\
            s_{\lambda}^{(2)} =
        &
            \sum_{\substack{j,k \in J \\ j<k}} (n + \lambda_j - j) (n + \lambda_k - k)
            \, s_{\lambda_{(j,k)}}
        \\
        &
            + \sum_{j \in J} \frac{1}{2} (n + \lambda_j - j) (n + \lambda_j - j + 1)
            \, s_{\lambda_{(j,\uparrow)}}
        \\
        &
            + \sum_{j \in J} \frac{1}{2} (n + \lambda_j - j) (n + \lambda_j - j - 1)
            \, s_{\lambda_{(j,\leftarrow)}}
            \, .
    \end{split}
\end{equation}

\begin{example}
    Following Example \ref{example:derived Schur}, we have
        \begin{align*}
                s_{(3,3,2)}^{(1)} &= (n + 1) s_{(3,2,2)} + (n - 1) s_{(3,3,1)}
                \, ,
            \\
                s_{(3,3,2)}^{(2)} &= (n + 1)(n - 1) s_{(3,2,1)} + \frac{1}{2} (n + 1)(n + 2) s_{(2,2,2)} + \frac{1}{2} (n - 1)(n - 2) s_{(3,3)}
                \, .
        \end{align*}
    
\end{example}

Plug (\ref{equation:derived Schur}) into $(s_{\lambda}^{(1)})^2 - s_{\lambda} s_{\lambda}^{(2)}$,
and we obtain the following.
\begin{equation} \tag{$*$} \label{equation:main}
    \begin{split}
            (s_{\lambda}^{(1)})^2 - s_{\lambda} s_{\lambda}^{(2)} =
        &
            \sum_{\substack{j,k \in J \\ j<k}}
            (n + \lambda_j - j) (n + \lambda_k - k)
            \, (2 \, s_{\lambda_{(j)}}s_{\lambda_{(k)}} - s_{\lambda}s_{\lambda_{(j,k)}})
	\\
        &
            + \sum_{j \in J}
            \frac{1}{2} (n + \lambda_j - j) (n + \lambda_j - j + 1)
            \, (s_{\lambda_{(j)}}^2 - s_{\lambda}s_{\lambda_{(j,\uparrow)}})
        \\
        &
            + \sum_{j \in J}
            \frac{1}{2} (n + \lambda_j - j) (n + \lambda_j - j - 1)
            \, (s_{\lambda_{(j)}}^2 - s_{\lambda}s_{\lambda_{(j,\leftarrow)}})
    \end{split}
\end{equation}

The second and the third term in (\ref{equation:main}) are always Schur positive by the following lemma.

\begin{lemma} \label{lemma:type 2&3}
    The polynomials $s_{\lambda_{(j)}}^2 - s_{\lambda}s_{\lambda_{(j,\uparrow)}}$ and $s_{\lambda_{(j)}}^2 - s_{\lambda}s_{\lambda_{(j,\leftarrow)}}$ are Schur positive.
\end{lemma}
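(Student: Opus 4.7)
My plan is to recognize both differences as instances of the Schur log-concavity result of Lam--Postnikov--Pylyavskyy~\cite{LPP07}: for any partition $\mu$ equipped with a distinct addable corner $c_+$ and removable corner $c_-$, the polynomial $s_\mu^2 - s_{\mu \cup \{c_+\}}\, s_{\mu \setminus \{c_-\}}$ is Schur positive. In both Type II and Type III I would set $\mu := \lambda_{(j)}$. The cell $(j,\lambda_j)$ is then an addable corner of $\mu$ (since $j \in J(\lambda)$ ensures that putting it back produces a valid Young diagram), and $\lambda = \mu \cup \{(j,\lambda_j)\}$.

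For Type II, the existence hypothesis $\lambda_{j-1} = \lambda_j$ for $\lambda_{(j,\uparrow)}$ gives $\mu_{j-1} = \lambda_j > \lambda_j - 1 = \mu_j$, so the cell $(j-1,\lambda_j)$ sits at the end of row $j-1$ of $\mu$ and is a removable corner; deleting it from $\mu$ recovers $\lambda_{(j,\uparrow)}$. For Type III, the existence hypothesis $\lambda_j - \lambda_{j+1} > 1$ for $\lambda_{(j,\leftarrow)}$ gives $\mu_j = \lambda_j - 1 > \lambda_{j+1} = \mu_{j+1}$, so the cell $(j,\lambda_j-1)$ is a removable corner of $\mu$, and deleting it recovers $\lambda_{(j,\leftarrow)}$. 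In each case the added cell and the removed cell are distinct (different rows in Type II, different columns in Type III), so the hypotheses of the LPP07 log-concavity statement are met.

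Applying that statement then yields Schur positivity of both differences at once, completing the proof. The only potential ``obstacle'' is translating the existence conditions for $\lambda_{(j,\uparrow)}$ and $\lambda_{(j,\leftarrow)}$ into the addable/removable corner structure of $\mu = \lambda_{(j)}$, which is immediate; no further combinatorial argument is required, since the substantive content lies entirely in~\cite{LPP07}.
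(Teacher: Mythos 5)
Your identification of $\mu=\lambda_{(j)}$ and of the relevant cells is correct, but the key lemma you attribute to \cite{LPP07} --- that $s_\mu^2 - s_{\mu\cup\{c_+\}}s_{\mu\setminus\{c_-\}}$ is Schur positive for \emph{any} addable corner $c_+$ and removable corner $c_-$ of $\mu$ --- is not a statement in that paper, and it is false in that generality. For example, take $\mu=(2,1)$ with $c_+$ the addable corner in row $1$ and $c_-$ the removable corner in row $2$, so $\mu\cup\{c_+\}=(3,1)$ and $\mu\setminus\{c_-\}=(2)$. Then $c^{(5,1)}_{(2,1),(2,1)}=0$ while $c^{(5,1)}_{(3,1),(2)}=1$, so $s_{(2,1)}^2-s_{(3,1)}s_{(2)}$ has coefficient $-1$ on $s_{(5,1)}$. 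So mere distinctness of the two corners cannot be the hypothesis; indeed you describe the configurations as ``different rows in Type II, different columns in Type III,'' whereas the point is the opposite: in Type II the removed cell lies directly \emph{above} the added cell (same column, adjacent rows), and in Type III it lies directly to its \emph{left} (same row, adjacent columns). It is exactly this adjacency that makes the two genuine theorems of \cite{LPP07} applicable, and without invoking that structure your argument has a genuine gap.

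The repair is what the paper does: for Type II, since $\lambda$ and $\lambda_{(j,\uparrow)}$ differ from $\lambda_{(j)}$ by one box in the two adjacent rows $j-1,j$ with $\lambda_{j-1}=\lambda_j$, one checks $\operatorname{sort}_1(\lambda,\lambda_{(j,\uparrow)})=\operatorname{sort}_2(\lambda,\lambda_{(j,\uparrow)})=\lambda_{(j)}$, and the Fomin--Fulton--Li--Poon ``sort'' theorem (Theorem \ref{theorem:LPP07}(b)) gives $s_{\lambda_{(j)}}^2\geq_s s_\lambda s_{\lambda_{(j,\uparrow)}}$; for Type III, $\lambda$ and $\lambda_{(j,\leftarrow)}$ differ only in row $j$ by $\pm1$, so $\tfrac{1}{2}(\lambda+\lambda_{(j,\leftarrow)})=\lambda_{(j)}$ and the Okounkov-type theorem (Theorem \ref{theorem:LPP07}(a)) applies. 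Note that in my counterexample above the sort theorem is vacuous ($\operatorname{sort}_1=(3,1)$, $\operatorname{sort}_2=(2)$), which is precisely why the general corner statement cannot be extracted from \cite{LPP07}.
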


Our proof of this lemma relies on results in \cite{LPP07} that we record here.
\begin{definition}
    Let $\lambda = (\lambda_i)_{i = 1}^{\infty}$ and $\mu = (\mu_i)_{i = 1}^{\infty}$ be two partitions.
    \begin{enumerate}[label = (\alph*)]
        \item If $\lambda_i + \mu_i$ is even for all $i$, define $\frac{\lambda + \mu}{2} := \big(\frac{\lambda_i + \mu_i}{2}\big)_{i = 1}^{\infty}$.

        \item Let $\nu_1 \geq \nu_2 \geq \nu_3 \geq \cdots$ be the sequence obtained by collecting all parts of $\lambda$ and $\mu$ and reordering them in non-increasing order.
        Define two partitions
        $\operatorname{sort}_1(\lambda , \mu) := (\nu_1 , \nu_3 , \nu_5 , \ldots)$
        and
        $\operatorname{sort}_2(\lambda , \mu) := (\nu_2 , \nu_4 , \nu_6 , \ldots)$.
    \end{enumerate}
\end{definition}

\begin{theorem} \label{theorem:LPP07}
    Let $\lambda = (\lambda_i)_{i = 1}^{\infty}$ and $\mu = (\mu_i)_{i = 1}^{\infty}$ be two partitions.
    Then we have
    \begin{enumerate}[label = (\alph*)]
        \item $\big( s_{\frac{\lambda + \mu}{2}} \big)^2 \geq_{s} s_{\lambda} s_{\mu}$ if $\lambda_i + \mu_i$ is even for all $i$.

        \item $s_{\operatorname{sort}_1(\lambda , \mu)} s_{\operatorname{sort}_2(\lambda , \mu)} \geq_{s} s_{\lambda} s_{\mu}$.
    \end{enumerate}
\end{theorem}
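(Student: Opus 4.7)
The plan is to deduce Schur positivity from a comparison of Littlewood-Richardson coefficients. Expanding $s_\alpha s_\beta = \sum_\tau c^\tau_{\alpha\beta} s_\tau$, the two inequalities are equivalent to
\[
c^\tau_{\gamma\delta} \geq c^\tau_{\lambda\mu} \quad \text{for every partition } \tau,
\]
where $(\gamma,\delta) = ((\lambda+\mu)/2, (\lambda+\mu)/2)$ for (a) and $(\gamma,\delta) = (\operatorname{sort}_1(\lambda,\mu), \operatorname{sort}_2(\lambda,\mu))$ for (b). It therefore suffices, for each $\tau$, to produce an injection from LR tableaux of shape $\tau/\lambda$ with content $\mu$ into LR tableaux of shape $\tau/\gamma$ with content $\delta$.

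For part (b), I would proceed by a bubble-sort strategy: whenever the interleaved multiset of parts is out of order at consecutive positions, perform a local swap between a part of $\lambda$ and a part of $\mu$, iterating until the pair reaches $(\operatorname{sort}_1, \operatorname{sort}_2)$ after finitely many steps. Lifting each local swap to an injection on pairs of tableaux is the delicate step; the natural tools are jeu-de-taquin slides or, more robustly, crystal-theoretic operations in the $\mathfrak{sl}_\infty$-crystal of semistandard tableaux, whose highest-weight elements are precisely the LR tableaux. For part (a), one can iterate the operation ``transfer a single box from $\lambda_i$ to $\mu_i$ whenever $\lambda_i \geq \mu_i + 2$''; the parity hypothesis guarantees this terminates with both partitions equal to the componentwise average, and each individual step is a Schur positivity statement in the spirit of Fomin-Fulton-Li-Poon that should itself be provable by a local LR-tableau injection.

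The main obstacle is exactly this lattice-word preservation: naive combinatorial swaps on fillings almost always destroy the Yamanouchi condition on the reverse reading word. The established frameworks that overcome this are (i) crystal theory, which provides automatic compatibility with the LR condition via Kashiwara's operators and crystal isomorphisms between tensor products of row crystals; and (ii) the Knutson-Tao hive polytope, in which $c^\tau_{\alpha\beta}$ is realized as a count of integer points in a polytope depending piecewise linearly on $(\alpha,\beta)$, so that (a) becomes a convexity/symmetrization statement and (b) a symmetry statement about lattice-point counts under permutation of boundary data. A more global alternative, in the spirit of Lam--Postnikov--Pylyavskyy's original proof, is to interpret the product $s_\lambda s_\mu$ in terms of cylindric skew Schur functions and derive both inequalities from Schur positivity of cylindric shapes, which is itself proved using nonintersecting lattice paths on a cylinder.
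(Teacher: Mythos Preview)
The paper does not reprove this theorem: its entire proof is a citation to Lam--Postnikov--Pylyavskyy, noting that part (a) is a special case of their Conjecture~1 and part (b) is their Conjecture~2, both established by their Theorem~4. So in this paper the statement functions as an imported black box, and no argument beyond the reference is expected.

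Your write-up, by contrast, is a survey of possible proof strategies rather than a proof. You correctly reduce both statements to coefficientwise inequalities $c^\tau_{\gamma\delta} \ge c^\tau_{\lambda\mu}$ and correctly identify the serious obstacle (naive swaps destroy the Yamanouchi condition). You also name the frameworks that actually work --- crystals, hives, and the cylindric Schur function approach that LPP07 in fact uses. But you stop short of carrying any one of them out: the bubble-sort reduction for (b) and the single-box-transfer reduction for (a) are stated as plans, and the lifting of each elementary step to an LR-tableau injection is precisely the hard content you leave open. As written this is a reasonable roadmap but not a proof; for the purposes of this paper, matching the author's treatment would simply mean citing \cite[Theorem~4]{LPP07}.
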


\begin{proof}
    Part (a) is a special case of \cite[Conjectures 1]{LPP07} with $\mu = \rho = 0$, and part (b) is \cite[Conjectures 2]{LPP07}, both of which are true by \cite[Theorem 4]{LPP07}.
\end{proof}

\begin{proof}[Proof of Lemma \ref{lemma:type 2&3}]
    If $\lambda_{(j,\uparrow)}$ or $\lambda_{(j,\leftarrow)}$ do not exist,
    then we have $s_{\lambda_{(j,\uparrow)}} = 0$ or $s_{\lambda_{(j,\leftarrow)}} = 0$
    and the lemma follows from the fact that
    products of Schur polynomials are Schur positive.

    Suppose that $\lambda_{(j,\uparrow)}$ exists.
    We have $\operatorname{sort}_1(\lambda , \lambda_{(j,\uparrow)}) = \operatorname{sort}_2(\lambda , \lambda_{(j,\uparrow)}) = \lambda_{(j)}$,
    and hence $s_{\lambda_{(j)}}^2 - s_{\lambda}s_{\lambda_{(j,\uparrow)}}$ is Schur positive by part (b) of Theorem \ref{theorem:LPP07}.

    Suppose that $\lambda_{(j,\leftarrow)}$ exists.
    We have $\frac{1}{2} (\lambda + \lambda_{(j,\leftarrow)}) = \lambda_{(j)}$,
    and hence $s_{\lambda_{(j)}}^2 - s_{\lambda}s_{\lambda_{(j,\leftarrow)}}$ is Schur positive by part (a) of Theorem \ref{theorem:LPP07}.
\end{proof}

Since the first term in (\ref{equation:main}) does not exist for rectangles $\lambda = (k^{\ell})$,
Lemma \ref{lemma:type 2&3} immediately implies the following.

\begin{corollary} \label{corollary:rectangle}
    Let $\lambda$ be a rectangle,
    i.e.\ a partition of the form $(k^{\ell})$,
    where $k , \ell$ are positive integers.
    Then $(s_{\lambda}^{(1)})^2 - s_{\lambda}s_{\lambda}^{(2)}$ is Schur positive.
\end{corollary}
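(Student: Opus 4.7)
The plan is to observe that for a rectangle $\lambda = (k^\ell)$, the expansion $(*)$ collapses in a particularly simple way: the Type I sum, which involves pairs $j < k$ in $J(\lambda)$, disappears entirely. Once this is established, Lemma \ref{lemma:type 2\&3} handles the remaining Type II and Type III contributions directly.

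First I would compute $J(\lambda)$ for $\lambda = (k^\ell)$. By definition, $J(\lambda) = \{j : \lambda_j > \lambda_{j+1}\}$. For a rectangle, the parts are $k, k, \ldots, k$ ($\ell$ copies) followed by zeros, so the only index at which a strict drop occurs is $j = \ell$. Hence $J(\lambda) = \{\ell\}$ is a singleton, and in particular the index set of pairs $\{(j,k) : j < k,\, j,k \in J(\lambda)\}$ is empty.

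Next, plugging this into formula $(*)$, I would observe that the first (Type I) sum vanishes, leaving
\[
    (s_{\lambda}^{(1)})^2 - s_{\lambda} s_{\lambda}^{(2)}
    =
    \tfrac{1}{2}(n + k - \ell)(n + k - \ell + 1)\bigl(s_{\lambda_{(\ell)}}^2 - s_{\lambda} s_{\lambda_{(\ell,\uparrow)}}\bigr)
    + \tfrac{1}{2}(n + k - \ell)(n + k - \ell - 1)\bigl(s_{\lambda_{(\ell)}}^2 - s_{\lambda} s_{\lambda_{(\ell,\leftarrow)}}\bigr).
\]
The scalar coefficients are non-negative whenever $n \geq \ell = \ell(\lambda)$, which we may assume without loss of generality.

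Finally, I would apply Lemma \ref{lemma:type 2\&3} directly: each of the two bracketed quantities is Schur positive, so a non-negative linear combination of them is Schur positive as well. There is no real obstacle here — the content of the corollary is entirely that the combinatorial structure of a rectangle forces $|J(\lambda)| = 1$, eliminating exactly the Type I terms that are \emph{not} covered by \cite{LPP07}. The substance of Theorem \ref{theorem:main} is thus concentrated in parts (b)--(d), where $|J(\lambda)| \geq 2$ and Type I terms must be treated individually.
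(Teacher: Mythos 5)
Your proposal is correct and matches the paper's argument exactly: the paper also observes that for a rectangle the Type I sum in $(\ref{equation:main})$ is empty (since $J(\lambda)=\{\ell\}$ is a singleton) and then concludes immediately from Lemma \ref{lemma:type 2&3}, with the non-negativity of the scalar coefficients guaranteed by $n \geq \ell(\lambda)$. Your added explicit computation of $J(\lambda)$ and the collapsed formula is just a slightly more detailed write-up of the same proof.
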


The first term in (\ref{equation:main}),
however,
is not necessarily Schur positive.
In fact,
since $\operatorname{sort}_1(\lambda_{(j)} , \lambda_{(k)})= \lambda$ and $\operatorname{sort}_2(\lambda_{(j)} , \lambda_{(k)}) = \lambda_{(j,k)}$ in the sense of \cite{LPP07},
we have that
$s_{\lambda_{(j)}}s_{\lambda_{(k)}} - s_{\lambda}s_{\lambda_{(j,k)}}$ is Schur negative
by \cite[Conjecture 2]{LPP07}
(which is true by \cite[Theorem 4]{LPP07}).
In each of the remaining sections,
we pin down the negative contributions in the first term in (\ref{equation:main})
and show that they are compensated by the other two terms in (\ref{equation:main}).
\section{Hooks}

Fix positive integers $k , \ell \ge 3$ and consider the partition $\lambda = (k , 1^{\ell - 1})$.
Such a partition is called a hook because of the shape of its Young diagram.
Let $n \geq \ell$ be the number of variables.
The goal of this section is to prove the following.

\begin{theorem} \label{theorem:hook}
    Let $\lambda$ be a hook,
    i.e.\ a partition of the form $(k , 1^{\ell - 1})$, where $k , \ell \geq 3$.
    Then $(s_{\lambda}^{(1)})^2 - s_{\lambda}s_{\lambda}^{(2)}$ is Schur positive.
\end{theorem}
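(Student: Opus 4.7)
The plan is to apply the decomposition (\ref{equation:main}) to the hook $\lambda = (k, 1^{\ell-1})$. The sequence $k, 1, 1, \ldots, 1, 0$ has strict descents only at positions $1$ and $\ell$, so $J(\lambda) = \{1, \ell\}$, and the Type I sum in (\ref{equation:main}) contracts to the single term indexed by $(1, \ell)$, with $\lambda_{(1,\ell)} = (k-1, 1^{\ell-2})$. The Type II and Type III sums each have two summands, but by Lemma \ref{lemma:type 2&3} every one of them is Schur positive; moreover each scalar prefactor in (\ref{equation:main}) is a non-negative polynomial in $n$ on the range $n \geq \ell$ and $k \geq 3$. Hence all obstruction to Schur positivity is concentrated in the single Type I bracket
\[
A := 2\, s_{\lambda_{(1)}}\, s_{\lambda_{(\ell)}} - s_\lambda\, s_{\lambda_{(1,\ell)}},
\]
where $\lambda_{(1)} = (k-1, 1^{\ell-1})$ and $\lambda_{(\ell)} = (k, 1^{\ell-2})$.

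The pair $(\lambda_{(1)}, \lambda_{(\ell)})$ satisfies $\operatorname{sort}_1 = \lambda$ and $\operatorname{sort}_2 = \lambda_{(1,\ell)}$, so Theorem \ref{theorem:LPP07}(b) gives $s_\lambda\, s_{\lambda_{(1,\ell)}} \geq_s s_{\lambda_{(1)}}\, s_{\lambda_{(\ell)}}$, which is in the ``wrong direction'' for the Schur positivity of $A$. My plan is therefore to establish, for most outer shapes $\alpha$ of size $2|\lambda| - 2$, the sharper inequality $2\, c^{\alpha}_{\lambda_{(1)}, \lambda_{(\ell)}} \geq c^{\alpha}_{\lambda, \lambda_{(1,\ell)}}$ by constructing an at-most-$2$-to-$1$ map from LR tableaux of shape $\alpha/\lambda$ with content $\lambda_{(1,\ell)}$ to LR tableaux of shape $\alpha/\lambda_{(1)}$ with content $\lambda_{(\ell)}$. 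The two skew shapes $\alpha/\lambda$ and $\alpha/\lambda_{(1)}$ differ only in the single end-of-row-$1$ cell, and Lemma \ref{lemma:badLRT} forces LR tableaux of these hook contents into very narrow patterns, so the map I envision is a local ``insert or delete at the arm corner'' operation followed by a small adjustment of one adjacent entry to restore the lattice-word condition.

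I would then enumerate the finite list of outer shapes $\alpha$ for which the $2$-to-$1$ map fails to cover all its sources -- these are exactly the $\alpha$ for which $[s_\alpha] A < 0$ -- by identifying the failure modes of the construction. For each such bad $\alpha$ I would directly compute $[s_\alpha]\bigl((s_\lambda^{(1)})^2 - s_\lambda s_\lambda^{(2)}\bigr)$ from (\ref{equation:main}); the non-negative contributions from the Type II and Type III brackets at the same $\alpha$ combine with the negative Type I contribution to produce a quadratic polynomial in $n$ whose non-negativity on $n \geq \ell$ should follow by factorisation or direct estimate.

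The main obstacle is the detailed case analysis on the bad $\alpha$: identifying every failure mode of the $2$-to-$1$ map, and then verifying in each case that the compensating contributions from the Type II bracket (which sees only the leg of $\lambda$, via $\lambda_{(\ell, \uparrow)} = (k, 1^{\ell-3})$) and the Type III bracket (which sees only the arm, via $\lambda_{(1, \leftarrow)} = (k-2, 1^{\ell-1})$) are sufficient. The saving grace is that hook-times-hook products have very few appearing shapes with only small LR multiplicities, so the enumeration and the final polynomial checks should be short.
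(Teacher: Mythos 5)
Your strategy is the same as the paper's: use (\ref{equation:main}) and Lemma \ref{lemma:type 2&3} to reduce everything to the single Type I bracket $2\,s_{\lambda_{(1)}}s_{\lambda_{(\ell)}}-s_{\lambda}s_{\lambda_{(1,\ell)}}$, prove the coefficientwise inequality $2\,c^{\alpha}_{\lambda_{(1)},\lambda_{(\ell)}}\geq c^{\alpha}_{\lambda,\lambda_{(1,\ell)}}$ for all but finitely many outer shapes $\alpha$ by a tableau map, and absorb the exceptional shapes using the Type II/III contributions. However, the proposal stops exactly where the real work begins. The map is never constructed: you only assert a ``local insert or delete at the arm corner'' followed by ``a small adjustment of one adjacent entry,'' and that description is too weak to carry through. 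In the paper's construction (Lemma \ref{lemma:hook type 1}), which inserts the new cell at the foot of the first column (target $\LR(\alpha/\lambda_{(\ell)},\lambda_{(1)})$) rather than at the arm, the repair in the hardest case ($\alpha_{\ell}=2$, $\alpha_{\ell+1}=0$) is genuinely non-local: one moves the last $1$ of the second row down to the $\ell$-th row, moves the $2$ to the end of the second row, shifts $3,\ldots,\ell-1$ up a cell, and only then places the new entry $\ell$; the lattice-word check uses the hypothesis $\alpha\neq(k^2,2^{\ell-2})$. Your arm-corner variant (target $\LR(\alpha/\lambda_{(1)},\lambda_{(\ell)})$) faces analogous non-local repairs whenever $\alpha_2\geq k$, since the inserted $1$ may sit directly above another $1$. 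Exhibiting a well-defined injective (or at most $2$-to-$1$) map in all cases is precisely the missing content, not a routine afterthought; note also that in the hook case an honest injection exists, so the factor $2$ is not even needed.

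Second, the exceptional shapes are neither identified nor handled, and your statement that the failures of the map ``are exactly the $\alpha$ for which $[s_\alpha]A<0$'' is not a valid inference: the failure of one particular construction does not certify negativity. In fact there is exactly one bad shape, $\beta=(k^2,2^{\ell-2})$, where no such map can exist because $\LR(\beta/\lambda_{(\ell)},\lambda_{(1)})$ is empty while $\LR(\beta/\lambda,\lambda_{(1,\ell)})$ is a singleton. Compensation at $\beta$ requires computing all six relevant Littlewood--Richardson coefficients (Lemma \ref{lemma:hook special}): $c^{\beta}_{\lambda_{(1)},\lambda_{(1)}}=c^{\beta}_{\lambda_{(\ell)},\lambda_{(\ell)}}=c^{\beta}_{\lambda,\lambda_{(1,\ell)}}=1$ and the other three vanish, which makes the coefficient of $s_\beta$ in $(s_{\lambda}^{(1)})^2-s_{\lambda}s_{\lambda}^{(2)}$ equal to $(n+k-1)(k+\ell-2)+(n+1-\ell)^2>0$; in particular it is not enough to invoke the mere non-negativity of the Type II/III brackets, one needs their exact contributions at $\beta$. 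Your plan anticipates this kind of check but does not perform it, so the argument is incomplete at both of its essential steps.
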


There are two partitions of $|\lambda| - 1$ contained in $\lambda$,
given by
$\lambda_{(1)} = (k - 1 , 1^{\ell - 1})$ and $\lambda_{(\ell)} = (k , 1^{\ell - 2})$,
as well as three partitions of $|\lambda| - 2$ contained in $\lambda$,
given by
$\lambda_{(1,\ell)} = (k - 1 , 1^{\ell - 2})$ and
$\lambda_{(\ell,\uparrow)} = (k , 1^{\ell - 3})$ and
$\lambda_{(1,\leftarrow)} = (k - 2 , 1^{\ell - 1})$.
The theorem will follow from the following two lemmas.

\begin{lemma} \label{lemma:hook type 1}
    Let $\alpha$ be a partition of $2|\lambda| - 2 = 2(k + \ell - 2)$, containing $\lambda$ and not equal to $(k^2 , 2^{\ell - 2})$.
    Then we have $c^{\alpha}_{\lambda_{(1)} , \lambda_{(\ell)}} \geq c^{\alpha}_{\lambda , \lambda_{(1,\ell)}}$.
\end{lemma}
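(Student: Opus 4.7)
The plan is to construct an injection $\phi$ from the set $\mathcal{A}$ of Littlewood-Richardson tableaux of shape $\alpha/\lambda$ with content $\lambda_{(1,\ell)} = (k-1, 1^{\ell-2})$ into the set $\mathcal{B}$ of LR tableaux of shape $\alpha/\lambda_{(1)}$ with content $\lambda_{(\ell)} = (k, 1^{\ell-2})$. The shapes differ by the single cell $(1,k) \in \lambda \setminus \lambda_{(1)}$ and the contents by one extra $1$. Because each content is a ``near-hook'' with exactly one of each value $j \in \{2, \ldots, \ell-1\}$, any tableau $T$ in $\mathcal{A}$ or $\mathcal{B}$ has a unique cell $P_j(T) = (r_j, c_j)$ containing $j$; by the lattice condition these cells $P_2, \ldots, P_{\ell-1}$ appear in this order in the reverse reading word.

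The definition of $\phi$ splits into two cases. The \emph{simple case} is when $(2,k) \notin \alpha$ or $T(2,k) \neq 1$: here I just add a $1$ at $(1,k)$, and one checks directly that the result is a valid LR tableau. The \emph{chain case} is when $T(2,k) = 1$. In this case a count of $1$'s in rows $1$ and $2$ shows that $\alpha_1 + \alpha_2 \leq 2k$ is forced; combined with $\alpha_1 \geq \alpha_2 \geq k$ (the latter from $(2,k) \in \alpha$), this gives $\alpha_1 = \alpha_2 = k$, so row $2$ of $T$ is filled with $k-1$ ones and all $P_j$ lie in rows $\geq 3$. I then define $\phi(T)$ by a ``chain of length $m$'' operation: add $(1,k) \mapsto 1$, promote $(2,k)$ from $1$ to $2$, promote $P_j$ from $j$ to $j+1$ for $j = 2, \ldots, m-1$, and demote $P_m$ from $m$ to $1$, taking $m \in \{2, \ldots, \ell-1\}$ to be the smallest value for which the result is a valid LR tableau.

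The correctness of $\phi$ reduces to the following \emph{local validity criterion}: the chain of length $m$ yields a valid LR tableau if and only if (A) the cell $(r_m-1, c_m)$ lies outside $\alpha/\lambda_{(1)}$ (so the demotion $P_m \to 1$ does not violate column-strictness), (B) $P_m$ is not directly below $P_{m-1}$ (with convention $P_1 := (2,k)$), and (C) $P_m$ is not directly right of $P_{m-1}$. A direct content count confirms that $\phi(T)$ has content $\lambda_{(\ell)}$, and all other row, column, and lattice conditions propagate from $T$. Injectivity of $\phi$ is verified by inverting: inspect the value of $\phi(T)$ at $(2,k)$, and either reverse the simple case or trace the chain forward from $(2,k)$ through the cells with new values $3, 4, \ldots$ until hitting the unique cell whose new value is $1$, and undo.

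The main obstacle is to show that whenever $\alpha \neq (k^2, 2^{\ell-2})$, some $m \in \{2, \ldots, \ell-1\}$ satisfies the local criterion (A)-(C). I argue by contradiction: if every $m$ fails at least one of (A), (B), (C), then a careful propagation of these failures through the reading-order chain $P_2, \ldots, P_{\ell-1}$ rigidly forces $P_j = (j+1, 2)$ for every $j$. A content count on the remaining $\ell-2$ non-$1$ cells then pins $\alpha_3 = \cdots = \alpha_\ell = 2$ and $\ell(\alpha) = \ell$, so $\alpha = (k^2, 2^{\ell-2})$, contradicting the hypothesis. Carrying out this propagation—specifically, verifying that the combined failure of the column condition (A) and the two adjacency conditions (B), (C) leaves no freedom except the fully-stacked configuration in column $2$—is the most delicate part of the argument.
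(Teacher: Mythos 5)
Your route is genuinely different from the paper's. The paper realizes $c^{\alpha}_{\lambda_{(1)},\lambda_{(\ell)}}$ as $\#\LR(\alpha/\lambda_{(\ell)},\lambda_{(1)})$ and injects $\LR(\alpha/\lambda,\lambda_{(1,\ell)})$ into that set by adding a box at the \emph{start of row $\ell$} carrying the new largest entry $\ell$, after first using Lemma \ref{lemma:hook LRT} to reduce to $\alpha_3\le 2$, $\alpha_{\ell+1}\le 1$ and then splitting into three cases according to $\alpha_{\ell},\alpha_{\ell+1}$. You instead realize the same coefficient as $\#\LR(\alpha/\lambda_{(1)},\lambda_{(\ell)})$ and add the box $(1,k)$ carrying a new entry $1$, repairing column-strictness by a bumping chain when $T(2,k)=1$. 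The skeleton is viable, and your reduction in the chain case is correct as far as it goes: $T(2,k)=1$ forces row $2$ to consist of all $k-1$ ones, hence $\alpha_1=\alpha_2=k$ and all cells with entries $2,\dots,\ell-1$ lie in rows $\ge 3$.

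However, as written the argument has a gap exactly at its crux, which you yourself flag: neither the ``local validity criterion'' (A)--(C) nor the propagation step is proved. Conditions (A)--(C) only address column-strictness and adjacency; you never verify that the output of a length-$m$ chain satisfies the lattice-word condition, nor that failure of (A)--(C) for every $m\le \ell-1$ really forces $P_j=(j+1,2)$ for all $j$. Your inversion test is also too weak as stated: a simple-case image can perfectly well have a $2$ in cell $(2,k)$, so the entry there does not decide which case to undo; disjointness of the two images does hold, but because removing the added $1$ at $(1,k)$ from a chain-case image leaves a filling whose reverse reading word begins with $2$, hence is not an LR tableau. The good news is that the gap closes if you push your counting one step further: when $T(2,k)=1$, the same arguments as in Lemma \ref{lemma:hook LRT} give $\alpha_3\le 2$ and $\alpha_{\ell+1}\le 1$, so the cells carrying $2,\dots,\ell-1$ occur one per row, first in column $2$ (rows $3,\dots,r$) and then in column $1$ (rows $\ell+1,\dots,2\ell-r$), and the lattice condition forces their entries to increase along reading order; thus $T$ is the \emph{unique} chain-case tableau of its shape, and $\alpha=(k^2,2^{r-2},1^{2\ell-2r})$. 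Taking $m=r$, the first entry sitting in column $1$, one checks all four requirements (rows, columns, content, lattice word) directly, and such an $m\le\ell-1$ exists precisely because $\alpha\neq(k^2,2^{\ell-2})$. With this rigidity in hand your criterion and propagation machinery become unnecessary and the injection is complete; without it, the proposal is an outline rather than a proof.
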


\begin{lemma} \label{lemma:hook special}
    Let $\beta = (k^2 , 2^{\ell - 2})$.
    Then we have
    \[
        \Big(
        c^{\beta}_{\lambda_{(1)} , \lambda_{(\ell)}} ,
        c^{\beta}_{\lambda_{(\ell)} , \lambda_{(\ell)}} ,
        c^{\beta}_{\lambda_{(1)} , \lambda_{(1)}} ,
        c^{\beta}_{\lambda , \lambda_{(1,\ell)}} ,
        c^{\beta}_{\lambda , \lambda_{(\ell,\uparrow)}} ,
        c^{\beta}_{\lambda , \lambda_{(1,\leftarrow)}}
        \Big)
        =
        (0 , 1 , 1 , 1 , 0 , 0) \, .
    \]
\end{lemma}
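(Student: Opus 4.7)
The plan is to establish each of the six Littlewood-Richardson coefficients by directly enumerating LR tableaux of shape $\beta / \mu$ with content $\nu$ via Definition \ref{definition:Littlewood-Richardson tableaux}. The key structural observation is that for every $\mu$ appearing in the statement, column 2 of the skew shape $\beta / \mu$ contains exactly $\ell - 1$ cells (in rows 2 through $\ell$), while row 2 of $\beta/\mu$ always has $k - 1$ cells (in columns 2 through $k$). The strict-increase constraint down column 2, together with the requirement that the first entry of the reverse reading word be a 1, pins down most of the entries in every case.

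Two coefficients vanish for immediate rank reasons. For $c^{\beta}_{\lambda , \lambda_{(\ell,\uparrow)}}$, the content $\lambda_{(\ell,\uparrow)} = (k, 1^{\ell-3})$ has length $\ell - 2$ while column 2 has $\ell - 1$ cells, so Lemma \ref{lemma:badLRT}(a) gives 0. For $c^{\beta}_{\lambda , \lambda_{(1,\leftarrow)}}$, row 1 of $\beta/\lambda$ is empty, so the first entry of the reading word lies at row 2 column $k$ and must equal 1; weak-increase then forces every entry of row 2 to be 1, which exceeds the $k - 2$ ones available in $\lambda_{(1,\leftarrow)} = (k-2, 1^{\ell-1})$. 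The remaining vanishing case, $c^{\beta}_{\lambda_{(1)} , \lambda_{(\ell)}}$, uses that column 2 is forced to be $1, 2, \ldots, \ell - 1$, after which every remaining entry (including row 1 column $k$) must be 1, violating strict-increase in column $k$.

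The three coefficients equal to 1 are verified by exhibiting the unique admissible tableau. For $c^{\beta}_{\lambda , \lambda_{(1,\ell)}}$ and $c^{\beta}_{\lambda_{(\ell)} , \lambda_{(\ell)}}$, column 2 is forced to be $1, 2, \ldots, \ell - 1$ and the remaining cells must be filled with 1's, yielding a unique tableau whose reverse reading word is immediately seen to be a lattice permutation. For $c^{\beta}_{\lambda_{(1)} , \lambda_{(1)}}$, the single cell in row 1 column $k$ must be 1, and then the lattice-word property forces row 2 column $k$ to be 2, which in turn forces column 2 to follow the ``skip'' pattern $1, 3, 4, \ldots, \ell$ (from row 2 down to row $\ell$) and the remaining row 2 cells to be 1's. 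The main obstacle is the bookkeeping across the six slightly different skew shapes and contents; the case $c^{\beta}_{\lambda_{(1)} , \lambda_{(1)}}$ is the most delicate, since here it is the lattice-word property rather than any column-strict constraint that pins down the value at row 2 column $k$, producing the non-trivial ``skip'' pattern.
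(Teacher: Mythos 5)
Your proposal is correct and follows essentially the same route as the paper: direct case-by-case enumeration of the Littlewood--Richardson tableaux, with the vanishing coefficients killed by the elementary non-existence criteria and the three coefficients equal to $1$ established by exhibiting the unique tableau (your forced fillings agree with the paper's Figure \ref{figure:hook special}). The only differences are cosmetic: for $c^{\beta}_{\lambda,\lambda_{(1,\leftarrow)}}$ and $c^{\beta}_{\lambda_{(1)},\lambda_{(\ell)}}$ you use a first-row/column-forcing argument (and the opposite choice of inner shape) where the paper invokes Lemma \ref{lemma:badLRT}(c), and you spell out the uniqueness forcing that the paper leaves as ``one may check.''
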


\begin{proof}[Proof of Theorem \ref{theorem:hook} ]
    By Lemma \ref{lemma:hook type 1} and equation (\ref{equation:main}),
    the only possibly negative coefficient in the Schur expansion of $(s_{\lambda}^{(1)})^2 - s_{\lambda}s_{\lambda}^{(2)}$ is that of $s_{(k^2 , 2^{\ell - 2})}$.
    However, by Lemma \ref{lemma:hook special} and equation (\ref{equation:main}), this coefficient is equal to
    \begin{align*}
        &
            - (n + k - 1)(n + 1 - \ell)
            + \frac{1}{2} (n + k - 1)(n + k)
            + \frac{1}{2} (n + 1 - \ell)(n + 2 - \ell)
        \\
        &
            + \frac{1}{2} (n + k - 1)(n + k - 2)
            + \frac{1}{2} (n + 1 - \ell)(n - \ell)
        \\
            = \,
        &
            - (n + k - 1)(n + 1 - \ell)
            + (n + k - 1)^2
            + (n + 1 - \ell)^2
        \\
            = \,
        &
            (n + k - 1)(k + \ell - 2)
            + (n + 1 - \ell)^2 \, ,
    \end{align*}
    which is positive.
\end{proof}

Before proving Lemma \ref{lemma:hook type 1},
we observe that it is obviously true when 
$c^{\alpha}_{\lambda , \lambda_{(1,\ell)}} = 0$, which occurs in the following case:

\begin{lemma} \label{lemma:hook LRT}
    Let $\alpha = (\alpha_i)_{i = 1}^{\infty}$ be a partition of $2|\lambda| - 2$ with $\alpha_3 > 2$ or $\alpha_{\ell + 1} > 1$.
    Then we have $c^{\alpha}_{\lambda , \lambda_{(1,\ell)}} = 0$.
\end{lemma}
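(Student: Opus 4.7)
The plan is to split on the two clauses of the hypothesis, using a column-length bound for the case $\alpha_{\ell+1} > 1$ and a direct filling-analysis of the four corner cells $(2,2), (2,3), (3,2), (3,3)$ for the case $\alpha_3 > 2$.

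For $\alpha_{\ell+1} > 1$, since $\alpha$ is a partition, $\alpha_{\ell+1} \geq 2$ propagates upward to give $\alpha_i \geq 2$ for $i = 1, \ldots, \ell+1$. Thus the second column of $\alpha$ contains at least $\ell+1$ cells. The second column of $\lambda = (k, 1^{\ell-1})$ contains only the one in row $1$, so the second column of $\alpha/\lambda$ has at least $\ell$ cells, which exceeds $\ell(\lambda_{(1,\ell)}) = \ell - 1$. Lemma~\ref{lemma:badLRT}(a) then yields $c^\alpha_{\lambda, \lambda_{(1,\ell)}} = 0$ at once.

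For $\alpha_3 > 2$, the inequalities $\alpha_1 \geq \alpha_2 \geq \alpha_3 \geq 3$ combined with $\lambda_2 = \lambda_3 = 1$ place all four cells $(2,2), (2,3), (3,2), (3,3)$ in $\alpha/\lambda$. Since $k \geq 3$, any cells of row $1$ of $\alpha/\lambda$ lie in columns $\geq k+1 \geq 4$ and therefore do not touch columns $2$ or $3$. I would denote the entries at these four cells of a hypothetical Littlewood-Richardson tableau by $(a,b,c,d)$, subject to $a \leq b$, $c \leq d$, $a < c$, $b < d$. If $\alpha_1 = k$, then row $2$ is the first non-empty row, forcing $(a,b) = (1,1)$, and row $3$ is the second non-empty, forcing $(c,d) = (2,2)$; this requires two $2$'s and contradicts $\nu_2 = 1$ for $\nu = \lambda_{(1,\ell)} = (k-1, 1^{\ell-2})$. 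If $\alpha_1 > k$, the row lemma gives $b \leq 2$ and $d \leq 3$, and a short case check on $(a,b,c,d)$ with $\nu_2 = \nu_3 = 1$ rules out every configuration except $(a,b,c,d) = (1,1,2,3)$.

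The main obstacle is the remaining configuration $(1,1,2,3)$, whose content is a priori compatible with $\nu$. The plan here is to first rigidify the rest of rows $2$ and $3$ and then invoke the lattice-permutation condition. Row $3$ being weakly increasing with $a_{3,3} = 3$ and all values $\leq 3$ forces $\alpha_3 = 3$, lest a second $3$ arise. Row $2$ is weakly increasing with values in $\{1, 2\}$ and starts $1, 1$; it cannot contain any further $2$ because $c = a_{3,2} = 2$ already saturates $\nu_2 = 1$. Hence row $2$ is entirely $1$'s. The reverse reading word then begins with $(\alpha_1 - k) + (\alpha_2 - 1)$ copies of $1$ followed immediately by $a_{3,3} = 3$, at which moment $\#2 = 0 < 1 = \#3$, violating the lattice-permutation condition. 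Thus no such Littlewood-Richardson tableau exists and $c^\alpha_{\lambda, \lambda_{(1,\ell)}} = 0$.
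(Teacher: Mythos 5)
Your proof is correct. The first case ($\alpha_{\ell+1} > 1$) is identical to the paper's: the second column of $\alpha/\lambda$ has at least $\ell$ cells while $\ell(\lambda_{(1,\ell)}) = \ell - 1$, so Lemma~\ref{lemma:badLRT}(a) applies. For the case $\alpha_3 > 2$ the paper is more direct: it looks only at the entries $A \leq B$ in the cells $(3,2)$ and $(3,3)$, notes both exceed $1$ because of the cells above them, notes they are distinct because every value $\geq 2$ occurs at most once in the hook content $\lambda_{(1,\ell)}$, and then observes that $B$ is read before the unique $A$ in the reverse reading word, violating the lattice condition (via the chain of count inequalities for consecutive values from $B$ down to $A$); no case analysis is needed. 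Your route reaches the same contradiction through an explicit enumeration of the block $(2,2),(2,3),(3,2),(3,3)$, a split on $\alpha_1 = k$ versus $\alpha_1 > k$, and a final lattice-word argument for the surviving pattern $(1,1,2,3)$ --- which is exactly the paper's contradiction specialized to $A=2$, $B=3$, after you have pinned down rows $1$ and $2$ to be all $1$'s. What your extra work buys is a completely explicit description of the failure; what it costs is the enumeration that the paper bypasses by using distinctness of the two row-$3$ entries directly. One small imprecision: when $\ell = 3$ the content is $\lambda_{(1,\ell)} = (k-1,1)$, so $\nu_3 = 0$ rather than $1$; since your case check only uses the upper bounds (at most one $2$, at most one $3$), nothing breaks --- indeed $(1,1,2,3)$ is then excluded outright --- but the assertion $\nu_2 = \nu_3 = 1$ should read $\nu_2 = 1$, $\nu_3 \leq 1$.
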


\begin{proof}
    
    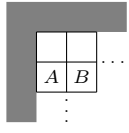
\begin{figure}
        \caption{A tableau of shape $\alpha/\lambda$, where $\alpha_3 > 2$ \vspace{5pt}}
        \label{figure:hook LRT}
        \begin{tikzpicture}[scale=0.4]
            \fill[gray] (0 , 0) -- (4 , 0) -- (4 , -1) -- (1 , -1) -- (1 , -4) -- (0 , -4) -- (0 , 0) ;
            
            \draw (1 , -1) -- (3 , -1) ;
            \draw (1 , -2) -- (3 , -2) ;
            \draw (1 , -3) -- (3 , -3) ;
            \draw (1 , -1) -- (1 , -3) ;
            \draw (2 , -1) -- (2 , -3) ;
            \draw (3 , -1) -- (3 , -3) ;
            \draw (3.6 , -2) node {\tiny $\cdots$} ;
            \draw (2 , -3.35) node {\tiny $\vdots$} ;
            \draw (1.5 , -2.5) node {\scriptsize $A$} ;
            \draw (2.5 , -2.5) node {\scriptsize $B$} ;
        \end{tikzpicture}
    \end{figure}
    
    If $\alpha_{\ell + 1} > 1$,
    then the second column of $\alpha/\lambda$ contains at least $\ell$ cells.
    However,
    we have $\ell(\lambda_{(1,\ell)}) = \ell - 1 < \ell$.
    Thus, we get $c^{\alpha}_{\lambda , \lambda_{(1,\ell)}} = 0$
    by Lemma \ref{lemma:badLRT}(a).
    
    Next,
    suppose $\alpha_3 > 2$ but $c^{\alpha}_{\lambda , \lambda_{(1 , \ell)}} > 0$.
    Let $T$ be a Littlewood-Richardson tableau of shape $\alpha/\lambda$ with content $\lambda_{(1 , \ell)}$.
    See Figure \ref{figure:hook LRT}.
    As $\alpha_3 > 2$,
    there are at least two cells in the third row of $T$.
    Let $A , B$ be the integers in the first two cells of the third row of $T$, so certainly $A \le B$.
    Since there is a cell above each of these two cells,
    both $A$ and $B$ are greater than $1$.
    Note that $\lambda_{(1 , \ell),i} \leq 1$ for all $i > 1$ as $\lambda$ is a hook, so we must have $A$ and $B$ are distinct, thus $A < B$.
    Then in the reverse reading word of $T$,
    the integer $B$ comes before the only $A$ in the word.
    This contradicts condition (b) of Definition \ref{definition:Littlewood-Richardson tableaux}.
    Therefore,
    there do not exist Littlewood-Richardson tableaux of shape $\alpha/\lambda$ with content $\lambda_{(1 , \ell)}$ in this case,
    and we must have $c^{\alpha}_{\lambda , \lambda_{(1,\ell)}} = 0$.
\end{proof}

\begin{proof}[Proof of Lemma \ref{lemma:hook type 1}]
    For each partition $\alpha$ of $2|\lambda| - 2$,
    containing $\lambda$ and not equal to $(k^2 , 2^{\ell - 2})$,
    in order to prove the inequality $c^{\alpha}_{\lambda_{(1)} , \lambda_{(\ell)}} \geq c^{\alpha}_{\lambda , \lambda_{(1,\ell)}}$,
    we construct an injection from $\LR(\alpha/\lambda , \lambda_{(1,\ell)})$,
    the set of Littlewood-Richardson tableaux of shape $\alpha/\lambda$ and content $\lambda_{(1,\ell)}$,
    to $\LR(\alpha/\lambda_{(\ell)} , \lambda_{(1)})$,
    the set of Littlewood-Richardson tableaux of shape $\alpha/\lambda_{(\ell)}$ and content $\lambda_{(1)}$.
    
    By Lemma \ref{lemma:hook LRT},
    it suffices to consider $\alpha = (\alpha_i)_{i = 1}^{\infty}$ with $\alpha_{\ell + 1} \leq 1$ and $\alpha_3 \leq 2$ (which implies $\alpha_{\ell} \leq 2$).
    We divide into $3$ cases, namely  (1)  $\alpha_{\ell} = 1$ (2)
    $\alpha_{\ell} = 2$ and $\alpha_{\ell + 1} = 1$ and (3) $\alpha_{\ell} = 2$ and $\alpha_{\ell + 1} = 0$.
    In each case,
    we give an algorithm that turns a Littlewood-Richardson tableau $T \in \LR(\alpha/\lambda , \lambda_{(1,\ell)})$
    into a Littlewood-Richardson tableau $T' \in \LR(\alpha/\lambda_{(\ell)} , \lambda_{(1)})$
    by adding an extra cell to the beginning of the $\ell$-th row and an extra integer $\ell$ to the content,
    and we check that $T'$ satisfies both conditions (a) and (b) in Definition \ref{definition:Littlewood-Richardson tableaux}.  

    As such we obtain a map
    $$\LR(\alpha/\lambda , \lambda_{(1,\ell)}) \to \LR(\alpha/\lambda_{(\ell)} , \lambda_{(1)})$$
    which by the construction is clearly injective, proving the result.
    
    \begin{enumerate}[label=$\bullet$]
        \begin{figure}
            \captionsetup{width=300pt}
            \caption{The map $\LR(\alpha/\lambda , \lambda_{(1,\ell)}) \to \LR(\alpha/\lambda_{(\ell)} , \lambda_{(1)})$ in Case 1. In this and the following figures that illustrate maps, the cells that will be changed are indicated by either a blue border, and the integer that is added is indicated by a red border \vspace{5pt}}
            \label{figure:hook case 1}
            \begin{tikzpicture}[scale=0.4]
                \draw
                    (-0.7 , -0.5) node {\tiny $1$}
                    (-0.7 , -1.5) node {\tiny $2$}
                    (-0.7 , -2.3) node {\tiny $\vdots$}
                    (-0.7 , -3.5) node {\tiny $\ell \!\! - \!\! 1$}
                    (-0.7 , -4.5) node {\tiny $\ell$}
                    (0.5 , 0.5) node {\tiny $1$}
                    (1.55 , 0.5) node {\tiny $\cdots$}
                    (2.55 , 0.5) node {\tiny $k$}
                ;
                
                \fill[gray] (0 , 0) -- (3 , 0) -- (3 , -1) -- (1 , -1) -- (1 , -5) -- (0 , -5) -- (0 , 0) ;
                \draw
                    (0 , -5) -- (1 , -5) -- (1 , -6) -- (0 , -6) -- (0 , -5)
                    ;
                \draw (0 , -7) -- (1 , -7) ;
                \draw (0 , -8) -- (1 , -8) ;
                \draw (0 , -7) -- (0 , -8) ;
                \draw (1 , -7) -- (1 , -8) ;
                \draw (0.5 , -6.3) node {\tiny $\vdots$} ;
                \draw (1.55 , -1.5) node {\tiny $\cdots$} ;
                \draw (3.55 , -0.5) node {\tiny $\cdots$} ;
                \draw[blue , very thick] (0 , -5) -- (1 , -5) -- (1 , -8) -- (0 , -8) -- (0 , -5) ;
                
                \draw (5.3 , -4.5) node {$\mapsto$} ;
                
                \draw
                    (7.3 , -0.5) node {\tiny $1$}
                    (7.3 , -1.5) node {\tiny $2$}
                    (7.3 , -2.3) node {\tiny $\vdots$}
                    (7.3 , -3.5) node {\tiny $\ell \!\! - \!\! 1$}
                    (7.3 , -4.5) node {\tiny $\ell$}
                    
                    (8.5 , 0.5) node {\tiny $1$}
                    (9.55 , 0.5) node {\tiny $\cdots$}
                    (10.55 , 0.5) node {\tiny $k$}
                ;
                
                \fill[gray]
                    (8 , 0) -- (11 , 0) -- (11 , -1) -- (9 , -1) -- (9 , -4) -- (8 , -4) -- (8 , 0)
                ;
                \draw (8 , -4) -- (9 , -4) ;
                \draw (8 , -5) -- (9 , -5) ;
                \draw (8 , -4) -- (8 , -5) ;
                \draw (9 , -4) -- (9 , -5) ;
                \draw (8 , -6) -- (9 , -6) ;
                \draw (8 , -7) -- (9 , -7) ;
                \draw (8 , -8) -- (9 , -8) ;
                \draw (8 , -6) -- (8 , -8) ;
                \draw (9 , -6) -- (9 , -8) ;
                \draw (8.5 , -5.35) node {\tiny $\vdots$} ;
                \draw (9.55 , -1.5) node {\tiny $\cdots$} ;
                \draw (11.55 , -0.5) node {\tiny $\cdots$} ;
                \draw[blue , very thick] (8 , -4) -- (9 , -4) -- (9 , -6.95) -- (8 , -6.95) -- (8 , -4) ;
                \draw[red , very thick] (8 , -7.05) -- (9 , -7.05) -- (9 , -8) -- (8 , -8) -- (8 , -7.05) ;
                \draw (8.5 , -7.5) node {\scriptsize $\ell$} ;
            \end{tikzpicture}
        \end{figure}
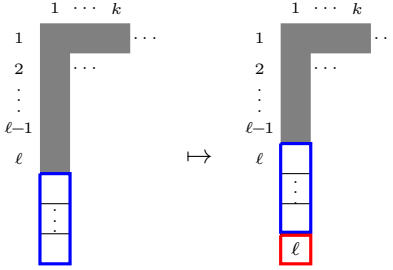
        
        \item \textbf{Case 1: } $\alpha_{\ell} = 1$.
        \begin{enumerate}[label=$\circ$]
            \item $T'$ is obtained by moving the first column of $T$ one cell upward,
            and then placing the integer $\ell$ in the last cell.
            
            \item Then (a) is
            clear as $\ell$ is larger than all other integers in $T$. For (b),
            the integer $\ell$ is added to the end of the reverse reading word of $T$,
            so the result remains a lattice permutation.
        \end{enumerate}
        
        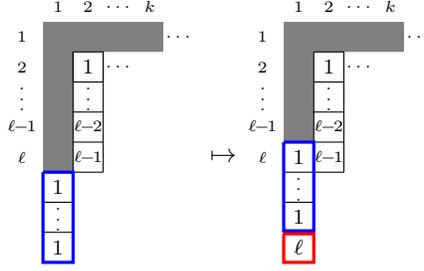
\begin{figure}
            \caption{The map $\LR(\alpha/\lambda , \lambda_{(1,\ell)}) \to \LR(\alpha/\lambda_{(\ell)} , \lambda_{(1)})$ in Case 2 \vspace{5    pt}}
            \label{figure:hook case 2}
            \begin{tikzpicture}[scale=0.4]
                \draw
                    (-0.7 , -0.5) node {\tiny $1$}
                    (-0.7 , -1.5) node {\tiny $2$}
                    (-0.7 , -2.3) node {\tiny $\vdots$}
                    (-0.7 , -3.5) node {\tiny $\ell \!\! - \!\! 1$}
                    (-0.7 , -4.5) node {\tiny $\ell$}
                    (0.5 , 0.5) node {\tiny $1$}
                    (1.5 , 0.5) node {\tiny $2$}
                    (2.55 , 0.5) node {\tiny $\cdots$}
                    (3.55 , 0.5) node {\tiny $k$}
                ;
                
                \fill[gray] (0 , 0) -- (4 , 0) -- (4 , -1) -- (1 , -1) -- (1 , -5) -- (0 , -5) -- (0 , 0) ;
                \draw (1 , -1) -- (2 , -1) -- (2 , -2) -- (1 , -2) -- (1 , -1) ;
                \draw (1 , -3) -- (2 , -3) ;
                \draw (1 , -4) -- (2 , -4) ;
                \draw (1 , -5) -- (2 , -5) ;
                \draw (1 , -1) -- (1 , -5) ;
                \draw (2 , -1) -- (2 , -5) ;
                \draw (4.55 , -0.5) node {\tiny $\cdots$} ;
                \draw (2.55 , -1.5) node {\tiny $\cdots$} ;
                \draw (1.5 , -2.3) node {\tiny $\vdots$} ;
                \draw (1.5 , -1.5) node {\footnotesize $1$} ;
                \draw (1.5 , -3.5) node {\tiny $\ell \!\! - \!\! 2$} ;
                \draw (1.5 , -4.5) node {\tiny $\ell \!\! - \!\! 1$} ;
                \draw (0 , -5) -- (1 , -5) -- (1 , -6) -- (0 , -6) -- (0 , -5) ;
                \draw (0 , -7) -- (1 , -7) ;
                \draw (0 , -8) -- (1 , -8) ;
                \draw (0 , -7) -- (0 , -8) ;
                \draw (1 , -7) -- (1 , -8) ;
                \draw (0.5 , -6.3) node {\tiny $\vdots$} ;
                \draw (0.5 , -5.5) node {\footnotesize $1$} ;
                \draw (0.5 , -7.5) node {\footnotesize $1$} ;
                \draw[blue , very thick] (0 , -5) -- (1 , -5) -- (1 , -8) -- (0 , -8) -- (0 , -5) ;
                
                \draw (6 , -4.5) node {$\mapsto$} ;
                
                \draw
                    (7.3 , -0.5) node {\tiny $1$}
                    (7.3 , -1.5) node {\tiny $2$}
                    (7.3 , -2.3) node {\tiny $\vdots$}
                    (7.3 , -3.5) node {\tiny $\ell \!\! - \!\! 1$}
                    (7.3 , -4.5) node {\tiny $\ell$}
                    (8.5 , 0.5) node {\tiny $1$}
                    (9.5 , 0.5) node {\tiny $2$}
                    (10.55 , 0.5) node {\tiny $\cdots$}
                    (11.55 , 0.5) node {\tiny $k$}
                ;
                
                \fill[gray] (8 , 0) -- (12 , 0) -- (12 , -1) -- (9 , -1) -- (9 , -4) -- (8 , -4) -- (8 , 0) ;
                
                \draw
                    (9 , -1) -- (10 , -1)
                    (9 , -2) -- (10 , -2)
                    (9 , -3) -- (10 , -3)
                    (9 , -4) -- (10 , -4)
                    (9 , -5) -- (10 , -5)
                    (9 , -1) -- (9 , -5)
                    (10 , -1) -- (10 , -5)
                ;
                \draw (12.55 , -0.5) node {\tiny $\cdots$} ;
                \draw (10.55 , -1.5) node {\tiny $\cdots$} ;
                \draw (9.5 , -2.3) node {\tiny $\vdots$} ;
                \draw (9.5 , -1.5) node {\footnotesize $1$} ;
                \draw (9.5 , -3.5) node {\tiny $\ell \!\! - \!\! 2$} ;
                \draw (9.5 , -4.5) node {\tiny $\ell \!\! - \!\! 1$} ;
                \draw (8 , -4) -- (9 , -4) ;
                \draw (8 , -5) -- (9 , -5) ;
                \draw (8 , -4) -- (8 , -5) ;
                \draw (9 , -4) -- (9 , -5) ;
                \draw (8 , -6) -- (9 , -6) ;
                \draw (8 , -7) -- (9 , -7) ;
                \draw (8 , -8) -- (9 , -8) ;
                \draw (8 , -6) -- (8 , -8) ;
                \draw (9 , -6) -- (9 , -8) ;
                \draw (8.5 , -5.3) node {\tiny $\vdots$} ;
                \draw (8.5 , -4.5) node {\footnotesize $1$} ;
                \draw (8.5 , -6.5) node {\footnotesize $1$} ;
                \draw[blue , very thick] (8 , -4) -- (9 , -4) -- (9 , -6.95) -- (8 , -6.95) -- (8 , -4) ;
                \draw[red , very thick] (8 , -7.05) -- (9 , -7.05) -- (9 , -8) -- (8 , -8) -- (8 , -7.05) ;
                \draw (8.5 , -7.5) node {\footnotesize $\ell$} ;
            \end{tikzpicture}
        \end{figure}
        
        \item \textbf{Case 2: } $\alpha_{\ell} = 2$ and $\alpha_{\ell + 1} = 1$.
        \begin{enumerate}[label=$\circ$]
            \item $T'$ is obtained by
            moving the first column of $T$ one cell upward,
            and then placing the integer $\ell$ in the last cell.
            
            \item Then (a) is clear and (b) is proved similar to Case 1.
        \end{enumerate}
        
        \begin{figure}
            \caption{The map $\LR(\alpha/\lambda , \lambda_{(1,\ell)}) \to \LR(\alpha/\lambda_{(\ell)} , \lambda_{(1)})$ in Case 3\vspace{5pt}}
            \label{figure:hook case 3}
            \begin{tikzpicture}[scale=0.4]
                \draw
                    (-0.7 , -0.5) node {\tiny $1$}
                    (-0.7 , -1.5) node {\tiny $2$}
                    (-0.7 , -2.5) node {\tiny $3$}
                    (-0.7 , -3.5) node {\tiny $4$}
                    (-0.7 , -4.3) node {\tiny $\vdots$}
                    (-0.7 , -5.5) node {\tiny $\ell$}
                    (0.5 , 0.5) node {\tiny $1$}
                    (1.5 , 0.5) node {\tiny $2$}
                    (2.55 , 0.5) node {\tiny $\cdots$}
                    (3.5 , 0.5) node {\tiny $\alpha_2$}
                    (4.55 , 0.5) node {\tiny $\cdots$}
                    (5.55 , 0.5) node {\tiny $k$}
                ;

                \fill[gray] (0 , 0) -- (6 , 0) -- (6 , -1) -- (1 , -1) -- (1 , -6) -- (0 , -6) -- (0 , 0) ;
                \draw (1 , -1) -- (4 , -1) ;
                \draw (1 , -2) -- (4 , -2) ;
                \draw (1 , -3) -- (2 , -3) ;
                \draw (1 , -4) -- (2 , -4) ;
                \draw (1 , -1) -- (1 , -4) ;
                \draw (2 , -1) -- (2 , -4) ;

                \draw (1 , -5) -- (2 , -5) ;
                \draw (1 , -6) -- (2 , -6) ;
                \draw (1 , -5) -- (1 , -6) ;
                \draw (2 , -5) -- (2 , -6) ;

                \draw (3 , -1) -- (4 , -1) ;
                \draw (3 , -2) -- (4 , -2) ;
                \draw (3 , -1) -- (3 , -2) ;
                \draw (4 , -1) -- (4 , -2) ;

                \draw (6 , 0) -- (7 , 0) ;
                \draw (6 , -1) -- (7 , -1) ;
                \draw (6 , 0) -- (6 , -1) ;
                \draw (7 , 0) -- (7 , -1) ;

                \draw (2.55 , -1.5) node {\tiny $\cdots$} ;
                \draw (7.55 , -0.5) node {\tiny $\cdots$} ;
                \draw (1.5 , -4.3) node {\tiny $\vdots$} ;
                \draw (1.5 , -1.5) node {\footnotesize $1$} ;
                \draw (1.5 , -2.5) node {\footnotesize $2$} ;
                \draw (1.5 , -3.5) node {\footnotesize $3$} ;
                \draw (1.5 , -5.5) node {\tiny $\ell \!\! - \!\! 1$} ;
                \draw (3.5 , -1.5) node {\footnotesize $1$} ;
                \draw (6.5 , -0.5) node {\footnotesize $1$} ;

                \draw[blue , very thick]
                    (1 , -2) -- (2 , -2) -- (2 , -3) -- (1 , -3) -- (1 , -2)
                    (1 , -3) -- (2 , -3) -- (2 , -6) -- (1 , -6) -- (1 , -3)
                    (3 , -1) -- (4 , -1) -- (4 , -2) -- (3 , -2) -- (3 , -1)
                    ;

                \draw (9 , -3) node {$\mapsto$} ;

                \draw
                    (10.3 , -0.5) node {\tiny $1$}
                    (10.3 , -1.5) node {\tiny $2$}
                    (10.3 , -2.5) node {\tiny $3$}
                    (10.3 , -3.3) node {\tiny $\vdots$}
                    (10.3 , -4.5) node {\tiny $\ell \!\! - \!\! 1$}
                    (10.3 , -5.5) node {\tiny $\ell$}
                    (11.5 , 0.5) node {\tiny $1$}
                    (12.5 , 0.5) node {\tiny $2$}
                    (13.55 , 0.5) node {\tiny $\cdots$}
                    (14.5 , 0.5) node {\tiny $\alpha_2$}
                    (15.55 , 0.5) node {\tiny $\cdots$}
                    (16.55 , 0.5) node {\tiny $k$}
                ;

                \fill[gray]
                    (11 , 0) -- (17 , 0) -- (17 , -1) -- (12 , -1) -- (12 , -5) -- (11 , -5) -- (11 , 0)
                ;

                \draw
                    (12 , -1) -- (15 , -1)
                    (12 , -2) -- (15 , -2)
                    (12 , -3) -- (13 , -3)
                    (12 , -1) -- (12 , -3)
                    (13 , -1) -- (13 , -3)

                    (12 , -4) -- (13 , -4)
                    (11 , -5) -- (13 , -5)
                    (11 , -6) -- (13 , -6)
                    (11 , -5) -- (11 , -6)
                    (12 , -4) -- (12 , -6)
                    (13 , -4) -- (13 , -6)

                    (14 , -1) -- (15 , -1)
                    (14 , -2) -- (15 , -2)
                    (14 , -1) -- (14 , -2)
                    (15 , -1) -- (15 , -2)

                    (17 , 0) -- (18 , 0)
                    (17 , -1) -- (18 , -1)
                    (17 , 0) -- (17 , -1)
                    (18 , 0) -- (18 , -1)
                ;

                \draw
                    (13.55 , -1.5) node {\tiny $\cdots$}
                    (18.55 , -0.5) node {\tiny $\cdots$}
                    (12.5 , -3.3) node {\tiny $\vdots$}

                    (12.5 , -1.5) node {\footnotesize $1$}
                    (12.5 , -2.5) node {\footnotesize $3$}
                    (12.5 , -4.5) node {\tiny $\ell \!\! - \!\! 1$}
                    (14.5 , -1.5) node {\footnotesize $2$}
                    (17.5 , -0.5) node {\footnotesize $1$}
                    (11.5 , -5.5) node {\footnotesize $1$}
                ;

                \draw
                    (14.5 , -1.5) node {\footnotesize $2$}
                    (11.5 , -5.5) node {\footnotesize $1$}
                ;
                
                \draw[blue , very thick]
                    (12 , -2) -- (13 , -2) -- (13 , -4.95) -- (12 , -4.95) -- (12 , -2)
                    (11 , -5) -- (11.95 , -5) -- (11.95 , -6) -- (11 , -6) -- (11 , -5)
                    (14 , -1) -- (15 , -1) -- (15 , -2) -- (14 , -2) -- (14 , -1)
                    ;
                \draw[red , very thick] (12.05 , -5.05) -- (13 , -5.05) -- (13 , -6) -- (12.05 , -6) -- (12.05 , -5.05) ;
                \draw (12.5 , -5.5) node {\footnotesize $\ell$} ;
            \end{tikzpicture}
        \end{figure}
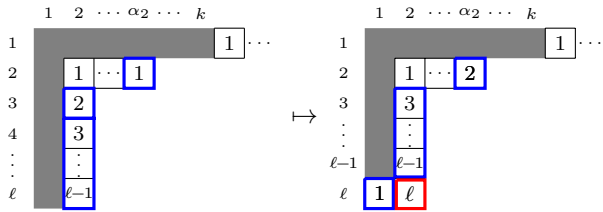
        
        \item \textbf{Case 3: } $\alpha_{\ell} = 2$ and $\alpha_{\ell + 1} = 0$, but $\alpha \neq (k^2 , 2^{\ell - 2})$.
        \begin{enumerate}[label=$\circ$]
            \item 
            Note first the first column of $T$ must contain $1 , 2 , \ldots , \ell - 1$ since it is strictly increasing.
            We define $T'$ by moving the last $1$ in the second row to the first cell of the $\ell$-th row,
            and move the $2$ to the end of the second row,
            move the $3 , \ldots , \ell - 1$ one cell upward,
            and place the integer $\ell$ in the last cell of the second column.
            
            \item Then (a) is clear.  For (b), 
            note that there must be $1$'s in the first row
            as $\alpha \neq (k^2 , 2^{\ell - 2})$.
            Thus, there is at least one $1$ in front of the $2$ in the reverse word of $T'$.
        \end{enumerate}
    \end{enumerate}    
\end{proof}

\begin{proof}[Proof of Lemma \ref{lemma:hook special}]
    
    \begin{figure}
        \caption{Skew shapes $\beta / \lambda$ and $\beta / \lambda_{(\ell)}$}
        \label{figure:hook skew shapes}
        \begin{subfigure}{0.4\textwidth}
            \subcaption{$\beta / \lambda = (k^2 , 2^{\ell - 2}) / (k , 1^{\ell - 1})$ \vspace{5pt}}
            \centering \begin{tikzpicture}[scale=0.4]
                \draw
                    (-0.7 , -0.5) node {\tiny $1$}
                    (-0.7 , -1.5) node {\tiny $2$}
                    (-0.7 , -2.5) node {\tiny $3$}
                    (-0.7 , -3.3) node {\tiny $\vdots$}
                    (-0.7 , -4.5) node {\tiny $\ell \!\! - \!\! 1$}
                    (-0.7 , -5.5) node {\scriptsize $\ell$}
                    (0.5 , 0.5) node {\tiny $1$}
                    (1.5 , 0.5) node {\tiny $2$}
                    (2.5 , 0.5) node {\tiny $3$}
                    (3.55 , 0.5) node {\tiny $\cdots$}
                    (4.55 , 0.5) node {\tiny $k \!\! - \!\! 1$}
                    (5.55 , 0.5) node {\tiny $k$}
                ;

                \fill[gray]
                    (0 , 0) -- (6 , 0) -- (6 , -1) -- (1 , -1) -- (1 , -6) -- (0 , -6) -- (0 , 0)
                ;
                
                \draw[gray]
                    (3.55 , -0.5) node {\tiny $\cdots$}
                    (0.5 , -3.3) node {\tiny $\vdots$}
                ;
                
                \draw
                    (1 , -1) -- (6 , -1)
                    (1 , -2) -- (6 , -2)
                    (1 , -3) -- (2 , -3)
                    (1 , -4) -- (2 , -4)
                    (1 , -5) -- (2 , -5)
                    (1 , -6) -- (2 , -6)
                    (1 , -1) -- (1 , -6)
                    (2 , -1) -- (2 , -6)
                    (3 , -1) -- (3 , -2)
                    (4 , -1) -- (4 , -2)
                    (5 , -1) -- (5 , -2)
                    (6 , -1) -- (6 , -2)
                ;
                
                \draw
                    (3.55 , -1.5) node {\tiny $\cdots$}
                    (1.5 , -3.3) node {\tiny $\vdots$}
                ;
            \end{tikzpicture}
        \end{subfigure}
        \begin{subfigure}{0.4\textwidth}
            \caption{$\beta / \lambda_{(\ell)} = (k^2 , 2^{\ell - 2}) / (k , 1^{\ell - 2})$ \vspace{5pt}}
            \centering \begin{tikzpicture}[scale=0.4]
                \draw
                    (-0.7 , -0.5) node {\tiny $1$}
                    (-0.7 , -1.5) node {\tiny $2$}
                    (-0.7 , -2.5) node {\tiny $3$}
                    (-0.7 , -3.3) node {\tiny $\vdots$}
                    (-0.7 , -4.5) node {\tiny $\ell \!\! - \!\! 1$}
                    (-0.7 , -5.5) node {\tiny $\ell$}
                    (0.5 , 0.5) node {\tiny $1$}
                    (1.5 , 0.5) node {\tiny $2$}
                    (2.5 , 0.5) node {\tiny $3$}
                    (3.55 , 0.5) node {\tiny $\cdots$}
                    (4.55 , 0.5) node {\tiny $k \!\! - \!\! 1$}
                    (5.55 , 0.5) node {\tiny $k$}
                ;

                \fill[gray]
                    (0 , 0) -- (6 , 0) -- (6 , -1) -- (1 , -1) -- (1 , -5) -- (0 , -5) -- (0 , 0)
                ;
                
                \draw
                    (1 , -1) -- (6 , -1)
                    (1 , -2) -- (6 , -2)
                    (1 , -3) -- (2 , -3)
                    (1 , -4) -- (2 , -4)
                    (0 , -5) -- (2 , -5)
                    (0 , -6) -- (2 , -6)
                    (0 , -5) -- (0 , -6)
                    (1 , -1) -- (1 , -6)
                    (2 , -1) -- (2 , -6)
                    (3 , -1) -- (3 , -2)
                    (4 , -1) -- (4 , -2)
                    (5 , -1) -- (5 , -2)
                    (6 , -1) -- (6 , -2)
                ;

                \draw
                    (3.55 , -1.5) node {\tiny $\cdots$}
                    (1.5 , -3.3) node {\tiny $\vdots$}
                ;
            \end{tikzpicture}
        \end{subfigure}
    \end{figure}
    
    We claim $c^{\beta}_{\lambda_{(1)} , \lambda_{(\ell)}} = c^{\beta}_{\lambda , \lambda_{(1,\leftarrow)}} = 0$
    by Lemma $\ref{lemma:badLRT}$(c).  In fact,  both skew shapes $\beta / \lambda_{(\ell)}$ and $\beta / \lambda$ have only $\ell - 1$ non-empty rows (see Figure \ref{figure:hook skew shapes})
    but $\ell(\lambda_{(1)}) = \ell(\lambda_{(1,\leftarrow)}) = \ell$,
    so there do not exist
    Littlewood-Richardson tableaux of shape $\beta/\lambda_{(\ell)}$ and content $\lambda_{(1)}$ or
    those of shape $\beta/\lambda$ and content $\lambda_{(1,\leftarrow)}$.
    
    Next we claim $c^{\beta}_{\lambda , \lambda_{(\ell,\uparrow)}} = 0$, since
    by Lemma $\ref{lemma:badLRT}$(a):
    the second column of $\beta / \lambda_{(\ell)}$ has $\ell - 1$ cells (see Figure \ref{figure:hook skew shapes})
    but $\ell(\lambda_{(\ell,\uparrow)}) = \ell - 2$,
    so there do not exist
    Littlewood-Richardson tableaux of shape $\beta/\lambda_{(\ell)}$ and content $\lambda_{(\ell,\uparrow)}$.
    
    \begin{figure}
        \captionsetup{width=280pt}
        \caption{Littlewood-Richardson tableaux in $\LR(\beta / \lambda_{(\ell)} , \lambda_{(\ell)})$, $\LR(\beta / \lambda_{(1)} , \lambda_{(1)})$, $\LR(\beta / \lambda , \lambda_{(1 , \ell)})$}
        \label{figure:hook special}
        \begin{subfigure}{0.3\textwidth}
            \subcaption{$T \in \LR(\beta / \lambda_{(\ell)} , \lambda_{(\ell)})$ \vspace{5pt}}
            \centering \begin{tikzpicture}[scale=0.4]
                \draw
                    (-0.7 , -0.5) node {\tiny $1$}
                    (-0.7 , -1.5) node {\tiny $2$}
                    (-0.7 , -2.5) node {\tiny $3$}
                    (-0.7 , -3.3) node {\tiny $\vdots$}
                    (-0.7 , -4.5) node {\tiny $\ell \!\! - \!\! 1$}
                    (-0.7 , -5.5) node {\tiny $\ell$}
                    (0.5 , 0.5) node {\tiny $1$}
                    (1.5 , 0.5) node {\tiny $2$}
                    (2.5 , 0.5) node {\tiny $3$}
                    (3.55 , 0.5) node {\tiny $\cdots$}
                    (4.55 , 0.5) node {\tiny $k \!\! - \!\! 1$}
                    (5.55 , 0.5) node {\tiny $k$}
                ;

                \fill[gray]
                    (0 , 0) -- (6 , 0) -- (6 , -1) -- (1 , -1) -- (1 , -5) -- (0 , -5) -- (0 , 0)
                ;

                \draw
                    (1 , -1) -- (6 , -1)
                    (1 , -2) -- (6 , -2)
                    (1 , -3) -- (2 , -3)
                    (1 , -4) -- (2 , -4)
                    (0 , -5) -- (2 , -5)
                    (0 , -6) -- (2 , -6)
                    (0 , -5) -- (0 , -6)
                    (1 , -1) -- (1 , -6)
                    (2 , -1) -- (2 , -6)
                    (3 , -1) -- (3 , -2)
                    (4 , -1) -- (4 , -2)
                    (5 , -1) -- (5 , -2)
                    (6 , -1) -- (6 , -2)
                ;

                \draw
                    (3.55 , -1.5) node {\tiny $\cdots$}
                    (1.5 , -3.3) node {\tiny $\vdots$}
                    (1.5 , -1.5) node {\footnotesize $1$}
                    (2.5 , -1.5) node {\footnotesize $1$}
                    (4.5 , -1.5) node {\footnotesize $1$}
                    (5.5 , -1.5) node {\footnotesize $1$}
                    (1.5 , -2.5) node {\footnotesize $2$}
                    (1.5 , -4.5) node {\tiny $\ell \!\! - \!\! 2$}
                    (1.5 , -5.5) node {\tiny $\ell \!\! - \!\! 1$}
                    (0.5 , -5.5) node {\footnotesize $1$}
                ;
            \end{tikzpicture}
        \end{subfigure}
        \begin{subfigure}{0.3\textwidth}
            \subcaption{$T \in \LR(\beta / \lambda_{(1)} , \lambda_{(1)})$ \vspace{5pt}}
            \centering \begin{tikzpicture}[scale=0.4]
                \draw
                    (-0.7 , -0.5) node {\tiny $1$}
                    (-0.7 , -1.5) node {\tiny $2$}
                    (-0.7 , -2.5) node {\tiny $3$}
                    (-0.7 , -3.3) node {\tiny $\vdots$}
                    (-0.7 , -4.5) node {\tiny $\ell \!\! - \!\! 1$}
                    (-0.7 , -5.5) node {\tiny $\ell$}
                    (0.5 , 0.5) node {\tiny $1$}
                    (1.5 , 0.5) node {\tiny $2$}
                    (2.5 , 0.5) node {\tiny $3$}
                    (3.55 , 0.5) node {\tiny $\cdots$}
                    (4.55 , 0.5) node {\tiny $k \!\! - \!\! 1$}
                    (5.55 , 0.5) node {\tiny $k$}
                ;

                \fill[gray]
                    (0 , 0) -- (5 , 0) -- (5 , -1) -- (1 , -1) -- (1 , -6) -- (0 , -6) -- (0 , 0)
                ;

                \draw
                    (5 , 0) -- (6 , 0)
                    (1 , -1) -- (6 , -1)
                    (1 , -2) -- (6 , -2)
                    (1 , -3) -- (2 , -3)
                    (1 , -4) -- (2 , -4)
                    (1 , -5) -- (2 , -5)
                    (1 , -6) -- (2 , -6)
                    (1 , -1) -- (1 , -6)
                    (2 , -1) -- (2 , -6)
                    (3 , -1) -- (3 , -2)
                    (4 , -1) -- (4 , -2)
                    (5 , 0) -- (5 , -2)
                    (6 , 0) -- (6 , -2)
                ;
                
                \draw
                    (3.55 , -1.5) node {\tiny $\cdots$}
                    (1.5 , -3.3) node {\tiny $\vdots$}
                    (1.5 , -1.5) node {\footnotesize $1$}
                    (2.5 , -1.5) node {\footnotesize $1$}
                    (4.5 , -1.5) node {\footnotesize $1$}
                    (5.5 , -0.5) node {\footnotesize $1$}
                    (5.5 , -1.5) node {\footnotesize $2$}
                    (1.5 , -2.5) node {\footnotesize $3$}
                    (1.5 , -4.5) node {\tiny $\ell \!\! - \!\! 1$}
                    (1.5 , -5.5) node {\footnotesize $\ell$}
                ;
            \end{tikzpicture}
        \end{subfigure}
        \begin{subfigure}{0.3\textwidth}
            \subcaption{$T \in \LR(\beta / \lambda , \lambda_{(1 , \ell)})$ \vspace{5pt}}
            \begin{tikzpicture}[scale=0.4]
                \draw
                    (-0.7 , -0.5) node {\tiny $1$}
                    (-0.7 , -1.5) node {\tiny $2$}
                    (-0.7 , -2.5) node {\tiny $3$}
                    (-0.7 , -3.3) node {\tiny $\vdots$}
                    (-0.7 , -4.5) node {\tiny $\ell \!\! - \!\! 1$}
                    (-0.7 , -5.5) node {\tiny $\ell$}
                    (0.5 , 0.5) node {\tiny $1$}
                    (1.5 , 0.5) node {\tiny $2$}
                    (2.5 , 0.5) node {\tiny $3$}
                    (3.55 , 0.5) node {\tiny $\cdots$}
                    (4.55 , 0.5) node {\tiny $k \!\! - \!\! 1$}
                    (5.55 , 0.5) node {\tiny $k$}
                ;

                \fill[gray]
                    (0 , 0) -- (6 , 0) -- (6 , -1) -- (1 , -1) -- (1 , -6) -- (0 , -6) -- (0 , 0)
                ;

                \draw
                    (1 , -1) -- (6 , -1)
                    (1 , -2) -- (6 , -2)
                    (1 , -3) -- (2 , -3)
                    (1 , -4) -- (2 , -4)
                    (1 , -5) -- (2 , -5)
                    (1 , -6) -- (2 , -6)
                    (1 , -1) -- (1 , -6)
                    (2 , -1) -- (2 , -6)
                    (3 , -1) -- (3 , -2)
                    (4 , -1) -- (4 , -2)
                    (5 , -1) -- (5 , -2)
                    (6 , -1) -- (6 , -2)
                ;

                \draw
                    (3.55 , -1.5) node {\tiny $\cdots$}
                    (1.5 , -3.3) node {\tiny $\vdots$}
                    (1.5 , -1.5) node {\footnotesize $1$}
                    (2.5 , -1.5) node {\footnotesize $1$}
                    (4.5 , -1.5) node {\footnotesize $1$}
                    (5.5 , -1.5) node {\footnotesize $1$}
                    (1.5 , -2.5) node {\footnotesize $2$}
                    (1.5 , -4.5) node {\tiny $\ell \!\! - \!\! 2$}
                    (1.5 , -5.5) node {\tiny $\ell \!\! - \!\! 1$}
                ;
            \end{tikzpicture}
        \end{subfigure}
    \end{figure}
    
    Finally, one may check that each of
    $\LR(\beta / \lambda_{(\ell)}$ , $\lambda_{(\ell)})$ ,
    $\LR(\beta / \lambda_{(1)} , \lambda_{(1)})$ and
    $\LR(\beta / \lambda , \lambda_{(1 , \ell)})$
    is a singleton,
    the only element of which is depicted in Figure \ref{figure:hook special}.
    Therefore, we have $c^{\beta}_{\lambda_{(\ell)} , \lambda_{(\ell)}} =
    c^{\beta}_{\lambda_{(1)} , \lambda_{(1)}} =
    c^{\beta}_{\lambda , \lambda_{(1,\ell)}} = 1$ which completes the proof.
\end{proof}

Finally, we deal with hooks $\lambda = (k , 1^{\ell - 1})$ with $k  = 2$ or $\ell = 2$ separately.

\begin{theorem} \label{theorem:hook2}
    Let $\lambda$ be a hook,
    i.e.\ a partition of the form $(k , 1^{\ell - 1})$, where $k  = 2$ or $\ell = 2$.
    Then $(s_{\lambda}^{(1)})^2 - s_{\lambda}s_{\lambda}^{(2)}$ is Schur positive.
\end{theorem}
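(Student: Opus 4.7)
The plan is to handle both sub-cases by the decomposition $(*)$ from Section 3, combined with two classical Jacobi--Trudi identities that isolate the single source of Schur negativity in the Type I bracket.

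For $\lambda = (k,1)$ with $k \geq 2$, the unique Type I bracket is $A := 2 s_{(k-1,1)} s_{(k)} - s_{(k,1)} s_{(k-1)}$. Expanding the $2 \times 2$ Jacobi--Trudi determinants in complete symmetric polynomials yields
\[
    s_{(k,1)} s_{(k-1)} - s_{(k-1,1)} s_{(k)} \;=\; h_k^2 - h_{k-1} h_{k+1} \;=\; s_{(k,k)},
\]
so $A = s_{(k-1,1)} s_{(k)} - s_{(k,k)}$. A quick Littlewood--Richardson check shows $c^{(k,k)}_{(k-1,1),(k)} = 0$ (the only candidate filling would force two $1$'s into column $k$), so $A$ has non-negative Schur coefficients except for a single $-1$ at $s_{(k,k)}$. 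Analogously, for $\lambda = (2,1^{\ell-1})$ with $\ell \geq 3$ (the case $\ell = 2$ coincides with $k = 2$), the dual Jacobi--Trudi identity in elementary symmetric polynomials gives
\[
    s_{(2,1^{\ell-1})} s_{(1^{\ell-1})} - s_{(1^\ell)} s_{(2,1^{\ell-2})} \;=\; e_\ell^2 - e_{\ell-1} e_{\ell+1} \;=\; s_{(2^\ell)},
\]
reducing the unique Type I bracket to $s_{(1^\ell)} s_{(2,1^{\ell-2})} - s_{(2^\ell)}$, with the unique $-1$ occurring at $s_{(2^\ell)}$.

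In both sub-cases, the Type II and Type III brackets in $(*)$ are Schur positive by Lemma \ref{lemma:type 2&3} (interpreting $s_{\lambda_{(j,\uparrow)}} = 0$ or $s_{\lambda_{(j,\leftarrow)}} = 0$ when the relevant partition does not exist). Therefore the only possibly-negative Schur coefficient in $(*)$ is that of $s_\beta$, where $\beta = (k,k)$ or $\beta = (2^\ell)$. Computing this coefficient by enumerating Littlewood--Richardson tableaux---each relevant LR number equals $0$ or $1$ and is determined by a unique lattice filling of a single-column-like skew shape---the coefficient of $s_\beta$ in $(*)$ evaluates to $k^2 + (n+k-1)(n-1)$ in the first sub-case and $\ell^2 + (n+1)(n-\ell+1)$ in the second. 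Both are positive for $n \geq \ell(\lambda)$, completing the proof.

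The main obstacle I anticipate is the bookkeeping in $(*)$: even when $\lambda_{(j,\uparrow)}$ or $\lambda_{(j,\leftarrow)}$ does not exist, the corresponding summand still contributes the full coefficient $\tfrac{1}{2}(n+\lambda_j-j)(n+\lambda_j-j \pm 1) \, s_{\lambda_{(j)}}^2$ positively, and these ``pure square'' contributions are precisely what compensate the $-1$ from the Type I bracket at the bad partition $\beta$.
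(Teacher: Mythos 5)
Your proposal is correct, and the final numbers agree with the paper's: your $k^2+(n+k-1)(n-1)$ is exactly the paper's $(n-1)^2-(n-1)(n+k-1)+(n+k-1)^2$, and similarly $\ell^2+(n+1)(n+1-\ell)=(n+1)^2-(n+1)(n+1-\ell)+(n+1-\ell)^2$. The route differs in a worthwhile way from the paper's. The paper proves Theorem \ref{theorem:hook2} by direct computation in three cases ($\lambda=(2,1)$, $\lambda=(2,1^{\ell-1})$ with $\ell\geq 3$, and $\lambda=(k,1)$ with $k\geq 3$): it applies Lemma \ref{lemma:type 2&3} to absorb one product and then writes out the remaining products $s_{(1^{\ell})}^2$, $s_{(1^{\ell})}s_{(2,1^{\ell-2})}$, $s_{\lambda}s_{(1^{\ell-1})}$ (and their conjugates) via Pieri-type expansions, locating the single problematic coefficient ($s_{(k,k)}$, respectively none in the $(2,1^{\ell-1})$ case after its particular grouping) by inspection. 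You instead stay entirely inside the decomposition (\ref{equation:main}) and identify the negativity of the unique Type I bracket in closed form via the $2\times 2$ Jacobi--Trudi determinants $h_k^2-h_{k-1}h_{k+1}=s_{(k,k)}$ and $e_{\ell}^2-e_{\ell-1}e_{\ell+1}=s_{(2^{\ell})}$, together with the one vanishing Littlewood--Richardson number $c^{(k,k)}_{(k-1,1),(k)}=0$ (and its conjugate); this handles both subcases uniformly, avoids the explicit Pieri bookkeeping, and in fact mirrors the strategy used for the main hook case (Lemmas \ref{lemma:hook type 1} and \ref{lemma:hook special}): pin down the unique bad partition, then evaluate its total coefficient in (\ref{equation:main}). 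Your closing observation is also the right one to make explicit: when $\lambda_{(j,\uparrow)}$ or $\lambda_{(j,\leftarrow)}$ does not exist, the corresponding summand in (\ref{equation:main}) still contributes $\tfrac{1}{2}(n+\lambda_j-j)(n+\lambda_j-j\pm 1)\,s_{\lambda_{(j)}}^2$, and these pure squares (summing to $(n+\lambda_j-j)^2 s_{\lambda_{(j)}}^2$) are what make the bad coefficient a positive quadratic form $a^2-ab+b^2=(a-b)^2+ab$.
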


\begin{proof}
    We will prove this theorem by direct computation.
    The main fact that we will use in the computation is that the product of $s_{\lambda}$ and $s_{(1^\ell)}$ equals the sum of Schur polynomials $s_{\mu}$ corresponding to all partitions $\mu$ that can be obtained by adding one block each to $\ell$ rows of $\lambda$.
    \begin{itemize}
        \item
        \textbf{Case 1:} $\lambda = (2 , 1)$. \\
        By (\ref{equation:derived Schur}), we have
        \begin{align*}
            & s^{(1)}_{\lambda} = (n + 1) s_{(1,1)} + (n - 1) s_{2} \\
            & s^{(2)}_{\lambda} = (n + 1) (n - 1) s_{1} \, .
        \end{align*}
        Thus,
        \begin{align*}
            & (s_{\lambda}^{(1)})^2 - s_{\lambda}s_{\lambda}^{(2)} \\
            = \, & (n + 1)^2 (s_{(2,2)} + s_{(2,1,1)} + s_{(1,1,1,1)}) \\
            & + 2 (n + 1) (n - 1) (s_{(3,1)} + s_{(2,1,1)}) \\
            & + (n - 1)^2 (s_{(2,2)} + s_{(3,1)} + s_{(4)}) \\
            & - (n + 1) (n - 1) (s_{(3,1)} + s_{(2,2)} + s_{(2,1,1)}) \, .
        \end{align*}
        It is easy to check that this polynomial is Schur positive.

        \item
        \textbf{Case 2:} $\lambda = (2 , 1^{\ell - 1})$ with $\ell \geq 3$. \\
        By (\ref{equation:derived Schur}), we have
        \begin{align*}
            & s^{(1)}_{\lambda} = (n + 1) s_{(1^{\ell})} + (n + 1 - \ell) s_{(2,1^{\ell-2})} \\
            & s^{(2)}_{\lambda} = (n + 1) (n + 1 - \ell) s_{(1^{\ell - 1})} + \frac{1}{2} (n + 1 - \ell) (n + 2 - \ell) s_{(2,1^{\ell-3})} \, .
        \end{align*}
        Thus,
        \begin{align*}
            & (s_{\lambda}^{(1)})^2 - s_{\lambda}s_{\lambda}^{(2)} = \\
            & (n + 1)^2 s_{(1^{\ell})}^2 + 2 (n + 1) (n + 1 - \ell) s_{(1^{\ell})} s_{(2,1^{\ell-2})} + (n + 1 - \ell)^2 s_{(2,1^{\ell-2})}^2 \\
            & - (n + 1) (n + 1 - \ell) s_{\lambda} s_{(1^{\ell - 1})} - \frac{1}{2} (n + 1 - \ell) (n + 2 - \ell) s_{\lambda} s_{(2,1^{\ell-3})} \, .
        \end{align*}
        By Lemma \ref{lemma:type 2&3}, we have that $s_{(2,1^{\ell-2})}^2 - s_{\lambda} s_{(2,1^{\ell-3})}$ is Schur positive, which implies that
        \[
            (n + 1 - \ell)^2 s_{(2,1^{\ell-2})}^2 - \frac{1}{2} (n + 1 - \ell) (n + 2 - \ell) s_{\lambda} s_{(2,1^{\ell-3})}
        \]
        is Schur positive.
        The sum of the other three terms in $(s_{\lambda}^{(1)})^2 - s_{\lambda}s_{\lambda}^{(2)}$ is also Schur-positive by the following.
        \begin{align*}
            s_{(1^{\ell})}^2 &
            = s_{(2^{\ell})} + s_{(2^{\ell-1},1^2)} + \cdots + s_{(2,1^{2\ell-2})} + s_{(1^{2\ell})} \\
            s_{(1^{\ell})} s_{(2,1^{\ell-2})} &
            = s_{(3,2^{\ell-2},1)} + s_{(3,2^{\ell-3},1^3)} + \cdots + s_{(3,1^{2\ell-3})} \\ &
            \quad + s_{(2^{\ell-1},1^2)} + \cdots + s_{(2,1^{2\ell-2})} \\
            s_{\lambda} s_{(1^{\ell - 1})} &
            = s_{(3,2^{\ell-2},1)} + s_{(3,2^{\ell-3},1^3)} + \cdots + s_{(3,1^{2\ell-3})} \\ &
            \quad + s_{(2^{\ell})} + s_{(2^{\ell-1},1,1)} + \cdots + s_{(2,1^{2\ell-2})}
        \end{align*}
        Therefore, $(s_{\lambda}^{(1)})^2 - s_{\lambda}s_{\lambda}^{(2)}$ is Schur positive.

        \item
        \textbf{Case 3:} $\lambda = (k , 1)$ with $k \geq 3$. \\
        By (\ref{equation:derived Schur}), we have
        \begin{align*}
            & s^{(1)}_{\lambda} = (n - 1) s_{(k)} + (n + k - 1) s_{(k-1,1)} \\
            & s^{(2)}_{\lambda} = (n - 1) (n + k - 1) s_{(k-1)} + \frac{1}{2} (n + k - 1) (n + k - 2) s_{(k-2,1)} \, .
        \end{align*}
        Thus,
        \begin{align*}
            & (s_{\lambda}^{(1)})^2 - s_{\lambda}s_{\lambda}^{(2)} = \\
            & (n - 1)^2 s_{(k)}^2 + 2 (n - 1) (n + k - 1) s_{(k)} s_{(k-1,1)} + (n + k - 1)^2 s_{(k-1,1)}^2 \\
            & - (n - 1) (n + k - 1) s_{\lambda} s_{(k-1)} - \frac{1}{2} (n + k - 1) (n + k - 2) s_{\lambda} s_{(k-2,1)} \, .
        \end{align*}
        By Lemma \ref{lemma:type 2&3}, we have that $s_{(k-1,1)}^2 - s_{\lambda} s_{(k-2,1)}$ is Schur-positive, which implies that
        \[
            (n + k - 1)^2 s_{(k-1,1)}^2 - \frac{1}{2} (n + k - 1) (n + k - 2) s_{\lambda} s_{(k-2,1)}
        \]
        is Schur positive.
        The sum of the other three terms in $(s_{\lambda}^{(1)})^2 - s_{\lambda}s_{\lambda}^{(2)}$ is almost Schur positive by the following, which is obtained by taking conjugate partitions in the previous case.
        \begin{align*}
            s_{(k)}^2 &
            = s_{(k,k)} + s_{(k+1,k-1)} + \cdots + s_{(2k-1,1)} + s_{(2k)} \\
            s_{(k)} s_{(k-1,1)} &
            = s_{(k,k-1,1)} + s_{(k+1,k-2,1)} + \cdots + s_{(2k-2,1,1)} \\ &
            \quad + s_{(k+1,k-1)} + \cdots + s_{(2k-1,1)} \\
            s_{\lambda} s_{(k-1)} &
            = s_{(k,k-1,1)} + s_{(k+1,k-2,1)} + \cdots + s_{(2k-2,1,1)} \\ &
            \quad + s_{(k,k)} + s_{(k+1,k-1)} + \cdots + s_{(2k-1,1)}
        \end{align*}
        The only problematic coefficient is that of $s_{(k,k)}$, but note that $c^{(k,k)}_{(k-1,1),(k-1,1)} = 1$ and $c^{(k,k)}_{\lambda,(k-2,1)} = 0$.
        Thus, the coefficient of $s_{(k,k)}$ in $(s_{\lambda}^{(1)})^2 - s_{\lambda}s_{\lambda}^{(2)}$ is given by
        \[
            (n - 1)^2 - (n - 1)(n + k - 1) + (n + k - 1)^2 \, ,
        \]
        which is positive.
        Therefore, $(s_{\lambda}^{(1)})^2 - s_{\lambda}s_{\lambda}^{(2)}$ is Schur positive.
    \end{itemize}
\end{proof}
\section{Partitions of the form $(k , k , 1)$}

Fix a positive integers $k \geq 3$
and consider the partition $\lambda = (k , k , 1)$.
Let $n \geq \ell(\lambda) = 3$ be the number of variables.
The goal of this section is to prove the following.

\begin{theorem} \label{theorem:kk1}
    Fix a positive integers $k \geq 3$,
    and let $\lambda= (k , k , 1)$.
    Then $(s_{\lambda}^{(1)})^2 - s_{\lambda}s_{\lambda}^{(2)}$ is Schur positive.
\end{theorem}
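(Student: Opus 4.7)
The plan is to follow the same template used in the hook case (Theorem \ref{theorem:hook}). For $\lambda = (k,k,1)$ with $k \ge 3$ we have $J(\lambda) = \{2,3\}$, so the only Type I pair is $(j,k) = (2,3)$ with $\lambda_{(2)} = (k,k-1,1)$, $\lambda_{(3)} = (k,k)$, $\lambda_{(2,3)} = (k,k-1)$. Moreover $\lambda_{(2,\uparrow)} = (k-1,k-1,1)$ exists (since $\lambda_1 = \lambda_2$), $\lambda_{(2,\leftarrow)} = (k,k-2,1)$ exists for $k \ge 3$, while $\lambda_{(3,\uparrow)}$ and $\lambda_{(3,\leftarrow)}$ do not exist. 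Plugging into (\ref{equation:main}) gives
\[
    (s_\lambda^{(1)})^2 - s_\lambda s_\lambda^{(2)}
    = (n+k-2)(n-2)\bigl(2 s_{\lambda_{(2)}} s_{\lambda_{(3)}} - s_\lambda s_{\lambda_{(2,3)}}\bigr) + R,
\]
where $R$ collects the Type II and Type III contributions, which are Schur positive by Lemma \ref{lemma:type 2&3}. Thus the only possible negative coefficients come from the Type I term.

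First I would prove an analogue of Lemma \ref{lemma:hook type 1}, namely that
\[
    2\, c^{\alpha}_{\lambda_{(2)},\lambda_{(3)}} \;\ge\; c^{\alpha}_{\lambda,\lambda_{(2,3)}}
\]
for all partitions $\alpha$ of $4k$ except a small explicit exceptional set $\mathcal{E}$. The strategy is to exploit Lemma \ref{lemma:badLRT}: since both $\lambda_{(3)} = (k,k)$ and $\lambda_{(2,3)} = (k,k-1)$ have length $2$, any column of the relevant skew shapes with more than two cells kills the coefficient, forcing $\ell(\alpha) \le 3$ or at worst some very controlled $\alpha_4, \alpha_5$. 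For each surviving shape I would build an explicit (at most) $2$-to-$1$ map $\LR(\alpha/\lambda, \lambda_{(2,3)}) \to \LR(\alpha/\lambda_{(2)}, \lambda_{(3)})$ by moving the unique cell corresponding to the removed position in row $3$ of $\lambda$ and rebalancing the fillings, splitting into cases according to the relative sizes of $\alpha_1,\alpha_2,\alpha_3,\alpha_4$ in the spirit of Cases 1--3 of Lemma \ref{lemma:hook type 1}. The factor of $2$ gives extra room over the hook proof: when the natural map is not injective, there is a second pre-image obtained by toggling which of two tied cells on a common row is transferred, handled by pairing.

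The second ingredient is an analogue of Lemma \ref{lemma:hook special}: for each $\alpha \in \mathcal{E}$ I would compute all six Littlewood-Richardson numbers
\[
    c^{\alpha}_{\lambda_{(2)},\lambda_{(3)}},\; c^{\alpha}_{\lambda_{(2)},\lambda_{(2)}},\; c^{\alpha}_{\lambda_{(3)},\lambda_{(3)}},\; c^{\alpha}_{\lambda,\lambda_{(2,3)}},\; c^{\alpha}_{\lambda,\lambda_{(2,\uparrow)}},\; c^{\alpha}_{\lambda,\lambda_{(2,\leftarrow)}}
\]
directly from the combinatorics of LR tableaux and then assemble the total coefficient of $s_\alpha$ in (\ref{equation:main}). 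The claim is that the positive contributions from the Type II and Type III terms, with prefactors $\tfrac12(n+k-2)(n+k-1)$ and $\tfrac12(n+k-2)(n+k-3)$, always dominate the negative Type I contribution with prefactor $(n+k-2)(n-2)$, giving a polynomial in $n$ with non-negative coefficients, analogous to the final computation $(n+k-1)(k+\ell-2) + (n+1-\ell)^2$ in the proof of Theorem \ref{theorem:hook}.

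The main obstacle is the combinatorial case analysis needed to identify $\mathcal{E}$ and to define the $2$-to-$1$ map uniformly. Unlike the hook case, $\lambda = (k,k,1)$ has a non-trivial row of length $k$ stacked on another row of the same length, which produces more freedom in the LR fillings of $\alpha/\lambda$ and more potential exceptional shapes. I expect $\mathcal{E}$ to consist of a small number of ``corner'' shapes such as $(k+1, k+1, k-1, 1)$ or $(k,k,k,k)$ where the skew shape degenerates, and the hardest step will be verifying the map is well-defined and at most $2$-to-$1$ precisely outside $\mathcal{E}$ by a careful analysis of the reverse reading word and the lattice condition.
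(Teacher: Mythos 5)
Your outline reproduces the paper's strategy for $(k,k,1)$ almost verbatim: reduce via (\ref{equation:main}) to the single Type I term $2s_{\lambda_{(2)}}s_{\lambda_{(3)}} - s_{\lambda}s_{\lambda_{(2,3)}}$ with prefactor $(n+k-2)(n-2)$, prove $2c^{\alpha}_{\lambda_{(2)},\lambda_{(3)}} \geq c^{\alpha}_{\lambda,\lambda_{(2,3)}}$ outside an exceptional set by an at most $2$-to-$1$ map on Littlewood--Richardson tableaux, and then check the exceptional coefficients by hand against the Type II/III contributions. This is exactly what the paper does in Lemmas \ref{lemma:kk1 type 1} and \ref{lemma:kk1 special}, so there is no divergence of method to discuss.

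The problem is that, as written, the two steps that constitute essentially all of the work are only announced, not carried out. You never construct the $2$-to-$1$ map: the phrase about ``toggling which of two tied cells on a common row is transferred, handled by pairing'' is not a definition, and in fact the actual mechanism in the paper is different (the map is built by case analysis on $\alpha_4,\alpha_5$ and on the entry in the first cell of the fourth row; the failure of injectivity comes from two of those cases landing on the same tableau, not from a toggle within one case). Likewise the exceptional set $\mathcal{E}$ is left undetermined: you guess ``shapes such as $(k+1,k+1,k-1,1)$ or $(k,k,k,k)$,'' but $(k+1,k+1,k-1,1)$ is not exceptional, and you miss $(2k,k,k)$, which is; without pinning down $\mathcal{E}$ exactly you cannot run the final computation, since for each $\alpha\in\mathcal{E}$ you must actually evaluate the six coefficients $c^{\alpha}_{\lambda_{(2)},\lambda_{(3)}}, c^{\alpha}_{\lambda_{(2)},\lambda_{(2)}}, c^{\alpha}_{\lambda_{(3)},\lambda_{(3)}}, c^{\alpha}_{\lambda,\lambda_{(2,3)}}, c^{\alpha}_{\lambda,\lambda_{(2,\uparrow)}}, c^{\alpha}_{\lambda,\lambda_{(2,\leftarrow)}}$ (the paper finds $(0,1,1,1,0,0)$ for both exceptional shapes, giving the coefficient $(n+k-2)k+(n-2)^2>0$). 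Your closing claim that the Type II/III prefactors ``always dominate'' is exactly what needs proof at the exceptional shapes, and it depends on those coefficient values. So the proposal is the right plan, in the same spirit as the hook case of Theorem \ref{theorem:hook}, but the combinatorial core --- the explicit map with its verification of the semistandardness and lattice-word conditions in each case, and the determination of $\mathcal{E}=\{(2k,k,k),(k^4)\}$ with the attendant coefficient computations --- is missing, and that is where all the difficulty of the theorem lies.
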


There are two partitions of $|\lambda| - 1$ contained in $\lambda$,
given by
$\lambda_{(2)} = (k , k - 1 , 1)$ and $\lambda_{(3)} = (k , k)$,
and three partitions of $|\lambda| - 2$ contained in $\lambda$,
given by
$\lambda_{(2,3)} = (k , k - 1)$ and $\lambda_{(2,\uparrow)} = (k - 1 , k - 1 , 1)$ and
$\lambda_{(2,\leftarrow)} = (k , k - 2 , 1)$.
The theorem follows from the following two lemmas.

\begin{lemma} \label{lemma:kk1 type 1}
    Let $\alpha$ be a partition of $2|\lambda| - 2 = 4k$,
    containing $\lambda$ and not equal to $(2k , k , k)$ or $(k^4)$.
    Then we have $2 c^{\alpha}_{\lambda_{(2)} , \lambda_{(3)}} \geq c^{\alpha}_{\lambda , \lambda_{(2,3)}}$.
\end{lemma}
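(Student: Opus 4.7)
\emph{Plan of proof.} The approach parallels that of Lemma~\ref{lemma:hook type 1}: we plan to exhibit an at-most $2$-to-$1$ map
\[
    \phi \colon \LR(\alpha / \lambda,\; \lambda_{(2,3)}) \;\longrightarrow\; \LR(\alpha / \lambda_{(3)},\; \lambda_{(2)}),
\]
whose existence yields the desired inequality $2\,c^{\alpha}_{\lambda_{(2)},\lambda_{(3)}} \ge c^{\alpha}_{\lambda,\lambda_{(2,3)}}$ via $c^{\alpha}_{\lambda_{(2)},\lambda_{(3)}} = c^{\alpha}_{\lambda_{(3)},\lambda_{(2)}} = |\LR(\alpha/\lambda_{(3)}, \lambda_{(2)})|$.

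The first step is to use Lemma~\ref{lemma:badLRT} to cut the problem down to a manageable finite list of shapes. Because $\lambda_{(2,3)} = (k, k-1)$ has length two, any skew shape $\alpha/\lambda$ supporting a Littlewood-Richardson tableau of that content has at most two non-empty rows, every column of width at most two, and first non-empty row with at most $k$ cells. Combined with $\alpha \supseteq \lambda$ and $|\alpha| = 4k$, this narrows $\alpha$ to a short menu of shape families organized by which of $\alpha_1, \alpha_2, \alpha_3, \alpha_4$ strictly exceed the corresponding part of $\lambda$.

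For each remaining $\alpha$, I would define $\phi(T)$ by adjoining the single extra cell $(3,1)$ of $\alpha/\lambda_{(3)}$, placing a $3$ there, and, when this conflicts with existing entries of $T$ in row $3$ or row $4$, performing a local rearrangement analogous to Case~3 of the proof of Lemma~\ref{lemma:hook type 1}. A short case-by-case check then verifies that $\phi(T)$ satisfies the row, column, and lattice-permutation conditions of Definition~\ref{definition:Littlewood-Richardson tableaux}. In some shape families the construction will be injective; in others two natural preimages can collapse to the same image, and the factor of two on the left of the inequality is precisely the budget that allows this.

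The main obstacle will be isolating exactly where a valid slide exists. I expect that the two excluded partitions $(2k,k,k)$ and $(k^4)$ are the only rigid configurations in which no slide produces a lattice permutation; these will then be treated in a companion lemma (analogous to Lemma~\ref{lemma:hook special}) by directly enumerating Littlewood-Richardson tableaux and substituting the resulting counts into~(\ref{equation:main}) to confirm that the coefficient of $s_\alpha$ in $(s_{\lambda}^{(1)})^2 - s_{\lambda} s_{\lambda}^{(2)}$ is non-negative.
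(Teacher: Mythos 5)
Your overall strategy is the paper's: reduce the set of shapes $\alpha$ via Lemma \ref{lemma:badLRT}, then build an at most $2$-to-$1$ map $\LR(\alpha/\lambda , \lambda_{(2,3)}) \to \LR(\alpha/\lambda_{(3)} , \lambda_{(2)})$, with the two excluded shapes handled by a companion enumeration (this is exactly Lemma \ref{lemma:kk1 special} and the proof of Theorem \ref{theorem:kk1}). But as written there are two genuine problems. First, your reduction step is incorrect: Lemma \ref{lemma:badLRT} does \emph{not} say that a shape supporting content of length two has at most two non-empty rows --- part (c) gives a lower bound on the number of non-empty rows, not an upper bound. The correct consequences of $\ell(\lambda_{(2,3)})=2$ and $\lambda_{(2,3),1}=k$ are column bounds and a first-row bound, namely $\alpha_1 \le 2k$, $\alpha_3 \le k$, $\alpha_5 \le 1$, $\alpha_6 = 0$ (the paper's Lemma \ref{lemma:kk1 LRT}); shapes with three, four, or even five non-empty rows do occur (e.g.\ $\alpha=(4,3,3,2)$ for $k=3$ supports two tableaux of content $(3,2)$), and these are precisely the configurations where all the real work lies. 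Your ``menu'' would wrongly discard them.

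Second, the core of the lemma is deferred rather than proved. Your proposed construction --- put a $3$ in the cell $(3,1)$ of $\alpha/\lambda_{(3)}$ and ``rearrange locally if needed'' --- fails outright whenever $\alpha_4>0$, since the cell below would then need an entry exceeding $3$ while the content $\lambda_{(2)}=(k,k-1,1)$ only uses $1,2,3$; so the rearrangement is not a patch but the entire argument. What is missing is exactly what the paper supplies: an explicit case division ($\alpha_4=0$; $\alpha_5=1$; $\alpha_4>0,\ \alpha_5=0$ with three subcases governed by the first entry of row $4$ and a count of $2$'s versus $1$'s), a verification of column-strictness and of the lattice condition for each case --- note the hypotheses $\alpha \neq (2k,k,k)$ and $\alpha \neq (k^4)$ are used \emph{inside} these verifications to guarantee a $2$ precedes the added $3$ in the reading word, not only to set aside exceptional shapes --- and a concrete reason the map is at most $2$-to-$1$ (in the paper: the image determines the case up to the single collision between the subcases that place the $3$ in the fourth row and the one that places it in the third row). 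Saying ``a short case-by-case check then verifies'' and ``I expect the two excluded partitions are the only rigid configurations'' leaves the entire content of the lemma unestablished.
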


\begin{lemma} \label{lemma:kk1 special}
    Let $\beta_1 = (2k , k , k)$ and $\beta_2 = (k^4)$.
    Then we have, for $i = 1 , 2$,
    \[
        \Big(
        c^{\beta_i}_{\lambda_{(2)} , \lambda_{(3)}} ,
        c^{\beta_i}_{\lambda_{(2)} , \lambda_{(2)}} ,
        c^{\beta_i}_{\lambda_{(3)} , \lambda_{(3)}} ,
        c^{\beta_i}_{\lambda , \lambda_{(2,3)}} ,
        c^{\beta_i}_{\lambda , \lambda_{(2,\uparrow)}} ,
        c^{\beta_i}_{\lambda , \lambda_{(2,\leftarrow)}}
        \Big)
        =
        (0 , 1 , 1 , 1 , 0 , 0) \, .
    \]
\end{lemma}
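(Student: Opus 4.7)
The proof proceeds in the same spirit as Lemma \ref{lemma:hook special}: we compute the six relevant Littlewood-Richardson coefficients by direct inspection of the corresponding skew shapes, using Lemma \ref{lemma:badLRT} to rule out the existence of tableaux where possible, and otherwise exhibiting the unique Littlewood-Richardson tableau. The skew shapes needed are $\beta_i/\lambda$ (which has only two non-empty rows in each case), $\beta_i/\lambda_{(2)}$ (three non-empty rows, with a single cell in row $2$ or cell $(2,k)$ sitting above a column of two cells), and $\beta_i/\lambda_{(3)}$ (two non-empty rows of $k$ cells each).

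Since $\ell(\lambda_{(2,\uparrow)}) = \ell(\lambda_{(2,\leftarrow)}) = 3$, while the skew shape $\beta_1/\lambda$ has only two non-empty rows (rows $1$ and $3$) and likewise $\beta_2/\lambda$ has only two non-empty rows (rows $3$ and $4$), part (c) of Lemma \ref{lemma:badLRT} immediately yields $c^{\beta_i}_{\lambda, \lambda_{(2,\uparrow)}} = c^{\beta_i}_{\lambda, \lambda_{(2,\leftarrow)}} = 0$ for $i=1,2$. For the vanishing of $c^{\beta_2}_{\lambda_{(2)}, \lambda_{(3)}}$, column $k$ of $\beta_2/\lambda_{(2)}$ contains three cells (in rows $2,3,4$) while $\ell(\lambda_{(3)}) = 2$, so part (a) of Lemma \ref{lemma:badLRT} applies. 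The vanishing of $c^{\beta_1}_{\lambda_{(2)}, \lambda_{(3)}}$ requires a small direct argument: in $\beta_1/\lambda_{(2)}$ the first non-empty row is row $1$ with $k$ cells, which for content $(k,k)$ must be filled entirely with $1$'s; the remaining $k$ cells in rows $2$ and $3$ are then forced to all be $2$'s, but this makes the entries at $(2,k)$ and $(3,k)$ both equal to $2$, violating strict increase down column $k$.

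Finally, the three coefficients equal to $1$ are verified by exhibiting the unique Littlewood-Richardson tableau in each case. For $c^{\beta_i}_{\lambda_{(3)},\lambda_{(3)}}$, content $(k,k)$ together with the row-restriction lemma forces the first non-empty row to be all $1$'s and the second to be all $2$'s, and column strictness is trivial. For $c^{\beta_i}_{\lambda, \lambda_{(2,3)}}$, content $(k, k-1)$ likewise forces the first non-empty row to be all $1$'s; for $\beta_1$ the second non-empty row then becomes all $2$'s, while for $\beta_2$ column strictness in columns $2, \ldots, k$ forces row $4$ to read $1, 2, 2, \ldots, 2$. For $c^{\beta_i}_{\lambda_{(2)}, \lambda_{(2)}}$ with content $(k, k-1, 1)$, the unique $3$ must appear in the third non-empty row (by the general lemma stated immediately after Lemma \ref{lemma:badLRT}), and weak increase along that row forces it into the rightmost cell; the residual $1$'s and $2$'s are then determined uniquely by row and column constraints. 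The main task is the careful case analysis rather than any new idea, so I expect no conceptual obstacle beyond those already handled in Lemma \ref{lemma:hook special}.
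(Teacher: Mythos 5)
Your proposal is correct and follows essentially the same route as the paper: ruling out the vanishing coefficients via Lemma \ref{lemma:badLRT} and exhibiting the unique (forced) Littlewood--Richardson tableau for each coefficient equal to $1$, exactly as in Lemma \ref{lemma:hook special}. The only cosmetic difference is that for $c^{\beta_i}_{\lambda_{(2)},\lambda_{(3)}}$ you work with the shape $\beta_i/\lambda_{(2)}$ (needing part (a) for $\beta_2$ and a short column-conflict argument for $\beta_1$), whereas the paper uses the shape $\beta_i/\lambda_{(3)}$ with content $\lambda_{(2)}$ so that part (c) of Lemma \ref{lemma:badLRT} disposes of all three vanishing coefficients at once; both arguments are valid.
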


\begin{proof}[Proof of Theorem \ref{theorem:kk1}]
    By Lemma \ref{lemma:kk1 type 1} and equation (\ref{equation:main}),
    the only possibly negative coefficients in the Schur expansion of $(s_{\lambda}^{(1)})^2 - s_{\lambda}s_{\lambda}^{(2)}$ are those of $s_{(2k , k , k)}$ and $s_{(k^4)}$.
    However, by Lemma \ref{lemma:kk1 special} and equation (\ref{equation:main}),
    both these coefficients are equal to
    \begin{align*}
        &
            - (n + k - 2)(n - 2)
            + \frac{1}{2}(n + k - 2)(n + k - 1)
            + \frac{1}{2}(n - 2)(n - 1)
        \\
        &
            + \frac{1}{2}(n + k - 2)(n + k - 3)
            + \frac{1}{2}(n - 2)(n - 3)
        \\
            = \,
        &
            - (n + k - 2)(n - 2)
            + (n + k - 2)^2
            + (n - 2)^2
        \\
            = \,
        &
            (n + k - 2)k
            + (n - 2)^2 \, ,
    \end{align*}
    which is positive.
\end{proof}

Before proving Lemma \ref{lemma:kk1 type 1},
we observe that it is obviously true when 
$c^{\alpha}_{\lambda , \lambda_{(2,3)}} = 0$, which occurs in the following case:

\begin{lemma} \label{lemma:kk1 LRT}
    Let $\alpha = (\alpha_i)_{i = 1}^{\infty}$ be a partition of $2|\lambda| - 2$
    with $\alpha_1 > 2k$, $\alpha_3 > k$, $\alpha_5 > 1$, or $\alpha_6 > 0$.
    Then we have $c^{\alpha}_{\lambda , \lambda_{(2,3)}} = 0$.
\end{lemma}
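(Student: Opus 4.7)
The plan is to handle each of the four conditions directly via Lemma \ref{lemma:badLRT}, applied to the content $\nu = \lambda_{(2,3)} = (k, k-1)$, which has length $\ell(\nu) = 2$ and first part $\nu_1 = k$. Part (a) of that lemma forces $c^{\alpha}_{\lambda,\lambda_{(2,3)}} = 0$ as soon as some column of $\alpha/\lambda$ contains more than $2$ cells, and part (b) yields the same conclusion as soon as the first non-empty row of $\alpha/\lambda$ has more than $k$ cells. We may assume $\lambda \subseteq \alpha$ throughout, since otherwise the Littlewood-Richardson coefficient vanishes automatically.

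The case $\alpha_1 > 2k$ is immediate from part (b): row $1$ of $\alpha/\lambda$ is then non-empty and contains $\alpha_1 - k > k$ cells. Each of the remaining three hypotheses exhibits a column of $\alpha/\lambda$ with at least three cells, so part (a) applies. If $\alpha_3 > k$, monotonicity forces $\alpha_1, \alpha_2, \alpha_3 \geq k+1$; since $\lambda_1 = \lambda_2 = k$ and $\lambda_3 = 1$ place no cell of $\lambda$ in column $k+1$, column $k+1$ of $\alpha/\lambda$ contains three cells drawn from rows $1, 2, 3$. If $\alpha_5 > 1$, then $\alpha_3, \alpha_4, \alpha_5 \geq 2$, while $\lambda$ occupies column $2$ only in its first two rows, so column $2$ of $\alpha/\lambda$ contains at least three cells, in rows $3, 4, 5$. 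If $\alpha_6 > 0$, then $\alpha_4, \alpha_5, \alpha_6 \geq 1$ while $\ell(\lambda) = 3$, so column $1$ of $\alpha/\lambda$ contains at least three cells in rows $4, 5, 6$.

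The argument is essentially bookkeeping and there is no substantive obstacle; the four numerical thresholds in the hypothesis are precisely calibrated so that each forces $\alpha/\lambda$ to violate an elementary criterion of Lemma \ref{lemma:badLRT}. The only thing one must confirm in each case is that the cells identified in $\alpha$ genuinely lie outside $\lambda$, which is verified by comparing with $\lambda = (k, k, 1)$ column by column as above.
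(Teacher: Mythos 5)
Your proposal is correct and follows essentially the same route as the paper: the case $\alpha_1 > 2k$ is handled by Lemma \ref{lemma:badLRT}(b) since $\lambda_{(2,3),1} = k$, and each of the other three hypotheses produces a column of $\alpha/\lambda$ with at least three cells, contradicting $\ell(\lambda_{(2,3)}) = 2$ via Lemma \ref{lemma:badLRT}(a). Your explicit column-by-column verification matches what the paper conveys through its figure, so there is nothing to add.
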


\begin{proof}
    \begin{figure}
        \caption{Positions where $\alpha$ cannot have cells in order for $c^{\alpha}_{\lambda , \lambda_{(2,3)}} > 0$\vspace{5pt}}
        \label{figure:lem 5.4}
        \begin{tikzpicture}[scale=0.4]
            \draw
                (-0.7 , -0.5) node {\tiny $1$}
                (-0.7 , -1.5) node {\tiny $2$}
                (-0.7 , -2.5) node {\tiny $3$}
                (-0.7 , -3.5) node {\tiny $4$}
                (-0.7 , -4.5) node {\tiny $5$}
                (-0.7 , -5.5) node {\tiny $6$}
                (0.5 , 0.5) node {\tiny $1$}
                (1.5 , 0.5) node {\tiny $2$}
                (2.55 , 0.5) node {\tiny $\cdots$}
                (3.55 , 0.5) node {\tiny $k$}
                (4.55 , 0.5) node {\tiny $k \!\! + \!\! 1$}
                (6.05 , 0.5) node {\tiny $\cdots$}
                (7.55 , 0.5) node {\tiny $2k$}
            ;

            \fill[gray]
                (0 , 0) -- (4 , 0) -- (4 , -2) -- (1 , -2) -- (1 , -3) -- (0 , -3) -- (0 , 0)
            ;

            \draw[red , thick]
                (8 , 0) -- (9 , 0) -- (9 , -1) -- (8 , -1) -- (8 , 0)
                (8 , 0) -- (9 , -1)
                (9 , 0) -- (8 , -1)
                (4 , -2) -- (5 , -2) -- (5 , -3) -- (4 , -3) -- (4 , -2)
                (4 , -2) -- (5 , -3)
                (4 , -3) -- (5 , -2)
                (1 , -4) -- (2 , -4) -- (2 , -5) -- (1 , -5) -- (1 , -4)
                (1 , -4) -- (2 , -5)
                (2 , -4) -- (1 , -5)
                (0 , -5) -- (1 , -5) -- (1 , -6) -- (0 , -6) -- (0 , -5)
                (0 , -5) -- (1 , -6)
                (1 , -5) -- (0 , -6)
            ;
            
            \draw[dashed]
                (0 , -3) -- (1 , -3) -- (1 , -2) -- (4 , -2) -- (4 , -4) -- (1 , -4) -- (1 , -5) -- (0 , -5) -- (0 , -3)
                (4 , 0) -- (8 , 0) -- (8 , -2) -- (4 , -2) -- (4 , 0)
            ;
        \end{tikzpicture}
    \end{figure}
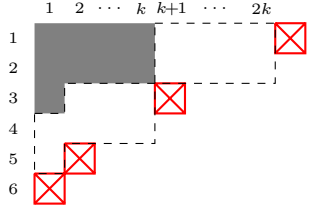
    
    If $\alpha_1 > 2k$,
    then there are more than $k$ cells in the first (non-empty) row of $\alpha / \lambda$
    (see Figure \ref{figure:lem 5.4}).
    However,
    we have $\lambda_{(2,3),1} = k$.
    Then, it follows from Lemma \ref{lemma:badLRT}(b) that $c^{\alpha}_{\lambda , \lambda_{(2,3)}} = 0$.
    
    The other three conditions would imply that
    there is a column in $\alpha / \lambda$ with at least $3$ cells
    (see Figure \ref{figure:lem 5.4}).
    However, we have $\ell(\lambda_{(2,3)}) = 2$.
    Thus, Lemma \ref{lemma:badLRT}(a) implies that $c^{\alpha}_{\lambda , \lambda_{(2,3)}} = 0$.
\end{proof}

\begin{proof}[Proof of Lemma \ref{lemma:kk1 type 1}]
    For each partition $\alpha$ of $2|\lambda| - 2$,
    containing $\lambda$ and not equal to $(2k , k , k)$ or $(k^4)$,
    in order to prove the inequality $2c^{\alpha}_{\lambda_{(2)} , \lambda_{(3)}} \geq c^{\alpha}_{\lambda , \lambda_{(2,3)}}$,
    we construct an at most 2-to-1 map from $\LR(\alpha/\lambda , \lambda_{(2,3)})$,
    the set of Littlewood-Richardson tableaux of shape $\alpha/\lambda$ and content $\lambda_{(2,3)}$,
    to $\LR(\alpha/\lambda_{(3)} , \lambda_{(2)})$,
    the set of Littlewood-Richardson tableaux of shape $\alpha/\lambda_{(3)}$ and content $\lambda_{(2)}$.
    
    By Lemma \ref{lemma:kk1 LRT},
    it suffices to consider $\alpha = (\alpha_i)_{i = 1}^{\infty}$ with $\alpha_1 \leq 2k$, $\alpha_3 \leq k$, $\alpha_5 \leq 1$ and $\alpha_6 = 0$.
    We divide into $3$ different cases:
    (1) $\alpha_4 = 0$ (2)  
    $\alpha_5 = 1$ and (3) $\alpha_4 > 0$ and $\alpha_5 = 0$.
    In each case,
    we give an algorithm that turns a Littlewood-Richardson tableau $T \in \LR(\alpha/\lambda , \lambda_{(2,3)})$
    into a Littlewood-Richardson tableau $T' \in \LR(\alpha/\lambda_{(3)} , \lambda_{(2)})$
    by adding an extra cell to the beginning of the third row and an extra integer $3$ to the content,
    and we check that $T'$ satisfies both conditions (a) and (b) in Definition \ref{definition:Littlewood-Richardson tableaux}.
    
    \begin{enumerate}[label=$\bullet$]
        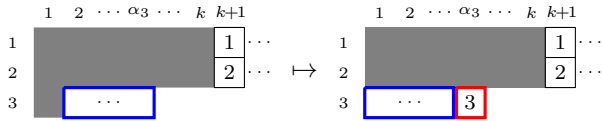
\begin{figure}
            \caption{The map $\LR(\alpha/\lambda , \lambda_{(2 , 3)}) \to \LR(\alpha/\lambda_{(3)} , \lambda_{(2)})$ in Case 1 \vspace{5pt}}
            \label{figure:kk1 case 1}
            \begin{tikzpicture}[scale=0.4]
                \draw
                    (-0.7 , -0.5) node {\tiny $1$}
                    (-0.7 , -1.5) node {\tiny $2$}
                    (-0.7 , -2.5) node {\tiny $3$}
                    (0.5 , 0.5) node {\tiny $1$}
                    (1.5 , 0.5) node {\tiny $2$}
                    (2.55 , 0.5) node {\tiny $\cdots$}
                    (3.5 , 0.5) node {\tiny $\alpha_3$}
                    (4.55 , 0.5) node {\tiny $\cdots$}
                    (5.55 , 0.5) node {\tiny $k$}
                    (6.55 , 0.5) node {\tiny $k \!\! + \!\! 1$}
                ;

                \fill[gray]
                    (0 , 0) -- (6 , 0) -- (6 , -2) -- (1 , -2) -- (1 , -3) -- (0 , -3) -- (0 , 0)
                ;

                \draw
                    (1 , -2) -- (4 , -2)
                    (1 , -3) -- (4 , -3)
                    (1 , -2) -- (1 , -3)
                    (4 , -2) -- (4 , -3)

                    (6 , 0) -- (7 , 0)
                    (6 , -1) -- (7 , -1)
                    (6 , -2) -- (7 , -2)
                    (6 , 0) -- (6 , -2)
                    (7 , 0) -- (7 , -2)

                    (7.55 , -0.5) node {\tiny $\cdots$}
                    (7.55 , -1.5) node {\tiny $\cdots$}
                    (2.55 , -2.5) node {\tiny $\cdots$}
                    (6.5 , -0.5) node {\footnotesize $1$}
                    (6.5 , -1.5) node {\footnotesize $2$}
                ;

                \draw[blue , very thick] (1 , -2) -- (4 , -2) -- (4 , -3) -- (1 , -3) -- (1 , -2) ;

                \draw (9 , -1.5) node {$\mapsto$} ;

                \draw
                    (10.3 , -0.5) node {\tiny $1$}
                    (10.3 , -1.5) node {\tiny $2$}
                    (10.3 , -2.5) node {\tiny $3$}
                    (11.5 , 0.5) node {\tiny $1$}
                    (12.5 , 0.5) node {\tiny $2$}
                    (13.55 , 0.5) node {\tiny $\cdots$}
                    (14.5 , 0.5) node {\tiny $\alpha_3$}
                    (15.55 , 0.5) node {\tiny $\cdots$}
                    (16.55 , 0.5) node {\tiny $k$}
                    (17.55 , 0.5) node {\tiny $k \!\! + \!\! 1$}
                ;

                \fill[gray]
                    (11 , 0) -- (17 , 0) -- (17 , -2) -- (11 , -2) -- (11 , 0)
                ;

                \draw
                    (11 , -2) -- (15 , -2)
                    (11 , -3) -- (15 , -3)
                    (11 , -2) -- (11 , -3)
                    (14 , -2) -- (14 , -3)
                    (15 , -2) -- (15 , -3)

                    (17 , 0) -- (18 , 0)
                    (17 , -1) -- (18 , -1)
                    (17 , -2) -- (18 , -2)
                    (17 , 0) -- (17 , -2)
                    (18 , 0) -- (18 , -2)

                    (18.55 , -0.5) node {\tiny $\cdots$}
                    (18.55 , -1.5) node {\tiny $\cdots$}
                    (12.55 , -2.5) node {\tiny $\cdots$}
                    (17.5 , -0.5) node {\footnotesize $1$}
                    (17.5 , -1.5) node {\footnotesize $2$}
                ;

                \draw[blue , very thick] (11 , -2) -- (13.95 , -2) -- (13.95 , -3) -- (11 , -3) -- (11 , -2) ;
                \draw[red , very thick] (14.05 , -2) -- (15 , -2) -- (15 , -3) -- (14.05 , -3) -- (14.05 , -2) ;
                \draw (14.5 , -2.5) node {\footnotesize $3$} ;
            \end{tikzpicture}
        \end{figure}
        
        \item \textbf{Case 1: } $\alpha_{4} = 0$, but $\alpha \neq (2k , k , k)$.
        \begin{enumerate}[label=$\circ$]
            \item $T'$ is obtained by 
            moving the third row of $T$ left by one cell,
            and placing the integer $3$ in the last cell of the third row.
            
            \item Then (a) is clear as $3$ is larger than all other integers in $T$.  For (b),note that there must be a $2$ in the second row as $\alpha \neq (2k , k , k)$.
            Thus, in the reverse word of $T'$,
            there is at least a $2$ in front of the added $3$.
        \end{enumerate}
        
        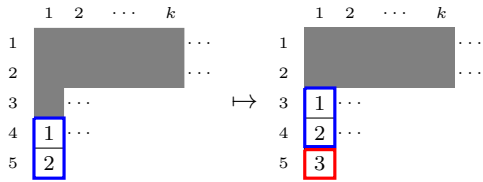
\begin{figure}
            \caption{The map $\LR(\alpha/\lambda , \lambda_{(2 , 3)}) \to \LR(\alpha/\lambda_{(3)} , \lambda_{(2)})$ in Case 2}
            \label{figure:kk1 case2}
            \begin{tikzpicture}[scale=0.4]
                \draw
                    (-0.7 , -0.5) node {\tiny $1$}
                    (-0.7 , -1.5) node {\tiny $2$}
                    (-0.7 , -2.5) node {\tiny $3$}
                    (-0.7 , -3.5) node {\tiny $4$}
                    (-0.7 , -4.5) node {\tiny $5$}
                    (0.5 , 0.5) node {\tiny $1$}
                    (1.5 , 0.5) node {\tiny $2$}
                    (3.05 , 0.5) node {\tiny $\cdots$}
                    (4.55 , 0.5) node {\tiny $k$}
                ;

                \fill[gray]
                    (0 , 0) -- (5 , 0) -- (5 , -2) -- (1 , -2) -- (1 , -3) -- (0 , -3) -- (0 , 0)
                ;

                \draw
                    (0 , -3) -- (1 , -3)
                    (0 , -4) -- (1 , -4)
                    (0 , -5) -- (1 , -5)
                    (0 , -3) -- (0 , -5)
                    (1 , -3) -- (1 , -5)

                    (1.55 , -2.5) node {\tiny $\cdots$}
                    (1.55 , -3.5) node {\tiny $\cdots$}
                    (5.55 , -0.5) node {\tiny $\cdots$}
                    (5.55 , -1.5) node {\tiny $\cdots$}
                    (0.5 , -3.5) node {\footnotesize $1$}
                    (0.5 , -4.5) node {\footnotesize $2$}
                ;

                \draw[blue , very thick] (0 , -3) -- (1 , -3) -- (1 , -5) -- (0 , -5) -- (0 , -3) ;

                \draw (7 , -2.5) node {$\mapsto$} ;

                \draw
                    (8.3 , -0.5) node {\tiny $1$}
                    (8.3 , -1.5) node {\tiny $2$}
                    (8.3 , -2.5) node {\tiny $3$}
                    (8.3 , -3.5) node {\tiny $4$}
                    (8.3 , -4.5) node {\tiny $5$}
                    (9.5 , 0.5) node {\tiny $1$}
                    (10.5 , 0.5) node {\tiny $2$}
                    (12.05 , 0.5) node {\tiny $\cdots$}
                    (13.55 , 0.5) node {\tiny $k$}
                ;

                \fill[gray]
                    (9 , 0) -- (14 , 0) -- (14 , -2) -- (9 , -2) -- (9 , 0)
                ;

                \draw
                    (9 , -2) -- (10 , -2)
                    (9 , -3) -- (10 , -3)
                    (9 , -4) -- (10 , -4)
                    (9 , -5) -- (10 , -5)
                    (9 , -2) -- (9 , -5)
                    (10 , -2) -- (10 , -5)

                    (10.55 , -2.5) node {\tiny $\cdots$}
                    (10.55 , -3.5) node {\tiny $\cdots$}
                    (14.55 , -0.5) node {\tiny $\cdots$}
                    (14.55 , -1.5) node {\tiny $\cdots$}
                    (9.5 , -2.5) node {\footnotesize $1$}
                    (9.5 , -3.5) node {\footnotesize $2$}
                ;

                \draw[blue , very thick] (9 , -2) -- (10 , -2) -- (10 , -3.95) -- (9 , -3.95) -- (9 , -2) ;
                \draw[red , very thick] (9 , -4.05) -- (10 , -4.05) -- (10 , -5) -- (9 , -5) -- (9 , -4.05) ;
                \draw (9.5 , -4.5) node {\footnotesize $3$} ;
            \end{tikzpicture}
        \end{figure}

        \item \textbf{Case 2: $\alpha_{5} = 1$.}
        \begin{enumerate}[label=$\circ$]
            \item 
            Note that the first column of $T$ must contain $1$ and $2$.
            Then $T'$ is obtained by moving each them one cell upward,
            and then placing the integer $3$ in the last cell of the first column.
            
            \item For (a) note first that the third row remains increasing as $1$ is the smallest integer in $T$. Moreover, the fourth row remains increasing, as all cells behind the first cell must contain $2$. For (b),            the integer $3$ is added to the end of the reverse reading word of $T$,
            so the result remains a lattice permutation.
        \end{enumerate}

        \item \textbf{Case 3: } $\alpha_{4} > 0$ and $\alpha_{5} = 0$, but $\alpha \neq (k^4)$.
        We further break this case down into three sub-cases.
        \begin{enumerate}[label=$-$]
            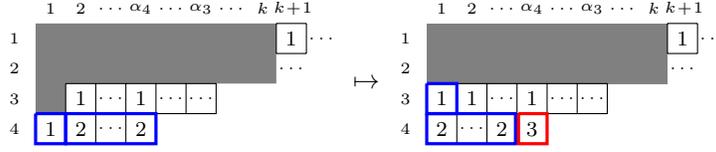
\begin{figure}
                \caption{The map $\LR(\alpha/\lambda , \lambda_{(2 , 3)}) \to \LR(\alpha/\lambda_{(3)} , \lambda_{(2)})$ in Case 3.1 \vspace{5pt}}
                \label{figure:kk1 case 3.1}
                \begin{tikzpicture}[scale=0.4]
                    \draw
                        (-0.7 , -0.5) node {\tiny $1$}
                        (-0.7 , -1.5) node {\tiny $2$}
                        (-0.7 , -2.5) node {\tiny $3$}
                        (-0.7 , -3.5) node {\tiny $4$}
                        (0.5 , 0.5) node {\tiny $1$}
                        (1.5 , 0.5) node {\tiny $2$}
                        (2.55 , 0.5) node {\tiny $\cdots$}
                        (3.5 , 0.5) node {\tiny $\alpha_4$}
                        (4.55 , 0.5) node {\tiny $\cdots$}
                        (5.5 , 0.5) node {\tiny $\alpha_3$}
                        (6.55 , 0.5) node {\tiny $\cdots$}
                        (7.55 , 0.5) node {\tiny $k$}
                        (8.55 , 0.5) node {\tiny $k \! + \! 1$}
                    ;

                    \fill[gray]
                        (0 , 0) -- (8 , 0) -- (8 , -2) -- (1 , -2) -- (1 , -3) -- (0 , -3) -- (0 , 0)
                    ;

                    \draw
                        (1 , -2) -- (6 , -2)
                        (0 , -3) -- (6 , -3)
                        (0 , -4) -- (4 , -4)
                        (0 , -3) -- (0 , -4)
                        (1 , -2) -- (1 , -4)
                        (2 , -2) -- (2 , -4)
                        (3 , -2) -- (3 , -4)
                        (4 , -2) -- (4 , -4)
                        (5 , -2) -- (5 , -3)
                        (6 , -2) -- (6 , -3)

                        (8 , 0) -- (9 , 0)
                        (8 , -1) -- (9 , -1)
                        (8 , 0) -- (8 , -1)
                        (9 , 0) -- (9 , -1)

                        (9.55 , -0.5) node {\tiny $\cdots$}
                        (8.55 , -1.5) node {\tiny $\cdots$}
                        (2.55 , -2.5) node {\tiny $\cdots$}
                        (4.55 , -2.5) node {\tiny $\cdots$}
                        (5.55 , -2.5) node {\tiny $\cdots$}
                        (2.55 , -3.5) node {\tiny $\cdots$}
                        (8.5 , -0.5) node {\footnotesize $1$}
                        (1.5 , -2.5) node {\footnotesize $1$}
                        (3.5 , -2.5) node {\footnotesize $1$}
                        (1.5 , -3.5) node {\footnotesize $2$}
                        (3.5 , -3.5) node {\footnotesize $2$}
                    ;

                    \draw (0.5 , -3.5) node {\footnotesize $1$} ;
                    \draw[blue , very thick] (0 , -3) -- (1 , -3) -- (1 , -4) -- (0 , -4) -- (0 , -3) ;
                    \draw[blue , very thick] (1 , -3) -- (4 , -3) -- (4 , -4) -- (1 , -4) -- (1 , -3) ;

                    \draw (11 , -2) node {$\mapsto$} ;

                    \draw
                        (12.3 , -0.5) node {\tiny $1$}
                        (12.3 , -1.5) node {\tiny $2$}
                        (12.3 , -2.5) node {\tiny $3$}
                        (12.3 , -3.5) node {\tiny $4$}
                        (13.5 , 0.5) node {\tiny $1$}
                        (14.5 , 0.5) node {\tiny $2$}
                        (15.55 , 0.5) node {\tiny $\cdots$}
                        (16.5 , 0.5) node {\tiny $\alpha_4$}
                        (17.55 , 0.5) node {\tiny $\cdots$}
                        (18.5 , 0.5) node {\tiny $\alpha_3$}
                        (19.55 , 0.5) node {\tiny $\cdots$}
                        (20.55 , 0.5) node {\tiny $k$}
                        (21.55 , 0.5) node {\tiny $k \! + \! 1$}
                    ;

                    \fill[gray]
                        (13 , 0) -- (21 , 0) -- (21 , -2) -- (13 , -2) -- (13 , 0)
                    ;

                    \draw
                        (13 , -2) -- (19 , -2)
                        (13 , -3) -- (19 , -3)
                        (13 , -4) -- (17 , -4)
                        (13 , -2) -- (13 , -4)
                        (14 , -2) -- (14 , -4)
                        (15 , -2) -- (15 , -4)
                        (16 , -2) -- (16 , -4)
                        (17 , -2) -- (17 , -4)
                        (18 , -2) -- (18 , -3)
                        (19 , -2) -- (19 , -3)

                        (21 , 0) -- (22 , 0)
                        (21 , -1) -- (22 , -1)
                        (21 , 0) -- (21 , -1)
                        (22 , 0) -- (22 , -1)

                        (22.55 , -0.5) node {\tiny $\cdots$}
                        (21.55 , -1.5) node {\tiny $\cdots$}
                        (15.55 , -2.5) node {\tiny $\cdots$}
                        (17.55 , -2.5) node {\tiny $\cdots$}
                        (18.55 , -2.5) node {\tiny $\cdots$}
                        (14.55 , -3.5) node {\tiny $\cdots$}
                        (21.5 , -0.5) node {\footnotesize $1$}
                        (14.5 , -2.5) node {\footnotesize $1$}
                        (16.5 , -2.5) node {\footnotesize $1$}
                        (13.5 , -3.5) node {\footnotesize $2$}
                        (15.5 , -3.5) node {\footnotesize $2$}
                    ;

                    \draw (13.5 , -2.5) node {\footnotesize $1$} ;
                    \draw[blue , very thick] (13 , -2) -- (14 , -2) -- (14 , -3) -- (13 , -3) -- (13 , -2) ;
                    \draw[blue , very thick] (13 , -3) -- (15.95 , -3) -- (15.95 , -4) -- (13 , -4) -- (13 , -3) ;
                    \draw[red , very thick] (16.05 , -3) -- (17 , -3) -- (17 , -4) -- (16.05 , -4) -- (16.05 , -3) ;
                    \draw (16.5 , -3.5) node {\footnotesize $3$} ;
                \end{tikzpicture}
            \end{figure}
            
            \item \textbf{Case 3.1:} The first cell of the fourth row contains $1$.
            \begin{enumerate}[label=$\circ$]
                \item $T'$ is obtained by
                moving the $1$ in the first cell of the fourth row one cell upward,
                and moving the rest of the fourth row left by one cell,
                and placing the integer $3$ in the last cell of the fourth row.
                
                \item Condition (a) is then clear.  For (b), note that there is at least one $2$ in the third row,
                since $\alpha$ would have to be $(k^4)$ for all $k - 1$ of $2$'s to be in the fourth row.
                Thus, in the reverse word of $T'$, there is at least one $2$ in front of the added $3$.
            \end{enumerate}
            
            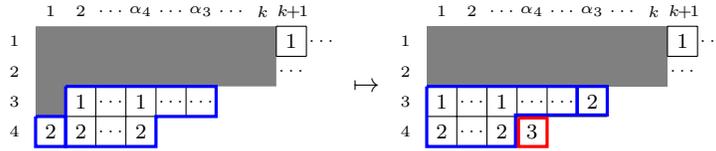
\begin{figure}
                \caption{The map $\LR(\alpha/\lambda , \lambda_{(2 , 3)}) \to \LR(\alpha/\lambda_{(3)} , \lambda_{(2)})$ in Case 3.2 \vspace{5pt}}
                \label{figure:kk1 case 3.2}
                \begin{tikzpicture}[scale=0.4]
                    \draw
                        (-0.7 , -0.5) node {\tiny $1$}
                        (-0.7 , -1.5) node {\tiny $2$}
                        (-0.7 , -2.5) node {\tiny $3$}
                        (-0.7 , -3.5) node {\tiny $4$}
                        (0.5 , 0.5) node {\tiny $1$}
                        (1.5 , 0.5) node {\tiny $2$}
                        (2.55 , 0.5) node {\tiny $\cdots$}
                        (3.5 , 0.5) node {\tiny $\alpha_4$}
                        (4.55 , 0.5) node {\tiny $\cdots$}
                        (5.5 , 0.5) node {\tiny $\alpha_3$}
                        (6.55 , 0.5) node {\tiny $\cdots$}
                        (7.55 , 0.5) node {\tiny $k$}
                        (8.55 , 0.5) node {\tiny $k \!\! + \!\! 1$}
                    ;

                    \fill[gray]
                        (0 , 0) -- (8 , 0) -- (8 , -2) -- (1 , -2) -- (1 , -3) -- (0 , -3) -- (0 , 0)
                    ;

                    \draw
                        (1 , -2) -- (6 , -2)
                        (0 , -3) -- (6 , -3)
                        (0 , -4) -- (4 , -4)
                        (0 , -3) -- (0 , -4)
                        (1 , -2) -- (1 , -4)
                        (2 , -2) -- (2 , -4)
                        (3 , -2) -- (3 , -4)
                        (4 , -2) -- (4 , -4)
                        (5 , -2) -- (5 , -3)
                        (6 , -2) -- (6 , -3)

                        (8 , 0) -- (9 , 0)
                        (8 , -1) -- (9 , -1)
                        (8 , 0) -- (8 , -1)
                        (9 , 0) -- (9 , -1)

                        (9.55 , -0.5) node {\tiny $\cdots$}
                        (8.55 , -1.5) node {\tiny $\cdots$}
                        (2.55 , -2.5) node {\tiny $\cdots$}
                        (4.55 , -2.5) node {\tiny $\cdots$}
                        (5.55 , -2.5) node {\tiny $\cdots$}
                        (2.55 , -3.5) node {\tiny $\cdots$}
                        (8.5 , -0.5) node {\footnotesize $1$}
                        (1.5 , -2.5) node {\footnotesize $1$}
                        (3.5 , -2.5) node {\footnotesize $1$}
                        (1.5 , -3.5) node {\footnotesize $2$}
                        (3.5 , -3.5) node {\footnotesize $2$}
                    ;

                    \draw (0.5 , -3.5) node {\footnotesize $2$} ;
                    \draw[blue , very thick] (0 , -3) -- (1 , -3) -- (1 , -4) -- (0 , -4) -- (0 , -3) ;
                    \draw[blue , very thick] (1 , -2) -- (6 , -2) -- (6 , -3) -- (4 , -3) -- (4 , -4) -- (1 , -4) -- (1 , -2) ;
                    \draw (11 , -2) node {$\mapsto$} ;

                    \draw
                        (12.3 , -0.5) node {\tiny $1$}
                        (12.3 , -1.5) node {\tiny $2$}
                        (12.3 , -2.5) node {\tiny $3$}
                        (12.3 , -3.5) node {\tiny $4$}
                        (13.5 , 0.5) node {\tiny $1$}
                        (14.5 , 0.5) node {\tiny $2$}
                        (15.55 , 0.5) node {\tiny $\cdots$}
                        (16.5 , 0.5) node {\tiny $\alpha_4$}
                        (17.55 , 0.5) node {\tiny $\cdots$}
                        (18.5 , 0.5) node {\tiny $\alpha_3$}
                        (19.55 , 0.5) node {\tiny $\cdots$}
                        (20.55 , 0.5) node {\tiny $k$}
                        (21.55 , 0.5) node {\tiny $k \!\! + \!\! 1$}
                    ;

                    \fill[gray]
                        (13 , 0) -- (21 , 0) -- (21 , -2) -- (13 , -2) -- (13 , 0)
                    ;

                    \draw
                        (13 , -2) -- (19 , -2)
                        (13 , -3) -- (19 , -3)
                        (13 , -4) -- (17 , -4)
                        (13 , -2) -- (13 , -4)
                        (14 , -2) -- (14 , -4)
                        (15 , -2) -- (15 , -4)
                        (16 , -2) -- (16 , -4)
                        (17 , -2) -- (17 , -4)
                        (18 , -2) -- (18 , -3)
                        (19 , -2) -- (19 , -3)

                        (21 , 0) -- (22 , 0)
                        (21 , -1) -- (22 , -1)
                        (21 , 0) -- (21 , -1)
                        (22 , 0) -- (22 , -1)

                        (22.55 , -0.5) node {\tiny $\cdots$}
                        (21.55 , -1.5) node {\tiny $\cdots$}
                        (14.55 , -2.5) node {\tiny $\cdots$}
                        (16.55 , -2.5) node {\tiny $\cdots$}
                        (17.55 , -2.5) node {\tiny $\cdots$}
                        (14.55 , -3.5) node {\tiny $\cdots$}
                        (21.5 , -0.5) node {\footnotesize $1$}
                        (13.5 , -2.5) node {\footnotesize $1$}
                        (15.5 , -2.5) node {\footnotesize $1$}
                        (13.5 , -3.5) node {\footnotesize $2$}
                        (15.5 , -3.5) node {\footnotesize $2$}
                    ;

                    \draw (18.5 , -2.5) node {\footnotesize $2$} ;
                    \draw[blue , very thick] (18 , -2) -- (19 , -2) -- (19 , -2.95) -- (18 , -2.95) -- (18 , -2) ;
                    \draw[blue , very thick] (13 , -2) -- (18 , -2) -- (18 , -2.95) -- (15.95 , -2.95) -- (15.95 , -4) -- (13 , -4) -- (13 , -2) ;
                    \draw[red , very thick] (16.05 , -3.05) -- (17 , -3.05) -- (17 , -4) -- (16.05 , -4) -- (16.05 , -3.05) ;
                    \draw (16.5 , -3.5) node {\footnotesize $3$} ;
                \end{tikzpicture}
            \end{figure}
            
            \item \textbf{Case 3.2: } The first cell in the fourth row contains $2$, and there are fewer $2$'s in the second and third rows than the $1$'s in the first row.
            \begin{enumerate}[label=$\circ$]
                \item Then $T'$ is obtained by
                moving ove the third row left by one cell,
                moving the $2$ in the first cell of the fourth row to the last cell of the third row,
                moving the rest of the fourth row left by one cell,
                and placing the integer $3$ in the last cell of the fourth row.
                
                \item Condition (a) is clear.  For (b), 
                note that the number of $2$'s in the second and third rows is no more than the number of $1$'s in the first row by assumption.
                Also, in the reverse reading work of $T'$, the $2$ moved to the third row is in front of the added $3$.
            \end{enumerate}
            
            \begin{figure}
                \caption{The map $\LR(\alpha/\lambda , \lambda_{(2 , 3)}) \to \LR(\alpha/\lambda_{(3)} , \lambda_{(2)})$ in Case 3.3 \vspace{5pt}}
                \label{figure:kk1 case 3.3}
                \begin{tikzpicture}[scale=0.4]
                    \draw
                        (-0.7 , -0.5) node {\tiny $1$}
                        (-0.7 , -1.5) node {\tiny $2$}
                        (-0.7 , -2.5) node {\tiny $3$}
                        (-0.7 , -3.5) node {\tiny $4$}
                        (0.5 , 0.5) node {\tiny $1$}
                        (1.55 , 0.5) node {\tiny $\cdots$}
                        (2.5 , 0.5) node {\tiny $\alpha_4$}
                        (3.5 , 0.5) node {\tiny $\alpha_4 \!\! + \!\! 1$}
                        (4.55 , 0.5) node {\tiny $\cdots$}
                        (5.5 , 0.5) node {\tiny $\alpha_3$}
                        (6.55 , 0.5) node {\tiny $\cdots$}
                        (7.55 , 0.5) node {\tiny $k$}
                        (8.55 , 0.5) node {\tiny $k \!\! + \!\! 1$}
                    ;

                    \fill[gray]
                        (0 , 0) -- (8 , 0) -- (8 , -2) -- (1 , -2) -- (1 , -3) -- (0 , -3) -- (0 , 0)
                    ;

                    \draw
                        (1 , -2) -- (6 , -2)
                        (0 , -3) -- (6 , -3)
                        (0 , -4) -- (3 , -4)
                        (0 , -3) -- (0 , -4)
                        (1 , -2) -- (1 , -4)
                        (2 , -2) -- (2 , -4)
                        (3 , -2) -- (3 , -4)
                        (4 , -2) -- (4 , -3)
                        (5 , -2) -- (5 , -3)
                        (6 , -2) -- (6 , -3)

                        (8 , 0) -- (9 , 0)
                        (8 , -1) -- (9 , -1)
                        (8 , -2) -- (9 , -2)
                        (8 , 0) -- (8 , -2)
                        (9 , 0) -- (9 , -2)

                        (9.55 , -0.5) node {\tiny $\cdots$}
                        (9.55 , -1.5) node {\tiny $\cdots$}
                        (2.55 , -2.5) node {\tiny $\cdots$}
                        (4.55 , -2.5) node {\tiny $\cdots$}
                        (5.55 , -2.5) node {\tiny $\cdots$}
                        (1.55 , -3.5) node {\tiny $\cdots$}
                        (8.5 , -0.5) node {\footnotesize $1$}
                        (1.5 , -2.5) node {\footnotesize $1$}
                        (3.5 , -2.5) node {\footnotesize $1$}
                        (0.5 , -3.5) node {\footnotesize $2$}
                        (2.5 , -3.5) node {\footnotesize $2$}
                        (8.5 , -1.5) node {\footnotesize $2$}
                    ;

                    \draw[blue , very thick] (1 , -2) -- (6 , -2) -- (6 , -3) -- (1 , -3) -- (1 , -2) ;

                    \draw (11 , -2) node {$\mapsto$} ;

                    \draw
                        (12.3 , -0.5) node {\tiny $1$}
                        (12.3 , -1.5) node {\tiny $2$}
                        (12.3 , -2.5) node {\tiny $3$}
                        (12.3 , -3.5) node {\tiny $4$}
                        (13.5 , 0.5) node {\tiny $1$}
                        (14.55 , 0.5) node {\tiny $\cdots$}
                        (15.5 , 0.5) node {\tiny $\alpha_4$}
                        (16.5 , 0.5) node {\tiny $\alpha_4 \!\! + \!\! 1$}
                        (17.55 , 0.5) node {\tiny $\cdots$}
                        (18.5 , 0.5) node {\tiny $\alpha_3$}
                        (19.55 , 0.5) node {\tiny $\cdots$}
                        (20.55 , 0.5) node {\tiny $k$}
                        (21.55 , 0.5) node {\tiny $k \!\! + \!\! 1$}
                    ;

                    \fill[gray]
                        (13 , 0) -- (21 , 0) -- (21 , -2) -- (13 , -2) -- (13 , 0)
                    ;

                    \draw
                        (13 , -2) -- (19 , -2)
                        (13 , -3) -- (19 , -3)
                        (13 , -4) -- (16 , -4)
                        (13 , -2) -- (13 , -4)
                        (14 , -2) -- (14 , -4)
                        (15 , -2) -- (15 , -4)
                        (16 , -2) -- (16 , -4)
                        (17 , -2) -- (17 , -3)
                        (18 , -2) -- (18 , -3)
                        (19 , -2) -- (19 , -3)

                        (21 , 0) -- (22 , 0)
                        (21 , -1) -- (22 , -1)
                        (21 , -2) -- (22 , -2)
                        (21 , 0) -- (21 , -2)
                        (22 , 0) -- (22 , -2)

                        (22.55 , -0.5) node {\tiny $\cdots$}
                        (22.55 , -1.5) node {\tiny $\cdots$}
                        (14.55 , -2.5) node {\tiny $\cdots$}
                        (16.55 , -2.5) node {\tiny $\cdots$}
                        (17.55 , -2.5) node {\tiny $\cdots$}
                        (14.55 , -3.5) node {\tiny $\cdots$}
                        (21.5 , -0.5) node {\footnotesize $1$}
                        (21.5 , -1.5) node {\footnotesize $2$}
                        (13.5 , -2.5) node {\footnotesize $1$}
                        (15.5 , -2.5) node {\footnotesize $1$}
                        (13.5 , -3.5) node {\footnotesize $2$}
                        (15.5 , -3.5) node {\footnotesize $2$}
                    ;

                    \draw[blue , very thick] (13 , -2) -- (17.95 , -2) -- (17.95 , -3) -- (13 , -3) -- (13 , -2) ;
                    \draw[red , very thick] (18.05 , -2) -- (19 , -2) -- (19 , -3) -- (18.05 , -3) -- (18.05 , -2) ;
                    \draw (18.5 , -2.5) node {\footnotesize $3$} ;
                \end{tikzpicture}
            \end{figure}
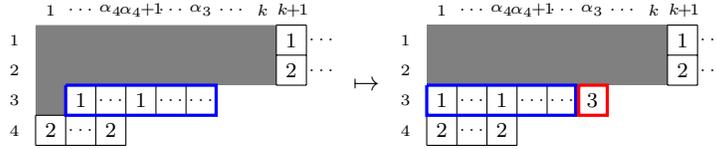

            \item \textbf{Case 3.3: }The first cell in the fourth row contains $2$, and there are as many $2$'s in the second and third rows as the $1$'s in the first row.
            \begin{enumerate}[label=$\circ$]
                \item Then $T'$ is obtained by 
                moving the third row left by one cell,
                and placing the integer $3$ in the last cell of the third row.
                
                \item For condition (a) note                the fourth row of $T$ contains only $2$'s,
                so there are $k - 1 - \alpha_4$ of $2$'s in the second and the third rows of $T$
                and the same amount of $1$'s in the first row of $T$.
                Thus, there are $k - (k - 1 - \alpha_4) = \alpha_4 + 1$ of $1$'s in the third row of $T$,
                which ensures that the $\alpha_4$-th column of $T'$ is strictly increasing.
                
                The check for condition (b) is similar to before.  As argued above, there are $\alpha_4 + 1$ of $1$'s in the third row of $T$, which implies that there are at most $(k - 1) - (\alpha_4 + 1) = k - 2 - \alpha_4$ of $2$'s in the third row of $T$.
                Therefore, there is at least one $2$ in the second row of $2$,
                and hence in the reverse reading work of $T'$,
                there is at least one $2$ in front of the added $3$.
            \end{enumerate}
        \end{enumerate}
    \end{enumerate}
    
    \begin{figure}
        \captionsetup{width=280pt}
        \caption{Two distinct $T_1 , T_2 \in \LR((4,3,3,2)/(3,3,1) , (3,2))$ mapped to the same $T' \in \LR((4,3,3,2)/(3,3) , (3,2,1))$ \vspace{5pt}}
        \label{figure:kk1 exmp}
        \[
            \begin{tikzpicture}[scale = 0.4]
                \fill[gray] (0 , 0) -- (3 , 0) -- (3 , -2) -- (1 , -2) -- (1 , -3) -- (0 , -3) -- (0 , 0) ;
                
                \draw
                    (3 , 0) -- (4 , 0)
                    (3 , -1) -- (4 , -1)
                    (1 , -2) -- (3 , -2)
                    (0 , -3) -- (3 , -3)
                    (0 , -4) -- (2 , -4)
                    (0 , -3) -- (0 , -4)
                    (1 , -2) -- (1 , -4)
                    (2 , -2) -- (2 , -4)
                    (3 , 0) -- (3 , -1)
                    (3 , -2) -- (3 , -3)
                    (4 , 0) -- (4 , -1)

                    (3.5 , -0.5) node {\footnotesize $1$}
                    (1.5 , -2.5) node {\footnotesize $1$}
                    (2.5 , -2.5) node {\footnotesize $2$}
                    (0.5 , -3.5) node {\footnotesize $1$}
                    (1.5 , -3.5) node {\footnotesize $2$}
                ;
            \end{tikzpicture}
            \quad\quad , \quad\quad
            \begin{tikzpicture}[scale = 0.4]
                \fill[gray] (0 , 0) -- (3 , 0) -- (3 , -2) -- (1 , -2) -- (1 , -3) -- (0 , -3) -- (0 , 0) ;
                
                \draw
                    (3 , 0) -- (4 , 0)
                    (3 , -1) -- (4 , -1)
                    (1 , -2) -- (3 , -2)
                    (0 , -3) -- (3 , -3)
                    (0 , -4) -- (2 , -4)
                    (0 , -3) -- (0 , -4)
                    (1 , -2) -- (1 , -4)
                    (2 , -2) -- (2 , -4)
                    (3 , 0) -- (3 , -1)
                    (3 , -2) -- (3 , -3)
                    (4 , 0) -- (4 , -1)

                    (3.5 , -0.5) node {\footnotesize $1$}
                    (1.5 , -2.5) node {\footnotesize $1$}
                    (2.5 , -2.5) node {\footnotesize $1$}
                    (0.5 , -3.5) node {\footnotesize $2$}
                    (1.5 , -3.5) node {\footnotesize $2$}
                ;
            \end{tikzpicture}
            \quad\quad \raisebox{25pt}{$\mapsto$} \quad\quad
            \begin{tikzpicture}[scale = 0.4]
                \fill[gray] (0 , 0) -- (3 , 0) -- (3 , -2) -- (0 , -2) -- (0 , 0) ;
                
                \draw
                    (3 , 0) -- (4 , 0)
                    (3 , -1) -- (4 , -1)
                    (0 , -2) -- (3 , -2)
                    (0 , -3) -- (3 , -3)
                    (0 , -4) -- (2 , -4)
                    (0 , -2) -- (0 , -4)
                    (1 , -2) -- (1 , -4)
                    (2 , -2) -- (2 , -4)
                    (3 , 0) -- (3 , -1)
                    (3 , -2) -- (3 , -3)
                    (4 , 0) -- (4 , -1)

                    (3.5 , -0.5) node {\footnotesize $1$}
                    (0.5 , -2.5) node {\footnotesize $1$}
                    (1.5 , -2.5) node {\footnotesize $1$}
                    (2.5 , -2.5) node {\footnotesize $2$}
                    (0.5 , -3.5) node {\footnotesize $2$}
                    (1.5 , -3.5) node {\footnotesize $3$}
                ;
            \end{tikzpicture}
        \]
    \end{figure}
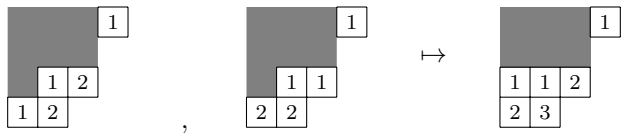
    
    Now the maps $\LR(\alpha/\lambda , \lambda_{(2,3)}) \to \LR(\alpha/\lambda_{(2)} , \lambda_{(3)})$ in the first two cases are clearly injective.    In Case 3, note that the $T'$ obtained in Case 3.3 is different from the other two subcases
    since the integer $3$ is placed in a different row.
    Thus, the maps in case 3 are at most 2-to-1.
    (In fact it is in general 2-to-1 as Figure \ref{figure:kk1 exmp} shows an example of two distinct $T$ mapped to the same $T'$.)
\end{proof}
	
\begin{proof}[Proof of Lemma \ref{lemma:kk1 special}]
    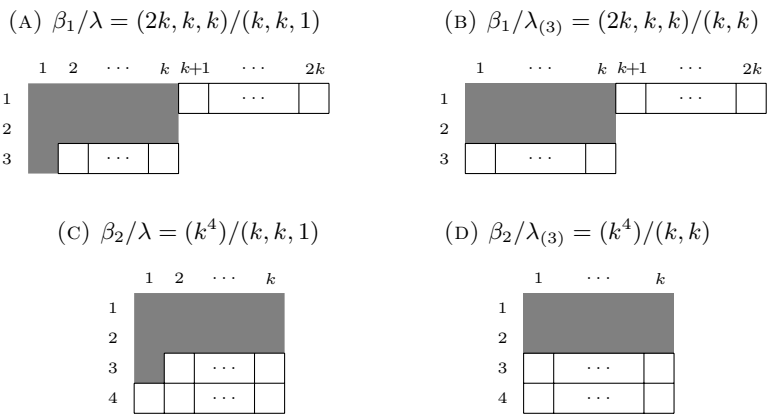
\begin{figure}
        \caption{Skew shapes $\beta_i / \lambda$ and $\beta_i / \lambda_{(3)}$}
        \label{figure:kk1 skew shapes}
        \begin{subfigure}{0.45\textwidth}
            \caption{$\beta_1 / \lambda = (2k , k , k) / (k , k , 1)$ \vspace{5pt}}
            \centering \begin{tikzpicture}[scale=0.4]
                \draw
                    (-0.7 , -0.5) node {\tiny $1$}
                    (-0.7 , -1.5) node {\tiny $2$}
                    (-0.7 , -2.5) node {\tiny $3$}
                    (0.5 , 0.5) node {\tiny $1$}
                    (1.5 , 0.5) node {\tiny $2$}
                    (3.05 , 0.5) node {\tiny $\cdots$}
                    (4.55 , 0.5) node {\tiny $k$}
                    (5.55 , 0.5) node {\tiny $k \!\! + \!\! 1$}
                    (7.55 , 0.5) node {\tiny $\cdots$}
                    (9.55 , 0.5) node {\tiny $2k$}
                ;

                \fill[gray]
                    (0 , 0) -- (5 , 0) -- (5 , -2) -- (1 , -2) -- (1 , -3) -- (0 , -3) -- (0 , 0)
                ;

                \draw
                    (5 , 0) -- (10 , 0)
                    (5 , -1) -- (10 , -1)
                    (5 , 0) -- (5 , -1)
                    (6 , 0) -- (6 , -1)
                    (9 , 0) -- (9 , -1)
                    (10 , 0) -- (10 , -1)

                    (1 , -2) -- (5 , -2)
                    (1 , -3) -- (5 , -3)
                    (1 , -2) -- (1 , -3)
                    (2 , -2) -- (2 , -3)
                    (4 , -2) -- (4 , -3)
                    (5 , -2) -- (5 , -3)

                    (3.05 , -2.5) node {\tiny $\cdots$}
                    (7.55 , -0.5) node {\tiny $\cdots$}
                ;
            \end{tikzpicture}
            \vspace{10pt}
        \end{subfigure}
        \begin{subfigure}{0.45\textwidth}
            \caption{$\beta_1 / \lambda_{(3)} = (2k , k , k) / (k , k)$ \vspace{5pt}}
            \centering \begin{tikzpicture}[scale=0.4]
                \draw
                    (-0.7 , -0.5) node {\tiny $1$}
                    (-0.7 , -1.5) node {\tiny $2$}
                    (-0.7 , -2.5) node {\tiny $3$}
                    (0.5 , 0.5) node {\tiny $1$}
                    (2.55 , 0.5) node {\tiny $\cdots$}
                    (4.55 , 0.5) node {\tiny $k$}
                    (5.55 , 0.5) node {\tiny $k \!\! + \!\! 1$}
                    (7.55 , 0.5) node {\tiny $\cdots$}
                    (9.55 , 0.5) node {\tiny $2k$}
                ;

                \fill[gray]
                    (0 , 0) -- (5 , 0) -- (5 , -2) -- (0 , -2) -- (0 , 0)
                ;

                \draw
                    (5 , 0) -- (10 , 0)
                    (5 , -1) -- (10 , -1)
                    (5 , 0) -- (5 , -1)
                    (6 , 0) -- (6 , -1)
                    (9 , 0) -- (9 , -1)
                    (10 , 0) -- (10 , -1)

                    (0 , -2) -- (5 , -2)
                    (0 , -3) -- (5 , -3)
                    (0 , -2) -- (0 , -3)
                    (1 , -2) -- (1 , -3)
                    (4 , -2) -- (4 , -3)
                    (5 , -2) -- (5 , -3)

                    (2.55 , -2.5) node {\tiny $\cdots$}
                    (7.55 , -0.5) node {\tiny $\cdots$}
                ;
            \end{tikzpicture}
            \vspace{10pt}
        \end{subfigure}
        \begin{subfigure}{0.4\textwidth}
            \caption{$\beta_2 / \lambda = (k^4) / (k , k , 1)$ \vspace{5pt}}
            \centering \begin{tikzpicture}[scale=0.4]
                \draw
                    (-0.7 , -0.5) node {\tiny $1$}
                    (-0.7 , -1.5) node {\tiny $2$}
                    (-0.7 , -2.5) node {\tiny $3$}
                    (-0.7 , -3.5) node {\tiny $4$}
                    (0.5 , 0.5) node {\tiny $1$}
                    (1.5 , 0.5) node {\tiny $2$}
                    (3.05 , 0.5) node {\tiny $\cdots$}
                    (4.55 , 0.5) node {\tiny $k$}
                ;

                \fill[gray]
                    (0 , 0) -- (5 , 0) -- (5 , -2) -- (1 , -2) -- (1 , -3) -- (0 , -3) -- (0 , 0)
                ;

                \draw
                    (1 , -2) -- (5 , -2)
                    (0 , -3) -- (5 , -3)
                    (0 , -4) -- (5 , -4)
                    (0 , -3) -- (0 , -4)
                    (1 , -2) -- (1 , -4)
                    (2 , -2) -- (2 , -4)
                    (4 , -2) -- (4 , -4)
                    (5 , -2) -- (5 , -4)

                    (3.05 , -2.5) node {\tiny $\cdots$}
                    (3.05 , -3.5) node {\tiny $\cdots$}
                ;
            \end{tikzpicture}
        \end{subfigure}
        \begin{subfigure}{0.4\textwidth}
            \caption{$\beta_2 / \lambda_{(3)} = (k^4) / (k , k)$ \vspace{5pt}}
            \centering \begin{tikzpicture}[scale=0.4]
                \draw
                    (-0.7 , -0.5) node {\tiny $1$}
                    (-0.7 , -1.5) node {\tiny $2$}
                    (-0.7 , -2.5) node {\tiny $3$}
                    (-0.7 , -3.5) node {\tiny $4$}
                    (0.5 , 0.5) node {\tiny $1$}
                    (2.55 , 0.5) node {\tiny $\cdots$}
                    (4.55 , 0.5) node {\tiny $k$}
                ;

                \fill[gray]
                    (0 , 0) -- (5 , 0) -- (5 , -2) -- (0 , -2) -- (0 , 0)
                ;

                \draw
                    (0 , -2) -- (5 , -2)
                    (0 , -3) -- (5 , -3)
                    (0 , -4) -- (5 , -4)
                    (0 , -2) -- (0 , -4)
                    (1 , -2) -- (1 , -4)
                    (4 , -2) -- (4 , -4)
                    (5 , -2) -- (5 , -4)

                    (2.55 , -2.5) node {\tiny $\cdots$}
                    (2.55 , -3.5) node {\tiny $\cdots$}
                ;
            \end{tikzpicture}
        \end{subfigure}
    \end{figure}
    
    We have $c^{\beta_i}_{\lambda_{(2)} , \lambda_{(3)}} = c^{\beta_i}_{\lambda , \lambda_{(2,\uparrow)}} = c^{\beta_i}_{\lambda , \lambda_{(2,\leftarrow)}} = 0$, for $i = 1 , 2$, by Lemma $\ref{lemma:badLRT}$(c):
    all four skew shapes $\beta_i / \lambda_{(3)} , \beta_i / \lambda$ ($i = 1 , 2$) have only two non-empty rows
    (see Figure \ref{figure:kk1 skew shapes}),
    but $\ell(\lambda_{(2)}) = \ell(\lambda_{(2,\uparrow)}) = \ell(\lambda_{(2,\leftarrow)}) = 3$,
    so $\LR(\beta_i / \lambda_{(3)} , \lambda_{(2)}) , \LR(\beta_i / \lambda , \lambda_{(2,\uparrow)}) , \LR(\beta_i / \lambda , \lambda_{(2,\leftarrow)})$, for $i = 1 , 2$, are all empty.
    
    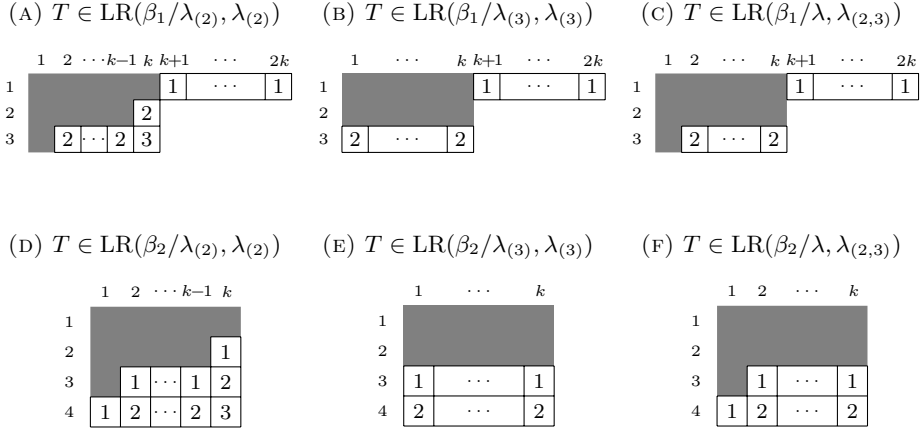
\begin{figure}
        \captionsetup{width=290pt}
        \caption{Littlewood-Richardson tableaux in $\LR(\beta_i / \lambda_{(2)} , \lambda_{(2)})$, $\LR(\beta_i / \lambda_{(3)} , \lambda_{(3)})$, $\LR(\beta_i / \lambda , \lambda_{(2 , 3)})$, $i = 1,2$}
        \label{figure:kk1 special}
        \begin{subfigure}{0.32\textwidth}
            \caption{$T \in \LR(\beta_1 / \lambda_{(2)} , \lambda_{(2)})$ \vspace{5pt}}
            \begin{tikzpicture}[scale=0.35]
                \draw
                    (-0.7 , -0.5) node {\tiny $1$}
                    (-0.7 , -1.5) node {\tiny $2$}
                    (-0.7 , -2.5) node {\tiny $3$}
                    (0.5 , 0.5) node {\tiny $1$}
                    (1.5 , 0.5) node {\tiny $2$}
                    (2.55 , 0.5) node {\tiny $\cdots$}
                    (3.55 , 0.5) node {\tiny $k \!\! - \!\! 1$}
                    (4.55 , 0.5) node {\tiny $k$}
                    (5.55 , 0.5) node {\tiny $k \!\! + \!\! 1$}
                    (7.55 , 0.5) node {\tiny $\cdots$}
                    (9.55 , 0.5) node {\tiny $2k$}
                ;

                \fill[gray]
                    (0 , 0) -- (5 , 0) -- (5 , -1) -- (4 , -1) -- (4 , -2) -- (1 , -2) -- (1 , -3) -- (0 , -3) -- (0 , 0)
                ;

                \draw
                    (5 , 0) -- (10 , 0)
                    (5 , -1) -- (10 , -1)
                    (5 , 0) -- (5 , -1)
                    (6 , 0) -- (6 , -1)
                    (9 , 0) -- (9 , -1)
                    (10 , 0) -- (10 , -1)

                    (4 , -1) -- (5 , -1)
                    (1 , -2) -- (5 , -2)
                    (1 , -3) -- (5 , -3)
                    (1 , -2) -- (1 , -3)
                    (2 , -2) -- (2 , -3)
                    (3 , -2) -- (3 , -3)
                    (4 , -1) -- (4 , -3)
                    (5 , -1) -- (5 , -3)

                    (2.55 , -2.5) node {\tiny $\cdots$}
                    (7.55 , -0.5) node {\tiny $\cdots$}
                    (1.5 , -2.5) node {\footnotesize $2$}
                    (3.5 , -2.5) node {\footnotesize $2$}
                    (4.5 , -2.5) node {\footnotesize $3$}
                    (4.5 , -1.5) node {\footnotesize $2$}
                    (5.5 , -0.5) node {\footnotesize $1$}
                    (9.5 , -0.5) node {\footnotesize $1$}
                ;
            \end{tikzpicture}
            \vspace{10pt}
        \end{subfigure}
        \begin{subfigure}{0.32\textwidth}
            \caption{$T \in \LR(\beta_1 / \lambda_{(3)} , \lambda_{(3)})$ \vspace{5pt}}
            \begin{tikzpicture}[scale=0.35]
                \draw
                    (-0.7 , -0.5) node {\tiny $1$}
                    (-0.7 , -1.5) node {\tiny $2$}
                    (-0.7 , -2.5) node {\tiny $3$}
                    (0.5 , 0.5) node {\tiny $1$}
                    (2.55 , 0.5) node {\tiny $\cdots$}
                    (4.55 , 0.5) node {\tiny $k$}
                    (5.55 , 0.5) node {\tiny $k \!\! + \!\! 1$}
                    (7.55 , 0.5) node {\tiny $\cdots$}
                    (9.55 , 0.5) node {\tiny $2k$}
                ;

                \fill[gray]
                    (0 , 0) -- (5 , 0) -- (5 , -2) -- (0 , -2) -- (0 , 0)
                ;

                \draw
                    (5 , 0) -- (10 , 0)
                    (5 , -1) -- (10 , -1)
                    (5 , 0) -- (5 , -1)
                    (6 , 0) -- (6 , -1)
                    (9 , 0) -- (9 , -1)
                    (10 , 0) -- (10 , -1)

                    (0 , -2) -- (5 , -2)
                    (0 , -3) -- (5 , -3)
                    (0 , -2) -- (0 , -3)
                    (1 , -2) -- (1 , -3)
                    (4 , -2) -- (4 , -3)
                    (5 , -2) -- (5 , -3)

                    (2.55 , -2.5) node {\tiny $\cdots$}
                    (7.55 , -0.5) node {\tiny $\cdots$}
                    (0.5 , -2.5) node {\footnotesize $2$}
                    (4.5 , -2.5) node {\footnotesize $2$}
                    (5.5 , -0.5) node {\footnotesize $1$}
                    (9.5 , -0.5) node {\footnotesize $1$}
                ;
            \end{tikzpicture}
            \vspace{10pt}
        \end{subfigure}
        \begin{subfigure}{0.32\textwidth}
            \caption{$T \in \LR(\beta_1 / \lambda , \lambda_{(2 , 3)})$ \vspace{5pt}}
            \begin{tikzpicture}[scale=0.35]
                \draw
                    (-0.7 , -0.5) node {\tiny $1$}
                    (-0.7 , -1.5) node {\tiny $2$}
                    (-0.7 , -2.5) node {\tiny $3$}
                    (0.5 , 0.5) node {\tiny $1$}
                    (1.5 , 0.5) node {\tiny $2$}
                    (3.05 , 0.5) node {\tiny $\cdots$}
                    (4.55 , 0.5) node {\tiny $k$}
                    (5.55 , 0.5) node {\tiny $k \!\! + \!\! 1$}
                    (7.55 , 0.5) node {\tiny $\cdots$}
                    (9.55 , 0.5) node {\tiny $2k$}
                ;

                \fill[gray]
                    (0 , 0) -- (5 , 0) -- (5 , -2) -- (1 , -2) -- (1 , -3) -- (0 , -3) -- (0 , 0)
                ;

                \draw
                    (5 , 0) -- (10 , 0)
                    (5 , -1) -- (10 , -1)
                    (5 , 0) -- (5 , -1)
                    (6 , 0) -- (6 , -1)
                    (9 , 0) -- (9 , -1)
                    (10 , 0) -- (10 , -1)

                    (1 , -2) -- (5 , -2)
                    (1 , -3) -- (5 , -3)
                    (1 , -2) -- (1 , -3)
                    (2 , -2) -- (2 , -3)
                    (4 , -2) -- (4 , -3)
                    (5 , -2) -- (5 , -3)

                    (3.05 , -2.5) node {\tiny $\cdots$}
                    (7.55 , -0.5) node {\tiny $\cdots$}
                    (1.5 , -2.5) node {\footnotesize $2$}
                    (4.5 , -2.5) node {\footnotesize $2$}
                    (5.5 , -0.5) node {\footnotesize $1$}
                    (9.5 , -0.5) node {\footnotesize $1$}
                ;
            \end{tikzpicture}
            \vspace{10pt}
        \end{subfigure}
        \begin{subfigure}{0.32\textwidth}
            \caption{$T \in \LR(\beta_2 / \lambda_{(2)} , \lambda_{(2)})$ \vspace{5pt}}
            \centering \begin{tikzpicture}[scale=0.4]
                \draw
                    (-0.7 , -0.5) node {\tiny $1$}
                    (-0.7 , -1.5) node {\tiny $2$}
                    (-0.7 , -2.5) node {\tiny $3$}
                    (-0.7 , -3.5) node {\tiny $4$}
                    (0.5 , 0.5) node {\tiny $1$}
                    (1.5 , 0.5) node {\tiny $2$}
                    (2.55 , 0.5) node {\tiny $\cdots$}
                    (3.55 , 0.5) node {\tiny $k \!\! - \!\! 1$}
                    (4.55 , 0.5) node {\tiny $k$}
                ;
                
                \fill[gray]
                    (0 , 0) -- (5 , 0) -- (5 , -1) -- (4 , -1) -- (4 , -2) -- (1 , -2) -- (1 , -3) -- (0 , -3) -- (0 , 0)
                ;

                \draw
                    (4 , -1) -- (5 , -1)
                    (1 , -2) -- (5 , -2)
                    (0 , -3) -- (5 , -3)
                    (0 , -4) -- (5 , -4)
                    (0 , -3) -- (0 , -4)
                    (1 , -2) -- (1 , -4)
                    (2 , -2) -- (2 , -4)
                    (3 , -2) -- (3 , -4)
                    (4 , -1) -- (4 , -4)
                    (5 , -1) -- (5 , -4)

                    (2.55 , -2.5) node {\tiny $\cdots$}
                    (2.55 , -3.5) node {\tiny $\cdots$}
                    (1.5 , -2.5) node {\footnotesize $1$}
                    (3.5 , -2.5) node {\footnotesize $1$}
                    (0.5 , -3.5) node {\footnotesize $1$}
                    (1.5 , -3.5) node {\footnotesize $2$}
                    (3.5 , -3.5) node {\footnotesize $2$}
                    (4.5 , -1.5) node {\footnotesize $1$}
                    (4.5 , -2.5) node {\footnotesize $2$}
                    (4.5 , -3.5) node {\footnotesize $3$}
                ;
            \end{tikzpicture}
        \end{subfigure}
        \begin{subfigure}{0.32\textwidth}
            \caption{$T \in \LR(\beta_2 / \lambda_{(3)} , \lambda_{(3)})$ \vspace{5pt}}
            \centering \begin{tikzpicture}[scale=0.4]
                \draw
                    (-0.7 , -0.5) node {\tiny $1$}
                    (-0.7 , -1.5) node {\tiny $2$}
                    (-0.7 , -2.5) node {\tiny $3$}
                    (-0.7 , -3.5) node {\tiny $4$}
                    (0.5 , 0.5) node {\tiny $1$}
                    (2.55 , 0.5) node {\tiny $\cdots$}
                    (4.55 , 0.5) node {\tiny $k$}
                ;

                \fill[gray]
                    (0 , 0) -- (5 , 0) -- (5 , -2) -- (0 , -2) -- (0 , 0)
                ;

                \draw
                    (0 , -2) -- (5 , -2)
                    (0 , -3) -- (5 , -3)
                    (0 , -4) -- (5 , -4)
                    (0 , -2) -- (0 , -4)
                    (1 , -2) -- (1 , -4)
                    (4 , -2) -- (4 , -4)
                    (5 , -2) -- (5 , -4)

                    (2.55 , -2.5) node {\tiny $\cdots$}
                    (2.55 , -3.5) node {\tiny $\cdots$}
                    (0.5 , -2.5) node {\footnotesize $1$}
                    (4.5 , -2.5) node {\footnotesize $1$}
                    (0.5 , -3.5) node {\footnotesize $2$}
                    (4.5 , -3.5) node {\footnotesize $2$}
                ;
            \end{tikzpicture}
        \end{subfigure}
        \begin{subfigure}{0.32\textwidth}
            \caption{$T \in \LR(\beta_2 / \lambda , \lambda_{(2 , 3)})$ \vspace{5pt}}
            \centering \begin{tikzpicture}[scale=0.4]
                \draw
                    (-0.7 , -0.5) node {\tiny $1$}
                    (-0.7 , -1.5) node {\tiny $2$}
                    (-0.7 , -2.5) node {\tiny $3$}
                    (-0.7 , -3.5) node {\tiny $4$}
                    (0.5 , 0.5) node {\tiny $1$}
                    (1.5 , 0.5) node {\tiny $2$}
                    (3.05 , 0.5) node {\tiny $\cdots$}
                    (4.55 , 0.5) node {\tiny $k$}
                ;

                \fill[gray]
                    (0 , 0) -- (5 , 0) -- (5 , -2) -- (1 , -2) -- (1 , -3) -- (0 , -3) -- (0 , 0)
                ;

                \draw
                    (1 , -2) -- (5 , -2)
                    (0 , -3) -- (5 , -3)
                    (0 , -4) -- (5 , -4)
                    (0 , -3) -- (0 , -4)
                    (1 , -2) -- (1 , -4)
                    (2 , -2) -- (2 , -4)
                    (4 , -2) -- (4 , -4)
                    (5 , -2) -- (5 , -4)

                    (3.05 , -2.5) node {\tiny $\cdots$}
                    (3.05 , -3.5) node {\tiny $\cdots$}
                    (1.5 , -2.5) node {\footnotesize $1$}
                    (4.5 , -2.5) node {\footnotesize $1$}
                    (0.5 , -3.5) node {\footnotesize $1$}
                    (1.5 , -3.5) node {\footnotesize $2$}
                    (4.5 , -3.5) node {\footnotesize $2$}
                ;
            \end{tikzpicture}
        \end{subfigure}
    \end{figure}
    
    One may check that each of
    $\LR(\beta_i / \lambda_{(2)} , \lambda_{(2)}) ,
    \LR(\beta_i / \lambda_{(3)} , \lambda_{(3)}) ,
    \LR(\beta_i / \lambda , \lambda_{(2,3)})$
    ($i = 1 , 2$)
    is a singleton,
    the only element of which is depicted in Figure \ref{figure:kk1 special}.
    Therefore, we have
    $c^{\beta_i}_{\lambda_{(2)} , \lambda_{(2)}} =
    c^{\beta_i}_{\lambda_{(3)} , \lambda_{(3)}} =
    c^{\beta_i}_{\lambda , \lambda_{(2,3)}} = 1$
    for $i = 1 , 2$.
\end{proof}
\section{Conjugate partitions and partitions of the form $(3 , 2^{k - 1})$}

The conjugate partition of a partition $\lambda$,
denoted by $\lambda'$,
is defined to be the partition corresponding to the transpose of the Young diagram of $\lambda$.
The Littlewood-Richardson coefficients are preserved under conjugation
in the sense that $c^{\lambda'}_{\mu' \nu'} = c^{\lambda}_{\mu \nu}$
for all partitions $\lambda , \mu , \nu$.
(See, for example, \cite[Corollary 2, p.\ 62]{Ful97} for a combinatorial proof.)
The corners are also preserved under conjugation,
and we have
$J(\lambda') = \{ \lambda_j : j \in J(\lambda) \}$.
It is also easy to see that
$(\lambda_{(j)})' = \lambda'_{(\lambda_j)}$,
$(\lambda_{(j , k)})' = \lambda'_{(\lambda_j , \lambda_k)}$,
and that either $(\lambda_{(j , \uparrow)})' = \lambda'_{(\lambda_j , \leftarrow)}$
or neither partition exists.

Fix $k \geq 3$.
Now we consider the partition $\lambda' = (3 , 2^{k - 1})$,
which is the conjugates of the partition $\lambda = (k , k , 1)$
considered in the previous section.
The following lemmas follow immediately from Lemmas \ref{lemma:kk1 type 1} and \ref{lemma:kk1 special}.

\begin{lemma} \label{lemma:conj type 1}
    Let $\alpha'$ be a partition of $2|\lambda'| - 2 = 4k$,
    containing $\lambda'$ and not equal to $(2k , k , k)' = (3^k , 1^k)$ or $(k^4)' = (4^k)$.
    Then we have $2 c^{\alpha'}_{\lambda'_{(1)} , \lambda'_{(k)}} \geq c^{\alpha'}_{\lambda' , \lambda'_{(1,k)}}$.
\end{lemma}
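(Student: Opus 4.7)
The plan is to deduce this lemma directly from Lemma \ref{lemma:kk1 type 1} by exploiting the conjugation symmetry of Littlewood--Richardson coefficients, namely $c^{\tau'}_{\mu'\nu'} = c^{\tau}_{\mu\nu}$, which is recalled at the start of the section. Conjugation is an involution on partitions and respects the containment relation, so writing $\alpha = (\alpha')'$ gives a bijection between partitions $\alpha'$ of $4k$ containing $\lambda' = (3, 2^{k-1})$ and partitions $\alpha$ of $4k$ containing $\lambda = (k, k, 1)$. Moreover, $\alpha' = (3^k, 1^k)$ iff $\alpha = (2k, k, k)$, and $\alpha' = (4^k)$ iff $\alpha = (k^4)$, so the excluded cases correspond exactly.

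First I would identify the relevant conjugate partitions using the formulas $(\lambda_{(j)})' = \lambda'_{(\lambda_j)}$ and $(\lambda_{(j,k)})' = \lambda'_{(\lambda_j, \lambda_k)}$ recorded in the section. For $\lambda = (k, k, 1)$ we have $J(\lambda) = \{2, 3\}$ with $\lambda_2 = k$ and $\lambda_3 = 1$, so $J(\lambda') = \{1, k\}$ and
\[
    (\lambda_{(3)})' = \lambda'_{(1)}, \qquad (\lambda_{(2)})' = \lambda'_{(k)}, \qquad (\lambda_{(2,3)})' = \lambda'_{(1,k)}.
\]
(One may verify directly: $\lambda_{(3)} = (k, k)$ conjugates to $(2^k) = \lambda'_{(1)}$, $\lambda_{(2)} = (k, k-1, 1)$ conjugates to $(3, 2^{k-2}, 1) = \lambda'_{(k)}$, and $\lambda_{(2,3)} = (k, k-1)$ conjugates to $(2^{k-1}, 1) = \lambda'_{(1,k)}$.)

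Applying the conjugation identity for Littlewood--Richardson coefficients and the symmetry $c^{\alpha}_{\mu\nu} = c^{\alpha}_{\nu\mu}$, I obtain
\[
    c^{\alpha'}_{\lambda'_{(1)}, \lambda'_{(k)}} = c^{\alpha}_{\lambda_{(3)}, \lambda_{(2)}} = c^{\alpha}_{\lambda_{(2)}, \lambda_{(3)}}, \qquad c^{\alpha'}_{\lambda', \lambda'_{(1,k)}} = c^{\alpha}_{\lambda, \lambda_{(2,3)}}.
\]
The desired inequality $2 c^{\alpha'}_{\lambda'_{(1)}, \lambda'_{(k)}} \geq c^{\alpha'}_{\lambda', \lambda'_{(1,k)}}$ then becomes $2 c^{\alpha}_{\lambda_{(2)}, \lambda_{(3)}} \geq c^{\alpha}_{\lambda, \lambda_{(2,3)}}$, which is exactly the conclusion of Lemma \ref{lemma:kk1 type 1} applied to $\alpha$ (which is indeed a partition of $4k$ containing $\lambda$ and, by the observation above, not equal to $(2k,k,k)$ or $(k^4)$).

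Since there is no substantive combinatorial content beyond bookkeeping, I do not anticipate any genuine obstacle; the only thing requiring care is verifying the three identifications of conjugate partitions, which are straightforward calculations using the explicit formulas for conjugation. An analogous lemma handling the exceptional shapes $\alpha' \in \{(3^k, 1^k), (4^k)\}$ will presumably be obtained by conjugating Lemma \ref{lemma:kk1 special} in the same manner, and together these should yield Theorem \ref{theorem:conjugate} by the same arithmetic used in the proof of Theorem \ref{theorem:kk1}.
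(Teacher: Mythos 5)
Your proposal is correct and coincides with the paper's own argument: the paper likewise deduces this lemma immediately from Lemma \ref{lemma:kk1 type 1} via the conjugation symmetry $c^{\tau'}_{\mu'\nu'} = c^{\tau}_{\mu\nu}$ together with the identifications $(\lambda_{(j)})' = \lambda'_{(\lambda_j)}$ and $(\lambda_{(j,k)})' = \lambda'_{(\lambda_j,\lambda_k)}$ recorded at the start of that section. Your explicit verifications of the conjugate shapes and of the correspondence of the excluded cases match the paper's bookkeeping.
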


\begin{lemma} \label{lemma:conj special}
    Let $\beta'_1 = (2k , k , k)' = (3^k , 1^k)$ and $\beta'_2 = (k^4)' = (4^k)$.
    Then we have, for $i = 1 , 2$,
    \[
        \Big(
        c^{\beta'_i}_{\lambda'_{(1)} , \lambda'_{(k)}} ,
        c^{\beta'_i}_{\lambda'_{(k)} , \lambda'_{(k)}} ,
        c^{\beta'_i}_{\lambda'_{(1)} , \lambda'_{(1)}} ,
        c^{\beta'_i}_{\lambda' , \lambda'_{(1,k)}} ,
        c^{\beta'_i}_{\lambda' , \lambda'_{(k,\uparrow)}} ,
        c^{\beta'_i}_{\lambda' , \lambda'_{(k,\leftarrow)}}
        \Big)
        =
        (0 , 1 , 1 , 1 , 0 , 0) \, .
    \]
\end{lemma}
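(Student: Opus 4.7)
The plan is to reduce Lemma \ref{lemma:conj special} to Lemma \ref{lemma:kk1 special} by conjugating every partition involved and invoking the identity $c^{\lambda'}_{\mu'\nu'} = c^{\lambda}_{\mu\nu}$ recorded at the start of this section. Since $\lambda' = (3, 2^{k-1})$ has $J(\lambda') = \{1, k\}$, I would first tabulate the conjugates of each partition appearing in the statement and verify that they match the corresponding partitions from the $(k,k,1)$ case. Explicitly, I expect to compute
\[
    (\lambda'_{(1)})' = (2^k)' = (k,k) = \lambda_{(3)}, \qquad (\lambda'_{(k)})' = (3, 2^{k-2}, 1)' = (k, k-1, 1) = \lambda_{(2)},
\]
and for the partitions of $|\lambda'| - 2$ contained in $\lambda'$,
\[
    (\lambda'_{(1,k)})' = (2^{k-1}, 1)' = (k, k-1) = \lambda_{(2,3)},
\]
together with the swap $(\lambda'_{(k,\uparrow)})' = \lambda_{(2,\leftarrow)}$ and $(\lambda'_{(k,\leftarrow)})' = \lambda_{(2,\uparrow)}$ (which is the general fact already noted). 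Finally $(\beta'_1)' = (3^k,1^k)' = (2k,k,k) = \beta_1$ and $(\beta'_2)' = (4^k)' = (k^4) = \beta_2$.

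Once this dictionary is in place, each of the six coefficients in Lemma \ref{lemma:conj special} equals its conjugate counterpart in Lemma \ref{lemma:kk1 special}: for instance
\[
    c^{\beta'_i}_{\lambda'_{(1)}, \lambda'_{(k)}} = c^{\beta_i}_{\lambda_{(3)}, \lambda_{(2)}} = c^{\beta_i}_{\lambda_{(2)}, \lambda_{(3)}} = 0,
\]
using the symmetry $c^{\lambda}_{\mu\nu} = c^{\lambda}_{\nu\mu}$ in the last equality, and analogously for the remaining five entries. The swap between the $(k,\uparrow)$ and $(k,\leftarrow)$ partitions under conjugation matches perfectly the zeros at the two type II / type III positions in the target tuple $(0,1,1,1,0,0)$.

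There is no real obstacle here; the only thing to watch carefully is the bookkeeping of conjugation in the last two entries, where a type II partition for $\lambda'$ corresponds to a type III partition for $\lambda$ and vice versa. Beyond checking that this swap aligns with the zero entries in $(0,1,1,1,0,0)$, the lemma follows immediately from Lemma \ref{lemma:kk1 special}. Lemma \ref{lemma:conj type 1} is then handled by precisely the same dictionary applied to the inequality $2c^{\alpha}_{\lambda_{(2)},\lambda_{(3)}} \geq c^{\alpha}_{\lambda,\lambda_{(2,3)}}$, noting that $\alpha$ is excluded from being $(2k,k,k)$ or $(k^4)$ if and only if $\alpha'$ is excluded from being $(3^k,1^k)$ or $(4^k)$.
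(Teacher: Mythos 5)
Your proposal is correct and is exactly the paper's argument: Lemma \ref{lemma:conj special} is deduced from Lemma \ref{lemma:kk1 special} by conjugating all partitions and using $c^{\lambda'}_{\mu'\nu'}=c^{\lambda}_{\mu\nu}$, with the same dictionary (including the $\uparrow\leftrightarrow\leftarrow$ swap, which is harmless since both corresponding entries are $0$). Nothing is missing.
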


As a result,
we obtain the following.

\begin{theorem} \label{theorem:conjugate}
    Fix a positive integers $k \geq 3$,
    and let $\lambda' = (3 , 2^{k - 1})$.
    Then $(s_{\lambda'}^{(1)})^2 - s_{\lambda'}s_{\lambda'}^{(2)}$ is Schur positive.
\end{theorem}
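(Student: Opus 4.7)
The plan is to follow essentially verbatim the argument used for Theorem \ref{theorem:kk1}, exploiting the conjugation identities for Littlewood-Richardson coefficients and partition operations recorded at the beginning of this section. Since Lemmas \ref{lemma:conj type 1} and \ref{lemma:conj special} are already stated and follow from Lemmas \ref{lemma:kk1 type 1} and \ref{lemma:kk1 special} by conjugating all partitions (using $c^{\alpha'}_{\mu'\nu'} = c^{\alpha}_{\mu\nu}$ together with $(\lambda_{(j)})' = \lambda'_{(\lambda_j)}$, $(\lambda_{(j,k)})' = \lambda'_{(\lambda_j , \lambda_k)}$, and the $\uparrow \leftrightarrow \leftarrow$ swap), the remaining work is a direct application of equation (\ref{equation:main}) to $\lambda'$.

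First I would expand $(s_{\lambda'}^{(1)})^2 - s_{\lambda'} s_{\lambda'}^{(2)}$ via (\ref{equation:main}). By Lemma \ref{lemma:type 2&3}, the Type II and Type III contributions are Schur positive. By Lemma \ref{lemma:conj type 1}, the Type I contribution has a non-negative coefficient for every basis element $s_\alpha$ except possibly when $\alpha = \beta'_1 = (3^k , 1^k)$ or $\alpha = \beta'_2 = (4^k)$. Hence these two are the only candidates for negative coefficients in the final Schur expansion.

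For these two exceptional partitions I would compute the coefficient directly using Lemma \ref{lemma:conj special}. Noting that $J(\lambda') = \{1 , k\}$ with $\lambda'_1 = 3$ and $\lambda'_k = 2$, and that $\lambda'_{(1,\uparrow)}$ and $\lambda'_{(1,\leftarrow)}$ do not exist (so the corresponding Schur terms vanish), the coefficient of $s_{\beta'_i}$ in $(s_{\lambda'}^{(1)})^2 - s_{\lambda'} s_{\lambda'}^{(2)}$ works out, after grouping the five surviving terms from (\ref{equation:main}), to
\[
    -(n+2)(n+2-k) + (n+2)^2 + (n+2-k)^2 = (n+2)k + (n+2-k)^2,
\]
which is strictly positive. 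This completes the proof.

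The main (and essentially only) obstacle is the bookkeeping: one has to track carefully how the $\uparrow$ and $\leftarrow$ directions swap under conjugation, and verify that the non-existent partitions $\lambda'_{(1,\uparrow)}$ and $\lambda'_{(1,\leftarrow)}$ on the conjugate side correspond to the non-existent $\lambda_{(3,\uparrow)}$ and $\lambda_{(3,\leftarrow)}$ on the $(k,k,1)$ side, so that the coefficient tables in the two versions of the ``special'' lemma really do match. Once this bookkeeping is in place, both the Schur expansion and the final positivity calculation are structurally identical to those in the proof of Theorem \ref{theorem:kk1}, with $(n+k-2, n-2)$ merely replaced by $(n+2, n+2-k)$.
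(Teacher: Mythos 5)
Your proposal is correct and matches the paper's own proof essentially verbatim: both use Lemma \ref{lemma:conj type 1} with equation (\ref{equation:main}) to reduce to the two exceptional partitions $(3^k,1^k)$ and $(4^k)$, and then Lemma \ref{lemma:conj special} to compute their coefficient as $-(n+2)(n+2-k)+(n+2)^2+(n+2-k)^2=(n+2)k+(n+2-k)^2>0$. Your bookkeeping of the conjugation (the $\uparrow\leftrightarrow\leftarrow$ swap and the non-existent $\lambda'_{(1,\uparrow)}$, $\lambda'_{(1,\leftarrow)}$ corresponding to $\lambda_{(3,\leftarrow)}$, $\lambda_{(3,\uparrow)}$) is also accurate.
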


\begin{proof}
    By Lemma \ref{lemma:conj type 1} and equation (\ref{equation:main}),
    the only possibly negative coefficients of $(s_{\lambda'}^{(1)})^2 - s_{\lambda'}s_{\lambda'}^{(2)}$ are those of $s_{(3^k , 1^k)}$ and $s_{(4^k)}$.
    However, by Lemma \ref{lemma:conj special} and equation (\ref{equation:main}),
    both these coefficients are equal to
    \begin{align*}
        &
            - (n + 2)(n + 2 - k)
            + \frac{1}{2}(n + 2)(n + 3)
            + \frac{1}{2}(n + 2 - k)(n + 3 - k)
        \\
        &
            + \frac{1}{2}(n + 2)(n + 1)
            + \frac{1}{2}(n + 2 - k)(n + 1 - k)
        \\
            = \,
        &
            - (n + 2)(n + 2 - k)
            + (n + 2)^2
            + (n + 2 - k)^2
        \\
            = \,
        &
            (n + 2)k
            + (n + 2 - k)^2 \, ,
    \end{align*}
    which is positive.
\end{proof}

\bibliographystyle{plain}
\bibliography{ref}

\end{document}